\newcommand{\bb}{\mathbf{B}}
\newcommand{\catA}[1]{{\mathfrak A}}
\newcommand{\catI}[1]{{\mathfrak I}}
\newcommand{\catS}[1]{{\mathfrak S}}
\newcommand{\cpx}{\mathbb{C}}
\newcommand{\im}{{\rm Im}~}
\newcommand{\oh}{{\mathcal O}}
\newcommand{\p}[1]{{\mathbb{P}^{#1}}}
\DeclareMathOperator{\B}{B}
\DeclareMathOperator{\codim}{{codim}}
\DeclareMathOperator{\End}{End}
\DeclareMathOperator{\ext}{Ext}
\DeclareMathOperator{\Hom}{Hom}
\DeclareMathOperator{\ho}{H}
\DeclareMathOperator{\coker}{coker}
\newtheorem{theorem}{Theorem}[section]
\newtheorem{proposition}[theorem]{Proposition}
\newtheorem{lemma}[theorem]{Lemma}
\newtheorem{corollary}[theorem]{Corollary}
\newtheorem{remark}[theorem]{Remark}
\newtheorem{example}[theorem]{Example}
\newtheorem{definition}[theorem]{{\bf Definition}}
\newtheorem*{conjecture}{{\bf Conjecture}}
\begin{document}

\title{On the fixed locus of framed instanton sheaves on $\mathbb{P}^{3}$ }

\author{Abdelmoubine Amar Henni}

\date{}

\maketitle

\vspace{1cm}

\begin{abstract}
Let $\mathbb{T}$ be the three dimensional torus acting on $\mathbb{P}^{3}$ and $\mathcal{M}^{\mathbb{T}}_{\mathbb{P}^{3}}(c)$ be the fixed locus of the corresponding action on the moduli space of rank $2$ framed instanton sheaves on $\mathbb{P}^{3}.$ In this work, we prove that $\mathcal{M}^{\mathbb{T}}_{\mathbb{P}^{3}}(c)$ consist only of non locally-free instanton sheaves whose double dual is the trivial bundle $\mathcal{O}_{\p3}^{\oplus 2}$. Moreover, we relate these instantons to Pandharipande-Thomas stable pairs and give a classification of their support. This allows to compute a lower bound on the number of components of $\mathcal{M}^{\mathbb{T}}_{\mathbb{P}^{3}}(c).$

\end{abstract}

\vspace{1cm}

\tableofcontents

\vspace{1cm}

\section{Introduction}
\label{IN}

\emph{Framed instanton sheaves} have been the subject of study for more than four decades and by many authors of different backgrounds. One of the main reasons, is that they reflect a deep connection between algebraic geometry and mathematical physics; in the late 70's, Atiyah, Drinfeld, Hitchin and Manin fully classified the Yang-Mills anti-self-dual solutions, known as \emph{instantons} \cite{BPST,DK,ADHM}, on the four sphere $S^{4}.$ The classification was given, first, by relating instantons with certain holomorphic bundles on the projective space $\mathbb{P}^{3},$ over $\mathbb{C},$ by means of Penrose-Ward correspondence. Then by using Horrocks \emph{monads} \cite{Horr}, introduced in the late $60$'s, the authors got linear algebraic data, called the \emph{ADHM} data. Donaldson, then, discovered that \emph{framed } instantons on the four sphere $S^4$ correspond to some \emph{framed} holomorphic bundles on the projective plane $\mathbb{P}^{2}$ \cite{Don}. Moreover, during the $90$'s Nakajima considered framed sheaves in order to provide a compactification \cite{Nak1,Nak}, of the moduli space of framed instanton bundles on surfaces. This led to the computation of many invariants \cite{Nak, Nak2}, such as Betti numbers and Euler characteristic of these moduli spaces, on one hand, and a connection to representation theory by means of Quiver varieties \cite{Nak3} and the infinite Heisenberg Algebra \cite{Nak, Bara}, on the other hand. It is worth to mention that the rank $1$ case gives an explicit description of the Hilbert scheme of points on $\mathbb{C}^{2}$ in terms of ADHM data, and is a basic model for the computations in the higher rank cases \cite{Bara, Bruzzo-Pog}

On $\mathbb{P}^{3},$ the particular rank $2$ instanton bundles corresponds to the $SU(2)$ gauge theoretic instantons on the four sphere $S^{4}.$ Their moduli space have been studied for decades and some of its properties remained illusive for a long time. For instance, its irreducibility has been proved just few years ago, by Tikhomirov \cite{Tikho1,Tikho2}. Also, not long ago, its smoothness was showed by Jardim and Verbitsky \cite{JV}. Recently, there have been some interest in its compactification by using torsion-free sheaves. \cite{JMT1,JMT2,JG}. 

In this work, we are interested in the moduli space of rank $2$ framed instanton sheaves $\mathcal{M}_{\mathbb{P}^{3}}(c)$, on the three dimensional projective space $\mathbb{P}^{3}.$ More precisely we study its fixed locus $\mathcal{M}^{\mathbb{T}}_{\mathbb{P}^{3}}(c)$ with respect to the torus action inherited by the natural one on $\mathbb{P}^{3}.$ We shall see that every fixed torsion-free instanton sheaf $E$ is an extension (non trivial in general) of ideal sheaves $\mathcal{I}_{\mathcal{C}}$ and $\mathcal{I}_{\mathcal{Z}},$ where $\mathcal{I}_{\mathcal{C}}$ is the ideal sheaf of a non-reduced Cohen Macaulay curve $\mathcal{C},$ whose underlying reduced support is the line $l_{0}=Z(z_{2}=z_{3}=0),$ i. e., the unique line that is fixed by the action of $\mathbb{T}$ and does not intersect the framing line $l_{\infty}$ at infinity, and $\mathcal{I}_{\mathcal{Z}}$ is the ideal sheaf of points supported on $p_{0}=[1;0;0;0]$ or $/$ and $p_{1}=[0;1;0;0],$ in $l_{0}\subset\p3.$ Moreover, using the fact that the double dual of such $E$ is the trivial bundle $\mathcal{O}^{2}_{\mathbb{P}^{3}},$ we also show that every corresponding quotient $\mathcal{Q}:=\mathcal{O}^{2}_{\mathbb{P}^{3}}/E$ is a pure sheaf of dimension $1$ on the curve $\mathcal{C}.$ These quotient sheaves $\mathcal{Q}$ are special cases of {\em rank $0$ instanton sheaves} \cite[\S6.1]{HL}. A similar phenomenon, that occurs on $\p2,$ is the fact that the fixed points in $\mathcal{M}_{\p2}(r,c),$ under the toric action inherited from the one on $\p2,$ split as the sum of ideal sheaves of points, all with the same topological support given by the origin $\lbrack 0;0;1\rbrack$ \cite[\S3]{Nak2, Bruzzo-Pog}. The difference is that the set of fixed points, in the $\p3$ case, might not be isolated in general, in other words, there might be continuous families of them.

\vspace{0.5cm}

This paper is organized as follows; in Section \ref{ADHM}, we recall the notion of \emph{ADHM data} and their stabilities on $\mathbb{P}^{3}$ and how it relates to framed instanton sheaves through Horrocks monads. In section \ref{T-action} we briefly describe the inherited action, of the three dimensional torus $\mathbb{T},$ on the ADHM data. In particular, we show that, for non vanishing second Chern class, the fixed framed instantons are strictly torsion-free sheaves, that their double dual is trivial and that their singularity locus is pure, of dimension $1.$ A different proof can be found in \cite{JG} for non fixed instantons.

\vspace{0.3cm}

In Section \ref{PT-pairs}, we move on to give a relation of these fixed instanton sheaves with Pandharipande-Thomas stable pairs \cite{Panda-Thom,Panda-Thom2}. More precisely, we show that to every fixed framed rank $2$ instanton sheaf $\mathcal{F}$, on $\p3,$ one may associate a PT-stable pair $(\mathcal{Q},s).$ Furthermore, we show that the Euler characteristic $\chi(\mathcal{M}_{\mathbb{P}^{3}}(c))$ is zero, for any $c>0.$ 

\vspace{0.3cm}

Section \ref{Multiple structures} is devoted to completely classify the Cohen-Macaulay supports $\mathcal{C}$ associated to the fixed PT-stable pair $(\mathcal{Q},s),$ i.e, coming from a fixed instanton sheaf of rank $2,$ in $\p3.$ This is achieved by using results on monomial multiple structures provided by Vatne \cite{Vatne}. 

\vspace{0.3cm}

Finally, in Section \ref{sheaves-multi-struc}, we show that a lower bound for the number of irreducible components of $\mathcal{M}^{\mathbb{T}}_{\mathbb{P}^{3}}(c),$ is given by the number of partitions of $c.$ Moreover, we use results provided by Dr\'ezet \cite{Drezet1, Drezet2}, in order to give an explicit description of the first canonical filtration of the rank $0$ instanton sheaf $\mathcal{Q},$ for $c=2,$ and when the support is a primitive monomial double structure. When $c=3,$ we describe the $0$ instanton sheaf $\mathcal{Q},$ and its filtrations for the three possible monomial structures, in particular the first non primitive case is given in Theorem \ref{non-primitive}. Using these results, we show that $\mathcal{M}^{\mathbb{T}}_{\mathbb{P}^{3}}(3),$ has $7$ components. For $c=1,$ we also compute the dimension of the tangent space and the obstruction at the specific fixed stable pair $(\mathcal{Q},s).$

We wonder whether these fixed points can arise as degenerations of locally free framed instantons, i. e., if the fixed enumerated components intersect the closure of the framed locally free instanton moduli. We think that this problem is related to reachability of sheaves, on multiple structure \cite{Drezet3} and hope to address this problem in future work.

\bigskip\bigskip

\section{ADHM Data and instanton sheaves}\label{ADHM}

In this section we will gather useful results about ADHM data and instanton sheaves. Mostly, this material can be found in \cite{henni1,FJ2,J-i}. We consider in $\p3$ the homogeneous coordinates $[z_0 :z_1:z_2:z_3]\in\p3$ and the line $\ell_{\infty}$ given by the equations $z_0=z_1=0$. Set
$$
\ho_{\p1}=\langle z_0,z_1\rangle\subset\ho^0(\p3).
$$

Let $V$ and $W$ be complex vector spaces of dimension, respectively, $c$ and $r$. Set
$$
\bb:={\rm End}(V)^{\oplus 2}\oplus{\rm Hom}(W,V)\
$$

and consider the affine spaces

$$
\bb_{\p1}=\bb_{\p1}(W,V)=\bb_{\p1}(r,c):=\bb\otimes\ho_{\p1}
$$

A point of $\mathbf{B}_\p1$ will be called in this paper an \emph{ADHM datum over $\p1$}. 

One can write a point of $X\in\mathbf{B}_\p1$ as
$$
X=(A,B,I)
$$
where the above components are
$$
A = A_{0}\otimes z_0 + A_{1}\otimes z_1
$$
$$
B = B_{0}\otimes z_0 + B_{1}\otimes z_1
$$
$$
I = I_0\otimes z_0 + I_1\otimes z_1\ \ \ \ \ \ \ \
J = J_0\otimes z_0 + J_1\otimes z_1\
$$

\

\noindent with $A_{i}, B_{i}, \in {\rm End}(V)$, $I_i \in {\rm Hom}(W,V)$ and $J_i \in {\rm Hom}(V,W),$ $i=0,1$. Hence we naturally regard $A,B\in {\rm Hom}(V,V\otimes \ho_\p1)$, and also $I\in {\rm Hom}(W,V\otimes \ho_\p1)$ and $J\in {\rm Hom}(V,W\otimes \ho_\p1)$.

For any $P\in\p1$ we define the \emph{evaluation maps} given on generators by
\begin{gather*}
\begin{matrix}
{\rm ev}_{P}^1: &\bb_\p1 & \longrightarrow & \mathbb{P}(\bb)\\
               & X_i\otimes z_i              & \longmapsto     & [z_i(P)X_i]
\end{matrix}
\end{gather*}
Note that $z_i(P)\in\cpx$ depends on a choice of trivialization of $\oh_\p1(1)$ at $P$ but the class on projective space does not. We set $X_P:={\rm ev}_{P}^{1}(X).$ In particular, $A_P$, $B_P$, $I_P$ and $J_P$ are defined as well. For any subspace $S\subset V$, we are able to naturally well define the subspaces $A_P(S),B_P(S),I_P(W)$ and $\ker J_P$ of $V$. 

We also consider the following {\em stability} and {\em costability} conditions:

\begin{definition}\label{Def-stab1}\cite{henni1}
Let $X=(A,B,I)\in\mathbf{B}_{\mathbb{P}^{1}}$. Let also $P$ be a point in $\mathbb{P}^{3}$.
\begin{enumerate}
\item[(i)] $X$ is said {\rm globally}  {\rm weak stable} if there is no proper subspace $S\subset V$ of dimension $1$ for which hold the inclusions  $A_P(S),B_P(S),I_P(W)\subset S,$ for every $P\in \p1$;
\item[(ii)] $X$ is said {\rm globally}  {\rm weak costable} if  there is no nonzero subspace $S\subset V$  of dimension $1$ for which hold the inclusions $A_P(S),B_P(S)\subset S\subset\ker J_P,$ for every $P\in \p1$.
\end{enumerate}
\end{definition}

We define $\mathbf{B}_\p1^{\rm gws}$  as the subsets of $\mathbf{B}_\p1$ consisting of globally weak stable ADHM data over $\mathbb{P}^{3}.$ In a similar way, one defines $\mathbf{B}_\p1^{\rm gwc}.$ Clearly, both of them are open subsets of $\mathbf{B}_\p1,$ in the Zariski topology.

\bigskip

An \emph{instanton sheaf} on $\mathbb{P}^{n}$ is a torsion free coherent sheaf $E$ with $c_{1}(E)=0$ satisfying the following cohomological conditions:
\begin{itemize}
\item[(i)] for $n\geq2,$ $\ho^{0}(\mathbb{P}^{n},E(-1))=\ho^{n}(\mathbb{P}^{n},E(-n))=0;$
\item[(ii)] for $n\geq3,$ $\ho^{1}(\mathbb{P}^{n},E(-2))=\ho^{n-1}(\mathbb{P}^{n},E(1-n))=0;$
\item[(iii)] for $n\geq4,$ $\ho^{p}(\mathbb{P}^{n},E(-k))=0$ for $2\leq p\leq n-1,$ $\forall k.$
\end{itemize}
The second Chern class $c:=c_{2}$ is called the \emph{charge} of $E$, and one can check that $c=-\chi(E)=h^1(E(-1))$. An instanton sheaf is said to be of \emph{trivial splitting type} if there exists a line $\ell$ in $\mathbb{P}^{3}$ such that the restriction $E|_{\ell}$ of $E$ on $\ell$ is trivial. A particular choice of trivialisation $\phi:E|_{\ell}\to\mathcal{O}|^{\oplus r}_{\ell}$ is called a \emph{framing}, and the pair $(E,\phi)$ is called a \emph{framed} instanton sheaf.

Now, we consider framed instantons on $\mathbb{P}^{3},$ and we fix the line $\ell_{\infty},$ given in the beginning of this section, as the framing line. Moreover, from \cite[Proposition 3.6]{henni1}, \cite{FJ2,J-i}, we have that framed rank $r$ instantons $E$ of charge $c$ are cohomologies of monads of the form:
\begin{equation}\label{monad}\mathcal{M}:\quad\xymatrix{V\otimes\mathcal{O}_{\mathbb{P}^{3}}(-1)\ar[r]^{\alpha\quad\quad}& (V\oplus V\oplus W)\otimes\mathcal{O}_{\mathbb{P}^{3}}\ar[r]^{\quad\quad\beta}& V\otimes\mathcal{O}_{\mathbb{P}^{3}}(1) }\end{equation}
$V$ is a $c-$dimensional vector space and can be identified with some homology group of $E$ twisted by a some differential sheaf, via the Beilinson spectral sequence construction of the monad \cite[\S 3]{henni1}. $W$ is $r-$dimensional space (this can be identified with $\mathbb{C}^{r},$ given a fixed basis, due to the framing). The maps $\alpha$ and $\beta$ are given by:
\begin{displaymath} 
\alpha=\left(\begin{array}{l} A_{0}z_{0}+A_{1}z_{1}+\mathds{1}z_{2}\\B_{0}z_{0}+B_{1}z_{1}+\mathds{1}z_{3} \\ J_{0}z_{0}+J_{1}z_{1}\end{array}\right),
\end{displaymath}
\begin{displaymath}
\beta=\left( -B_{0}z_{0}-B_{1}z_{1}-\mathds{1}z_{3}, A_{0}z_{0}+A_{1}z_{1}+\mathds{1}z_{2}, I_{0}z_{0}+I_{1}z_{1}\right),
\end{displaymath}
where $A_{0},A_{1},B_{0},B_{1}\in\End(V),$ $I_{0},I_{1}\in\Hom(W,V)$ and $J_{0},J_{1}\in\Hom(V,W).$ These matrices satisfy the following equations 
\begin{equation}
\label{equcad}
\begin{array}{c}
[ A_{0} , B_{0} ] + I_0J_0 = 0 \\

[ A_{1} , B_{1} ] + I_1J_1 = 0  \\

[ A_{0} , B_{1} ] + [ B_{0} , A_{1} ] + I_1J_2 + I_2J_1 = 0 
\end{array}
\end{equation}
which are equivalent to the complex condition $\beta\circ\alpha=0,$ in the monad $\mathcal{M}.$ Moreover there is a group action of $G=Gl(V),$ on the above data, given by
\begin{equation}
\begin{array}{c}
A_{i}\to gA_{i}g^{-1} \\
B_{i}\to gB_{i}g^{-1} \\
I_{i}\to gI_{i} \\
J_{i}\to J_{i}g^{-1}
\end{array}
\end{equation}
for $g\in G$ and $i=0,1.$

\begin{color}{green}
\end{color}
An easy verification, by using of additivity of the Chern character, shows that $c_{3}(E)=0.$  

We denote by $\mathcal{V}_{\mathbb{P}^{3}}(c,r)$ the space of the ADHM data satisfying the equations \eqref{equcad} and in which one can define the following subvarieties, acording to stabilities in Definition \ref{Def-stab1}
$\mathcal{V}^{\rm gws}_{\mathbb{P}^{3}}(c,r) \subset\mathcal{V}^{\rm st}_{\mathbb{P}^{3}}(c,r).$ Here, we recall that subscript $gws$ stands for globally weak stable and subscript $st$ stands for stable, meaning that there is no subspace $S\subsetneq V$ such that $A(S),B(S), I(W)\subset S\otimes\ho_{\p1}.$ The proof of the inclusion is discussed in \cite[Section 2.1]{henni1}.

\bigskip 

For an ADHM datum $X=(A_{0},B_{0},I_{0},J_{0},A_{1},B_{1},I_{1},J_{1}),$ consider the following algebraic set $$D_{X}=\{z\in\mathbb{P}^{3}| \alpha_{X}\textnormal{ is not injective}\}.$$ Note that we always have $\codim(D_{X})\geq2,$ by the framing condition.
A simpler version of \cite[Proposition 3.3 $\&$ Proposition 3.4]{henni1} can be written as the
\begin{theorem}\label{sheaf-stability}
The complex \eqref{monad} is a monad if and only if the corresponding ADHM datum is globally weak stable, and in this case $E,$ the middle cohomology of the monad, is torsion-free. Moreover, $E$ is a locally free framed instanton sheaf if and only if the ADHM datum $X$ is globally weak costable.
\
\end{theorem}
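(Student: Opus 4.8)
The plan is to unwind the word \emph{monad} into its two sheaf-theoretic requirements — that $\alpha$ be injective and $\beta$ surjective as morphisms of sheaves — and to read the conditions of Definition \ref{Def-stab1} off from them by fibrewise linear algebra. First I would note that $\alpha$ is \emph{automatically} a sheaf monomorphism: since $A_z=A_0z_0+A_1z_1+z_2\mathds{1}$ tends to the invertible $z_2\mathds{1}$ as $z_2\to\infty$, the map $\alpha_z$ is injective for generic $z$, so $H^{-1}=\ker\alpha=0$ for free and all the content of ``being a monad'' sits in the surjectivity of $\beta$, i.e. in $H^{1}=\coker\beta=0$. Evaluating \eqref{monad} at a point $z=[z_0:z_1:z_2:z_3]$, on the framing line $\ell_\infty=\{z_0=z_1=0\}$ the maps degenerate to $\alpha_z=(z_2\mathds{1},\,z_3\mathds{1},\,0)^{t}$ and $\beta_z=(-z_3\mathds{1},\,z_2\mathds{1},\,0)$, which are injective resp. surjective whenever $(z_2,z_3)\neq 0$; hence both degeneracy sets (the set $D_X$ for $\alpha$ and its analogue for $\beta$) are disjoint from $\ell_\infty$, and since a hypersurface in $\p3$ meets every line, this already forces codimension $\geq 2$, recovering the remark made after the definition of $D_X$. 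Off $\ell_\infty$ I would put $P=[z_0:z_1]\in\p1$ and observe that $\alpha_z v=0$ is the eigenvalue system $A_P(v)=-z_2v,\ B_P(v)=-z_3v,\ J_P(v)=0$, so that the free coordinates $z_2,z_3$ act as arbitrary eigenvalues; dually, a nonzero $\coker\beta_z$ is governed by the transposed system, which involves $I_P$ in place of $J_P$.

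Next I would match these fibre conditions with Definition \ref{Def-stab1}, using the fact that $J$ enters $\alpha$ while $I$ enters $\beta$. A nonzero $\coker\beta_z$ at some $z\notin\ell_\infty$ yields a subspace adapted to $I_P(W)$ and invariant under $A_P,B_P$ — the stability pattern \emph{(i)} — while a failure of $\alpha_z$ to be injective yields a subspace $S$ with $A_P(S),B_P(S)\subset S\subset\ker J_P$ — the costability pattern \emph{(ii)}. The delicate point, which I expect to be the main obstacle, is that a single fibre only supplies invariance under the one operator $A_P=A_0z_0+A_1z_1$ at that particular $P$, whereas Definition \ref{Def-stab1} demands invariance \emph{for every} $P\in\p1$, i.e. simultaneously under $A_0,A_1,B_0,B_1$. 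I would bridge this gap with the quadratic relations \eqref{equcad}: after absorbing the eigenvalues by the scalar shifts $A_0\mapsto A_0+z_2\mathds{1}$, $B_0\mapsto B_0+z_3\mathds{1}$ (which preserve \eqref{equcad}), applying the relations to the eigenvector promotes the pointwise data to a genuine common invariant subspace and, equivalently, produces an honest sub-complex of \eqref{monad} supported on $S$. Conversely, any destabilising subspace in the sense of \emph{(i)} gives such a sub-complex whose cohomology obstructs the surjectivity of $\beta$; this is the content of the equivalence ``$\beta$ a sheaf epimorphism $\iff$ global weak stability,'' hence of the monad property.

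Finally I would assemble the cohomological conclusions. Once $X$ is globally weak stable, $\beta$ is surjective on every fibre, so $\ker\beta$ is the kernel of a surjection of vector bundles and is itself a subbundle, in particular locally free; the short exact sequence $0\to\im\alpha\to\ker\beta\to E\to 0$ then presents $E$ as the quotient of a locally free sheaf by $\im\alpha\cong V\otimes\mathcal{O}_{\p3}(-1)$, whose degeneracy locus $D_X$ has $\codim(D_X)\geq 2$; a standard saturation/depth argument shows such a quotient is torsion-free, which gives the torsion-freeness claim. For the last equivalence I would observe that $E$ is locally free exactly when it is the quotient of one subbundle by another, i.e. when $\im\alpha$ is a subbundle of $\ker\beta$, which by the fibrewise analysis means $\alpha_z$ is injective for \emph{every} $z$, equivalently $D_X=\varnothing$; by the matching of the previous paragraph (now through the $\ker J_P$ pattern and the same globalisation via \eqref{equcad}) this is precisely global weak costability. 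Verifying that the resulting bundle is again a framed instanton — trivial on $\ell_\infty$ with the prescribed Chern data — then closes the argument.
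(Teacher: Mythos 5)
The paper offers no proof of this theorem at all --- it is stated as a ``simpler version'' of \cite[Propositions 3.3 and 3.4]{henni1} --- so your attempt can only be judged on its own merits; and in fact most of it is sound and follows the route of that cited source: the automatic sheaf-injectivity of $\alpha$, the reduction of the monad condition to fibrewise surjectivity of $\beta$, the codimension $\geq 2$ bound on both degeneracy loci via the framing line, the identification of $\ker\alpha_z$ and $\coker\beta_z$ with invariant subspaces of the evaluated datum $X_P$, and the saturation argument for torsion-freeness are all correct. The genuine error is precisely the step you flag as ``the delicate point,'' which is a phantom created by a misreading of Definition \ref{Def-stab1}. That definition must be read pointwise: $X$ is globally weak stable (resp.\ costable) when \emph{for every} $P\in\p1$ there is no subspace $S$ of codimension $1$ (the ``dimension $1$'' in item (i) is a slip; dimension $1$ is the costability case) with $A_P(S),B_P(S),I_P(W)\subset S$ (resp.\ no $S$ of dimension $1$ with $A_P(S),B_P(S)\subset S\subset\ker J_P$). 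With that quantifier order your fibrewise analysis already finishes the proof: failure of surjectivity of $\beta$ at a point lying over $P$ is equivalent, via a common eigenvector of the transposed operators and the relation $[A_P,B_P]+I_PJ_P=0$, to the existence of such an $S$ \emph{for that same} $P$; no passage from one $P$ to all $P$ is required. This pointwise reading is also the only one compatible with the inclusion $\mathcal{V}^{\rm gws}_{\p3}(c,r)\subset\mathcal{V}^{\rm st}_{\p3}(c,r)$ asserted in the paper, since under your reading (``no single $S$ invariant simultaneously for all $P$'') global weak stability would literally coincide with stability, invariance under every $A_P$ being the same as invariance under $A_0$ and $A_1$.

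The bridging step you propose instead --- that \eqref{equcad} ``promotes the pointwise data to a genuine common invariant subspace'' --- is false, and no argument can repair it. Take $c=1$, $r=2$, $V=\cpx$, $W=\cpx^{2}$, $A_{i}=B_{i}=0$, $J_{i}=0$, $I_{0}=(1,0)$, $I_{1}=(0,0)$; the equations \eqref{equcad} hold trivially. Over $P=[0:1]$ one has $I_P=0$, so $\beta_z=(-z_3,\,z_2,\,0,\,0)$ is the zero map at $z=[0:1:0:0]$ and the complex is not a monad; the pointwise destabilizer is $S=0$. Yet there is no proper $S\subset V$ with $I_P(W)\subset S$ for \emph{every} $P$, because the only candidate is $S=0$ and $I_{[1:0]}=I_0\neq 0$. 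So pointwise failure cannot be promoted to a simultaneous destabilizer; worse, under your reading of the definition this datum would count as globally weak stable while its complex is not a monad, i.e.\ the theorem as you interpret it is false. The same defect propagates to your last paragraph, where the equivalence $D_X=\varnothing\iff$ costability again invokes ``the same globalisation.'' The repair is simple: adopt the pointwise quantifier, delete the bridging step, and state the fibrewise equivalences for each fixed $P$; everything else in your outline then goes through.
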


\section{Torus action on the ADHM data}\label{T-action}

Now, we consider the following standard torus action of $\mathbb{T}:=\mathbb{T}^{3}$ on $\mathbb{P}^{3}$ given by 
$$F_{t}:\xymatrix@R-2pc{\mathbb{T}\times\mathbb{P}^{3}&\ar[r]&\mathbb{P}^{3} \\ ((t_{1},t_{2},t_{3}),z) &\ar@{|->}[r]& [z_{0};t_{1}z_{1};t_{2}z_{2};t_{3}z_{3}] }$$

This action can be lifted to the space of ADHM data as the following: let ${\bf T}:=\mathbb{T}\times\tilde{T},$ where $\tilde{T}$ is the maximal torus of $GL(W),$ given by elements of the form $e=diag(e_{1},\cdots,e_{r}).$ Let $\gamma_{e_{1},\cdots,e_{r}}$ be the isomorphism $\mathcal{O}|_{\ell}^{r}\ni(w_{1},\cdots,w_{r})\to(e_{1}w_{1},\cdots,e_{r}w_{r})\in\mathcal{O}|_{\ell}^{r}.$ For a framed instanton sheaf $(E,\phi:E|_{\ell}\to\mathcal{O}|_{\ell}^{r})$ one can define the following  $(t,e_{1},\cdots,e_{r})\cdot(E,\phi)=((F_{t}^{-1})^{\ast}E,\phi'),$ where $\phi'$ is given by the composition $$(F_{t}^{-1})^{\ast}E|_{\ell}\overset{(F_{t}^{-1})^{\ast}\phi}{\to}(F_{t}^{-1})^{\ast}\mathcal{O}|_{\ell}^{2}\longrightarrow\mathcal{O}|_{\ell}^{r}\overset{\gamma_{e_{1},\cdots,e_{r}}}{\to}\mathcal{O}|_{\ell}^{r}.$$

\begin{proposition}
The above action can be identified with the action on the ADHM data given by:

\begin{align}\label{torus-action}
&A_{0}\to t_{2}A_{0}  \qquad   &A_{1}\to  t_{1}^{-1}t_{2}A_{1} \\
&B_{0}\to  t_{3}B_{0}  \qquad    &B_{1}\to  t_{1}^{-1}t_{3}B_{1} \nonumber\\
&J_{0}\to t_{3}eJ_{0}   \qquad   &J_{1}\to t_{1}^{-1}t_{3}eJ_{1} \nonumber\\
&I_{0}\to t_{2}I_{0}e^{-1}  \qquad   &I_{1}\to t_{1}^{-1}t_{2}I_{1}e^{-1} \nonumber
\end{align}
Moreover, the ADHM equations \eqref{equcad} and stability conditions are preserved.
\end{proposition}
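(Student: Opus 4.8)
The plan is to verify the claimed action directly by tracking how the torus action on $\mathbb{P}^{3}$ induces an action on the monad \eqref{monad}, and then reading off the effect on the matrix entries. First I would fix a point $(t_1,t_2,t_3)\in\mathbb{T}$ and $e=\mathrm{diag}(e_1,\dots,e_r)\in\tilde T$, and consider the pullback $(F_t^{-1})^*E$. Since $E$ is the middle cohomology of the monad $\mathcal{M}$, pulling back by $F_t^{-1}$ yields a pullback monad $(F_t^{-1})^*\mathcal{M}$ whose maps $(F_t^{-1})^*\alpha$ and $(F_t^{-1})^*\beta$ are obtained by substituting the transformed coordinates. Concretely, $F_t$ sends $[z_0:z_1:z_2:z_3]$ to $[z_0:t_1z_1:t_2z_2:t_3z_3]$, so $F_t^{-1}$ acts on coordinates by $z_0\mapsto z_0$, $z_1\mapsto t_1^{-1}z_1$, $z_2\mapsto t_2^{-1}z_2$, $z_3\mapsto t_3^{-1}z_3$. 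The key computation is substituting these into $\alpha$ and $\beta$.

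Next I would carry out the substitution into the explicit forms of $\alpha$ and $\beta$ given in the excerpt. For instance, the first row of $\alpha$ becomes $A_0z_0+t_1^{-1}A_1z_1+t_2^{-1}\mathds{1}z_2$; to restore this to the standard monad form (with the identity coupled to $z_2$ having coefficient $1$), I would rescale rows and columns of the monad by the $G=Gl(V)$ action and by compatible rescalings of $V$ and $W$. The point is that the transformed monad must be brought back into the \emph{canonical} normal form where the coefficient of $z_2$ in the first component and of $z_3$ in the second component is the identity $\mathds{1}$; the normalizing rescalings are precisely what convert the naive coordinate substitution into the asymmetric factors $t_2, t_1^{-1}t_2$, etc., appearing in \eqref{torus-action}. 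The framing datum $e$ enters through $\gamma_{e_1,\dots,e_r}$, contributing the factors $e$ on $J_0, J_1$ and $e^{-1}$ on $I_0, I_1$, since $J\in\Hom(V,W)$ is post-composed with the framing while $I\in\Hom(W,V)$ is pre-composed with its inverse. Matching coefficients term by term on all eight matrices $A_i,B_i,I_i,J_i$ gives exactly the stated transformation rules.

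For the final assertion, I would check that the action \eqref{torus-action} preserves the ADHM equations \eqref{equcad} and the stability conditions. Preservation of the equations follows because each equation is homogeneous of a fixed total weight in the torus variables: substituting the scaled matrices into, say, $[A_0,B_0]+I_0J_0=0$ multiplies the whole expression by a single scalar $t_2t_3$ (noting the $e,e^{-1}$ in $I_0J_0$ cancel), so the vanishing is preserved; the other two equations are checked identically. Preservation of global weak stability and costability follows since these conditions \eqref{Def-stab1} are stated in terms of existence of invariant subspaces under $A_P,B_P,I_P,J_P$, and the torus rescalings only multiply these operators by nonzero scalars, hence do not alter which subspaces are invariant.

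I expect the main obstacle to be the bookkeeping in the normalization step: getting the \emph{precise} combination of torus factors on each matrix requires carefully tracking the rescaling of $V$, of $W$, and of the line bundle trivializations $z_i(P)$ simultaneously, and it is easy to misattribute a factor of $t_1$ or to drop a $t_2/t_3$ asymmetry. Verifying that the resulting factors are exactly \eqref{torus-action} — rather than some globally equivalent but differently-normalized set — is the delicate part; the equation- and stability-preservation claims are then routine.
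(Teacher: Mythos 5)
Your proposal is correct and takes essentially the same route as the paper's proof: there, too, one pulls back the monad (substituting $z_1\mapsto t_1^{-1}z_1$, $z_2\mapsto t_2^{-1}z_2$, $z_3\mapsto t_3^{-1}z_3$), restores the canonical form via the explicit rescaling $(v_1,v_2,w)\mapsto(t_3^{-1}v_1,t_2^{-1}v_2,t_2^{-1}w)$ of the middle term (which multiplies $\alpha$ by an irrelevant overall factor $1/(t_2t_3)$), and then composes with $\gamma_{e_1,\dots,e_r}$ to produce the $e^{\pm 1}$ factors on $I_i,J_i$. One minor imprecision in your last step: at a fixed $P\in\mathbb{P}^{1}$ the transformed operator is $A'_P=t_2A_{P'}$ with $P'=[z_0(P):t_1^{-1}z_1(P)]$, not a scalar multiple of $A_P$ itself (the $z_0$- and $z_1$-coefficients scale differently), so preservation of the (co)stability conditions additionally uses that $P\mapsto P'$ is a bijection of $\mathbb{P}^{1}$ and that those conditions are quantified over all points --- a harmless, one-line repair.
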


\begin{proof}
Since any framed instanton sheaf $E$ is the middle cohomology of a monad as in \eqref{monad}, then the pull back $(F_{t}^{-1})^{\ast}E$ is the cohomology of a similar monad with maps $\alpha$ and $\beta$ given as below:
\begin{displaymath} 
\alpha=\left(\begin{array}{l} A_{0}z_{0}+A_{1}t_{1}^{-1}z_{1}+\mathds{1}t_{2}^{-1}z_{2}\\B_{0}z_{0}+B_{1}t_{1}^{-1}z_{1}+\mathds{1}t_{3}^{-1}z_{3} \\ J_{0}z_{0}+J_{1}t_{1}^{-1}z_{1}\end{array}\right),
\end{displaymath}
\begin{displaymath}
\beta=\left( -B_{0}z_{0}-B_{1}t_{1}^{-1}z_{1}-\mathds{1}t_{3}^{-1}z_{3}, A_{0}z_{0}+A_{1}t_{1}^{-1}z_{1}+\mathds{1}t_{2}^{-1}z_{2}, I_{0}z_{0}+I_{1}t_{1}^{-1}z_{1}\right),
\end{displaymath}

Under the isomorphism $$\begin{array}{l}
 V\otimes\mathcal{O}_{\mathbb{P}^{3}}\\ \oplus \\ V\otimes\mathcal{O}_{\mathbb{P}^{3}} \\ \oplus \\
 W\otimes\mathcal{O}_{\mathbb{P}^{3}}
\end{array}\ni \left(\begin{array}{l}
 v_{1}\\v_{2} \\ w
\end{array}\right)\to\left(\begin{array}{l}
 t_{3}^{-1}v_{1}\\t_{2}^{-1}v_{2} \\t_{2}^{-1} w
\end{array}\right)$$
the kernel of $\beta$ is sent to the kernel of
{\small
\begin{displaymath}\left( -(t_{3}B_{0})z_{0}-(t_{1}^{-1}t_{3}B_{1})z_{1}-\mathds{1}z_{3}, (t_{2}A_{0})z_{0}+(t_{1}^{-1}t_{2}A_{1})z_{1}+\mathds{1}z_{2}, (t_{2}I_{0})z_{0}+(t_{1}^{-1}t_{2}I_{1})z_{1}\right)
\end{displaymath}
}
and the image of $\alpha$ is sent to the image of 
\begin{displaymath}
\frac{1}{t_{2}t_{3}}\left(\begin{array}{l} (t_{2}A_{0})z_{0}+(t_{1}^{-1}t_{2}A_{1})z_{1}+\mathds{1}z_{2}\\(t_{3}B_{0})z_{0}+(t_{1}^{-1}t_{3}B_{1})z_{1}+\mathds{1}z_{3} \\ (t_{3}J_{0})z_{0}+(t_{1}^{-1}t_{3}J_{1})z_{1}\end{array}\right)
\end{displaymath}
Composing with the action of $\gamma_{e_{1},e_{2}}$ on the framing, the assertion follows.

\end{proof}

Now we consider the moduli space $\mathcal{M}_{\p3}(r,c):=\mathcal{V}^{st}_{\p3}(r,c)/G$ (this quotient makes sense by means of \cite[Section 2.3]{henni1}); a datum $[X]$ is invariant under the toric action if and only if there exists an element $g_{t}\in G$ such that $t\cdot X=g_{t}\cdot X.$ In other words, $[X]=[A_{0},B_{0},I_{0},J_{0},A_{1},B_{1},I_{1},J_{1}]$ is $\mathbb{T}-$invariant if and only if there exists a map $$\begin{array}{lll}\theta:\mathbb{T}& \to & G \\ \hspace{0,5cm} t & \mapsto & \theta(t)=g_{t}\end{array}$$ such that

\begin{align}\label{fixed}
&t_{2}A_{0}= g_{t}A_{0}g^{-1}_{t} \qquad   && t_{1}^{-1}t_{2}A_{1}= g_{t}A_{1}g^{-1}_{t}\\
&t_{3}B_{0}= g_{t}B_{0}g^{-1}_{t} \qquad    && t_{1}^{-1}t_{3}B_{1}=g_{t}B_{1}g^{-1}_{t} \nonumber\\
&t_{3}J_{0} = J_{0}g^{-1}_{t} \qquad   && t_{1}^{-1}t_{3}J_{1}= J_{1}g^{-1}_{t}\nonumber\\
&t_{2}I_{0}= g_{t}I_{0} \qquad   && t_{1}^{-1}t_{2}I_{1}=g_{t}I_{1}\nonumber
\end{align}

\bigskip

\begin{lemma}
If $[X]$ is fixed by the torus $\mathbb{T},$ then we have $J_{0}=J_{1}=0.$ Moreover, $X$ is not globally weak costable.
\end{lemma}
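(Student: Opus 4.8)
The plan is to exploit torus-fixedness to put a weight grading on $V$ and then read everything off the weights. First I would upgrade the assignment $t \mapsto g_t$ to an honest group homomorphism $\theta : \mathbb{T} \to G$. Stability of $X$ forces its $G$-stabilizer to be trivial: if $g$ commutes with all $A_i,B_i$ and fixes the $I_i$ and $J_i$, then $\ker(g-\id)$ is an $A,B$-invariant subspace containing $I(W)$, hence equals $V$ by stability, so $g=\id$. Uniqueness of $g_t$ then makes $t\mapsto g_t$ multiplicative, and (being a morphism) a rational representation $\theta:\mathbb{T}\to GL(V)$. Consequently $V=\bigoplus_{\chi}V_\chi$ splits into $\mathbb{T}$-weight spaces, where I write a weight multiplicatively as $t_1^{-p}t_2^{q}t_3^{s}$.

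Next I would translate the fixed-point equations \eqref{fixed} into weight shifts. Reading off each line shows that, relative to the grading, $A_0$ raises the weight by $t_2$, $A_1$ by $t_1^{-1}t_2$, $B_0$ by $t_3$ and $B_1$ by $t_1^{-1}t_3$; that $I_0(W)\subseteq V_{t_2}$ and $I_1(W)\subseteq V_{t_1^{-1}t_2}$ (the framing space $W$ carrying the trivial weight); and that $J_0$ can be nonzero only on $V_{t_3^{-1}}$ and $J_1$ only on $V_{t_1 t_3^{-1}}$. In the $(p,q,s)$ notation, the crucial point is that every generator $I_i(W)$ sits in $q=1$ and that none of $A_0,A_1,B_0,B_1$ lowers $q$ (the $A$'s raise it by $1$, the $B$'s fix it). Hence $V' := \bigoplus_{q\geq 1}V_\chi$ is invariant under all four maps and contains $I_0(W)+I_1(W)$, so stability gives $V'=V$; that is, every weight occurring in $V$ has $q\geq 1$. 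Since the weights $t_3^{-1}$ and $t_1 t_3^{-1}$ both have $q=0$, the spaces $V_{t_3^{-1}}$ and $V_{t_1 t_3^{-1}}$ vanish, and therefore $J_0=J_1=0$.

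For the costability claim I would first observe that $J_0=J_1=0$ makes $J_P=0$ for every $P\in\p1$, so that $\ker J_P=V$ and the constraint $S\subseteq\ker J_P$ in Definition \ref{Def-stab1}(ii) is vacuous. It then remains only to produce a single line $S$ that is simultaneously invariant under $A_0,A_1,B_0,B_1$. Here I would use that each of these four operators raises the integer $q+s$ by exactly one. Letting $N$ be the largest value of $q+s$ with $V_\chi\neq 0$ and picking any nonzero $v$ in a weight space at level $N$, all four of $A_0 v, A_1 v, B_0 v, B_1 v$ land at level $N+1$ and hence vanish. Then $S=\cpx v$ is a nonzero one-dimensional subspace with $A_P(S)=B_P(S)=0\subseteq S\subseteq\ker J_P$ for all $P$, which exhibits exactly the configuration forbidden by costability; thus $X$ is not globally weak costable.

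The main obstacle is the first paragraph: one must be sure that $t\mapsto g_t$ really can be taken as a genuine homomorphism, since only then does the clean weight decomposition exist, and this is where stability (via triviality of the stabilizer and uniqueness of $g_t$) does the essential work. Once the grading is in place the rest is bookkeeping, the only delicate point being the correct direction of the weight shifts, so that both the ``$q\geq 1$'' invariant-subspace argument and the ``top $q+s$'' common-kernel argument come out right.
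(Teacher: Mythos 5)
Your proof is correct, but it takes a genuinely different route from the paper's. The paper never constructs the weight decomposition: it works with the relations \eqref{fixed} one $t$ at a time, first killing $J_{\alpha}I_{\beta}$ and then $J_{\alpha_{1}}A^{l}_{\alpha_{2}}B^{m}_{\alpha_{3}}I_{\alpha_{4}}$ by pure scaling (an identity $x=t_{2}^{m+1}t_{3}^{l+1}x$ valid for all $t$ forces $x=0$), next using the ADHM equations \eqref{equcad} together with $JI=0$ to put any word in the $A$'s and $B$'s sandwiched between $J$ and $I$ into the normal form $JA^{l}B^{m}I$, and finally quoting stability --- $V\otimes\ho_{\p1}$ is spanned by such words applied to $I(W)$ --- to conclude $J\equiv 0$. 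That argument requires nothing about $t\mapsto g_{t}$ beyond its existence for each $t$, whereas your argument hinges on $\theta$ being an algebraic homomorphism; your trivial-stabilizer/uniqueness step correctly yields multiplicativity, and algebraicity (needed for the eigenspace decomposition) is standard (closed graph plus Zariski's main theorem in characteristic zero), but it is the one ingredient you assert rather than prove. In exchange, your route buys a complete proof of the second assertion, which the paper dismisses with a bare ``it follows'': note that $J=0$ alone does \emph{not} imply failure of costability --- for instance the datum with $A_{0}=E_{12}$, $A_{1}=E_{21}$ (elementary $2\times 2$ matrices), $B_{0}=B_{1}=0$, $J=0$ and any nonzero $I$ satisfies \eqref{equcad}, is stable, and admits no line invariant under all four matrices, hence \emph{is} globally weak costable. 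It is precisely your grading --- all four operators raise $q+s$ by one, so a nonzero vector in a top-level weight space spans a line killed by $A_{0},A_{1},B_{0},B_{1}$ --- that produces the forbidden subspace $S$; the fixed-point structure, and not merely the vanishing of $J$, is what defeats costability. So your write-up makes explicit, and in fact supplies, a step the paper leaves implicit.
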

\begin{proof}
Suppose $[X]$ is fixed by the torus $\mathbb{T},$ and let $t=(t_{1},t_{2},t_{3}).$ Then one has $J_{0}I_{0}=(J_{0}g^{-1}_{t})(g_{t}I_{0})=(t_{3}J_{0})(t_{2}I_{0})=t_{2}t_{3}J_{0}I_{0},$ hence $J_{0}I_{0}=0.$ In the same way, one shows that $J_{\alpha}I_{\beta}=0,$ for all $\alpha,\beta=0,1.$
Moreover, for $A=z_{0}A_{0}+z_{1}A_{1}$ $B=z_{0}B_{0}+z_{1}B_{1},$ $I=z_{0}I_{0}+z_{1}I_{1}$ and $J=z_{0}J_{0}+z_{1}J_{1}$ such that $\lbrack A,B\rbrack+IJ=0,$ $\forall z_{0},z_{1},$ one has

$$JBA=J\lbrack A,B\rbrack+JAB=J(-IJ)+JAB=-\underbrace{(JI)}_{0}J+JAB=JAB.$$ 
Thus, by induction, it follows that for any product $\hat{C}=C_{\alpha_{1}}\cdot C_{\alpha_{2}}\cdots BA\cdots C_{\alpha_{m}},$ where $\alpha_{i}=0,1,$ $\forall i=1,\cdots,m$ and $$C_{\alpha_{i}}=\left\{\begin{array}{ll}A & \alpha_{i}=0\\ B & \alpha_{i}=1, \end{array}\right.,$$ one has $J\hat{C}=C_{\alpha_{1}}\cdot C_{\alpha_{2}}\cdots BA\cdots C_{\alpha_{m}}.$ Hence, for any such product, we have:
\begin{equation}\label{commute}
    J\hat{C}=JA^{l}B^{m},
\end{equation}
where $l$ and $m$ are the numbers os $A'$s and $B'$s, respectively, appearing in $\hat{C}.$ On the other hand, we have 
\begin{align} 
J_{0}A_{0}^{l}B_{0}^{m}I_{0}&=J_{0}g^{-1}_{t}g_{t}A_{0}^{l}g^{-1}_{t}g_{t}B_{0}^{m}g^{-1}_{t}g_{t}I_{0}\nonumber \\
&=(t_{3}J_{0})(t_{2}^{l}A_{0}^{l})(t^{m}_{3}B_{0}^{m})(t_{2}I_{0}) \nonumber \\
&=(t_{2}^{m+1}t_{3}^{l+1})J_{0}A_{0}^{l}B_{0}^{m}I_{0}, 
\end{align} 
for all $t\in\mathbb{T}.$ Hence $J_{0}A_{0}^{l}B_{0}^{m}I_{0}=0.$ In the same way, it follows that 
\begin{equation}\label{sandwich0} J_{\alpha_{1}}A_{\alpha_{2}}^{l}B_{\alpha_{3}}^{l}I_{\alpha_{4}}=0,
\end{equation}
for all $\alpha_{i}=0,1.$
By the stability condition, we have that $V\otimes\ho_\p1$ is generated by the action of $C_{\alpha_{1}}^{l}C_{\alpha_{2}}^{l}$ on $I(w_{1})$ and $I(w_{2}),$ where $<w_{1},w_{2}>=\mathbb{C}^{2},$ Then every vector $v\in V\otimes\ho_\p1$ is of the form $\Sigma_{\alpha_{k}} C_{\alpha_{1}}\cdots C_{\alpha_{m}}I(w_{1})+\Sigma_{\alpha_{k}} C'_{\alpha_{1}}\cdots C'_{\alpha_{m}}I(w_{2})$
Hence 
\begin{align}
J_{\alpha}v&= \Sigma_{\alpha_{k}} J_{\alpha}C_{\alpha_{1}}\cdots C_{\alpha_{m}}I(w_{1})+\Sigma_{\alpha_{k}} J_{\alpha}C'_{\alpha_{1}}\cdots C'_{\alpha_{m}}I(w_{2})\nonumber \\
&=0 \textnormal{ by } \eqref{commute} \textnormal{ and } \eqref{sandwich0}. \nonumber
\end{align}

Therefore, both $J_{0}$ and $J_{1}$ vanish identically. Moreover, it follows that the datum $X$ is not globally weak costable.

\end{proof}

\newpage

\begin{theorem}\label{Tfixed-tfree}
A $\mathbb{T}-$fixed framed rank $r$ instanton of charge $c$ on $\mathbb{P}^{3}$ is

\begin{itemize}
\item[(i)] strictly torsion free if $c>0,$ or
\item[(ii)] equal to the trivial bundle if $c=0.$
\end{itemize}
\end{theorem}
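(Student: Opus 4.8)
The plan is to prove both statements by analyzing the middle cohomology $E$ of the monad \eqref{monad} under the constraint, established in the preceding Lemma, that a $\mathbb{T}$-fixed datum has $J_0 = J_1 = 0$. I would begin with case (ii), which should be the easier warm-up. When $c = 0$ the space $V$ is zero, so the monad degenerates: the outer terms vanish and $E = W \otimes \mathcal{O}_{\mathbb{P}^3}$, the trivial bundle of rank $r$. This follows immediately from the shape of $\alpha$ and $\beta$, so I would dispatch it in a sentence and concentrate on case (i).

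For case (i), the goal is to show that when $c > 0$ the fixed instanton $E$ is torsion free but \emph{not} locally free. By Theorem \ref{sheaf-stability}, $E$ is automatically torsion free (the fixed datum is still globally weak stable, stability being preserved by the action), and $E$ is locally free \emph{if and only if} the datum is globally weak costable. So the entire content of (i) reduces to showing that a $\mathbb{T}$-fixed datum with $c > 0$ fails to be globally weak costable. But this is precisely the last assertion of the Lemma immediately above: the vanishing $J_0 = J_1 = 0$ forces $J_P = 0$ identically, so for \emph{any} nonzero one-dimensional subspace $S \subset V$ we trivially have $S \subset \ker J_P$ for all $P$, and the costability obstruction then only requires $A_P(S), B_P(S) \subset S$. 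I would argue that such an invariant line must exist when $V \neq 0$: the fixed-point condition lets one diagonalize the $\mathbb{T}$-weight decomposition of $V$, and a common eigenvector of the (weight-homogeneous) operators $A_P, B_P$ supplies the forbidden subspace $S$. The cleanest route is simply to invoke the Lemma's conclusion that $X$ is not globally weak costable and combine it with Theorem \ref{sheaf-stability} to conclude $E$ is not locally free.

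It then remains to rule out the degenerate possibility that a non-locally-free torsion-free sheaf could still somehow be excluded, i.e. to confirm that "torsion free but not locally free" is exactly the meaning of \emph{strictly torsion free}. Here I would note that $E$ has rank $r$ with $E^{\vee\vee}$ locally free, and the non-vanishing charge $c = c_2(E) > 0$ guarantees $E \neq E^{\vee\vee}$, so the singularity locus is nonempty; combined with the earlier computation that $c_3(E) = 0$ and the purity of the singular locus discussed in the introduction, this pins down $E$ as genuinely non-locally-free whenever $c > 0$. The dichotomy $c = 0$ versus $c > 0$ is clean because the only way for a globally weak stable datum to be costable is to have $J \neq 0$ available to separate invariant subspaces, which the fixed locus forbids.

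The main obstacle, and the step deserving the most care, is establishing the existence of the one-dimensional invariant subspace $S$ rigorously rather than merely asserting it. One must check that the common-eigenvector argument survives the fact that $A_P$ and $B_P$ depend on the point $P \in \mathbb{P}^1$ and are built from the weight-graded pieces $A_0, A_1, B_0, B_1$; the correct statement is that the grading induced by $\theta(t) = g_t$ makes $V$ a direct sum of $\mathbb{T}$-weight spaces on which the $A_i$ and $B_i$ act as weighted shifts, so a lowest-weight line is preserved. Since the Lemma already extracts the conclusion I need (failure of global weak costability) directly from $J_0 = J_1 = 0$, I would lean on that and keep the theorem's proof short, letting Theorem \ref{sheaf-stability} do the translation from ADHM data to the geometric statement about $E$.
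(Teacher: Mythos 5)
Your proposal is correct and takes essentially the same route as the paper: for $c>0$ it combines the preceding Lemma (a $\mathbb{T}$-fixed datum has $J_{0}=J_{1}=0$ and is not globally weak costable) with Theorem \ref{sheaf-stability} to conclude $E$ is torsion-free but not locally free, and for $c=0$ it identifies $E$ with the trivial bundle. Your extra sketch of why an $A_{P},B_{P}$-invariant line exists when $V\neq 0$ (via the $\mathbb{T}$-weight grading making the matrices weighted shifts) is a justification of the Lemma's final assertion rather than a different method, and it correctly fills in a step the paper leaves implicit.
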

\begin{proof}
By the correspondence in Theorem \ref{sheaf-stability}, we conclude that, for $c>0,$ the instanton $E$ corresponding to $T-$fixed datum $X$ is not locally free. From the framing, we conclude that the singularity set of the sheaf is at least $2-$codimensional, hence the instanton sheaf is torsion-free, in this case.  If $c=0,$ the only instanton sheaf is the trivial bundle, which is clearly fixed by the torus action.
\end{proof}

\bigskip

In the rank $2$ case we have the following result:

\begin{theorem}\label{strict-torsion-free}
Let $E$ be a rank $2$ torsion-free instanton sheaf on $\mathbb{P}^{3}.$ Then
\begin{itemize}
\item[(i)] The singularity set $Sing(E)$ of $E$ is purely $1-$dimensional.
\item[(ii)] Furthermore, if $E$ is $\mathbb{T}-$fixed, then
\begin{itemize}
\item[(a)] its double dual $E^{\ast\ast}$ is the trivial locally free instanton sheaf $\mathcal{O}^{\oplus 2}_{\p3},$ and 
\item[(b)] its singularity locus is topologically supported on the rational line given by $z_{2}=z_{3}=0.$ Moreover, the matrices $A_{i},B_{i},$ for $i=0,1,$ in the corresponding ADHM datum are nilpotent.
 
\end{itemize}
\end{itemize}
\end{theorem}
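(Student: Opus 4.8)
The plan is to prove each statement by extracting geometric information from the monad \eqref{monad} and combining it with the algebraic consequences of the previous Lemma, namely $J_0=J_1=0$ and the vanishing of all sandwich products $J_\alpha A^l_\beta B^m_\gamma I_\delta$. I would organize the argument in the order (i), then (ii)(a), then (ii)(b).

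For part (i), the starting point is that for a rank $2$ torsion-free instanton $E$ one has $c_1(E)=0$, and since $E$ is reflexive away from its singular locus, the singularity set $Sing(E)$ has codimension at least $2$; the framing already forces $\codim(D_X)\geq 2$. The claim is that there is no component of codimension $3$, i.e. no embedded or isolated points. The natural route is to use the monad description: $E^{\ast\ast}$ is a reflexive rank $2$ sheaf with $c_1=0$, and the quotient $E^{\ast\ast}/E$ is supported exactly on $Sing(E)$. I would argue that $E^{\ast\ast}/E$ is pure of dimension $1$ by a local duality / depth argument — a rank $2$ reflexive sheaf on a smooth threefold has singularities in codimension $\geq 3$ but is reflexive, while the quotient measuring the failure of $E$ to be reflexive is pure of dimension $1$ because $E$ is already reflexive in codimension $2$ (being a second syzygy coming from the monad) so the only defect occurs along a curve. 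Concretely one checks that the local Ext-sheaves $\mathcal{E}xt^q(E,\oh)$ vanish in the range that forces purity, using that $E$ is the cohomology of the locally free monad \eqref{monad}.

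For (ii)(a), I would compute $E^{\ast\ast}$ directly from the ADHM data specialized to the fixed locus. With $J_0=J_1=0$ the map $\beta$ in \eqref{monad} loses its $W$-component coupling in a way that lets one split off the framing, and the displayed maps $\alpha,\beta$ degenerate: the bottom row $J_0z_0+J_1z_1$ of $\alpha$ vanishes, so $\alpha$ factors through $V\oplus V$. The double dual is the reflexive hull, computed as the cohomology of the associated reflexive monad; since $J=0$ kills the obstruction to local freeness, the monad's cohomology becomes locally free off the curve and its reflexive hull has $c_1=0$, $c_2=0$ after the singular curve is removed, hence is trivial. I would verify $E^{\ast\ast}\cong\oh_{\p3}^{\oplus 2}$ by showing it is a rank $2$ reflexive instanton sheaf with vanishing Chern classes $c_2=c_3=0$ (the latter already noted in the text), so by the instanton cohomology conditions together with Hartshorne's classification of low-charge reflexive sheaves it must be trivial.

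For (ii)(b), the support statement follows from the sandwich vanishing \eqref{sandwich0}: the fixed-point equations \eqref{fixed} force each $A_i,B_i$ to be weight vectors under the torus conjugation action, and an eigenvalue/weight bookkeeping argument shows every $A_i,B_i$ is strictly upper-triangular in a weight basis of $V$, hence nilpotent. Nilpotency of the $A_i,B_i$ then pins the support of the degeneracy locus $D_X$ to the torus-fixed line $z_2=z_3=0$, since away from that line the leading identity terms $\mathds{1}z_2,\mathds{1}z_3$ in $\alpha,\beta$ dominate and make the monad maps have maximal rank. The hardest step will be (ii)(a): carefully identifying the reflexive hull with the trivial bundle requires controlling the monad cohomology's behaviour across the singular curve and invoking the correct uniqueness statement for reflexive instantons of charge zero, rather than merely checking Chern numbers. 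I would expect to lean on the already-established triviality of $E|_{\ell_\infty}$ from the framing, combined with $c_2(E^{\ast\ast})=0$, to conclude rigidity via the instanton cohomological vanishings (i)--(iii) for reflexive sheaves.
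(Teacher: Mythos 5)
Your parts (i) and (ii)(b) are essentially sound, but part (ii)(a) — which you yourself identify as the hardest step — has a genuine gap at its central point. You propose to conclude $E^{\ast\ast}\cong\oh_{\p3}^{\oplus 2}$ by showing $E^{\ast\ast}$ is a reflexive instanton sheaf with $c_{2}=c_{3}=0$, but the claim $c_{2}(E^{\ast\ast})=0$ (``$c_{2}=0$ after the singular curve is removed'') is never justified and is essentially circular: $c_{2}(E^{\ast\ast})=c-m$, where $m$ is the multiplicity of the $1$-dimensional support of $E^{\ast\ast}/E$, and knowing $m=c$ is equivalent to the triviality you are trying to prove; nothing in $J_{0}=J_{1}=0$ yields it directly. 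The paper avoids computing $c_{2}(E^{\ast\ast})$ altogether. It first proves (Claim 3) that $E^{\ast\ast}$ satisfies the instanton cohomological conditions — a nontrivial verification whose key point, $\ho^{1}(\p3,E^{\ast\ast}(-2))=0$, comes from a hypercohomology vanishing for the dual of the monad \eqref{monad} viewed as a perverse instanton sheaf — whence $c_{3}(E^{\ast\ast})=0$ and $E^{\ast\ast}$ is locally free by Hartshorne's criterion \cite{Hart}; it then invokes the dichotomy of Theorem \ref{Tfixed-tfree}: a $\mathbb{T}$-fixed framed instanton of positive charge is \emph{strictly} torsion-free, hence not locally free, so the locally free $\mathbb{T}$-fixed instanton $E^{\ast\ast}$ must have charge $0$ and therefore equals $\oh_{\p3}^{\oplus 2}$. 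You never invoke this dichotomy (nor the Lemma's conclusion that fixed data are not globally weak costable), and without it — or an honest proof that $c_{2}(E^{\ast\ast})=0$ — your argument cannot close. Note also that your appeal to ``$c_{3}=0$, already noted in the text'' presupposes that $E^{\ast\ast}$ is monad cohomology, i.e.\ exactly the unproved Claim 3.

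The other two parts are correct and take routes genuinely different from the paper's. For (i), your plan works and is in fact cleaner once stated precisely: the sequence \eqref{beta-sequence} exhibits a length-one locally free resolution of $E$, so $\mathcal{E}xt^{q}(E,\oh_{\p3})=0$ for $q\geq 2$, and the long exact sequence of local Ext sheaves for $0\to E\to E^{\ast\ast}\to\mathcal{Q}\to 0$ (together with $\mathcal{E}xt^{3}(E^{\ast\ast},\oh_{\p3})=0$ for the reflexive $E^{\ast\ast}$) kills $\mathcal{E}xt^{3}(\mathcal{Q},\oh_{\p3})$, which is exactly the purity criterion; the paper instead proves $\ho^{0}(\p3,\mathcal{Q}(-4))=0$ by global cohomology computations on the monad, then uses Serre duality and the local-to-global spectral sequence. (Your parenthetical ``$E$ is already reflexive in codimension $2$'' is not the right justification — purity is precisely the absence of $0$-dimensional subsheaves of $\mathcal{Q}$ — but the Ext vanishing you point to does the job; you should also note that identifying $Sing(E)$ with the support of $\mathcal{Q}$ uses local freeness of $E^{\ast\ast}$.) For (ii)(b) you reverse the paper's logic: you get nilpotency first by weight bookkeeping (valid: freeness of the $G$-action on stable data makes $t\mapsto g_{t}$ a homomorphism, each $A_{i},B_{i}$ shifts the weight decomposition of $V$ by a nonzero character, and so does any pencil $A_{0}z_{0}+A_{1}z_{1}$), and then locate $D_{X}$ inside $z_{2}=z_{3}=0$ because the pencils have no nonzero eigenvalues; the paper first locates $Sing(E)$ ($\mathbb{T}$-invariance, purity, and avoidance of $\ell_{\infty}$ leave only $l_{0}$) and deduces nilpotency afterwards. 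Both are valid; yours is more self-contained on the ADHM side.
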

\begin{proof}
Suppose $E$ is reflexive, then $E$ should be locally free since it is of rank two and has third chern class $c_{3}(E)=0$ \cite[Proposition 2.6]{Hart}. This contradicts Theorem \ref{Tfixed-tfree} for $c\neq0.$ Hence the singularity set $Sing(E)$ of $E$ is $1-$dimensional. it remains to check purity. This is done by showing that the quotient sheaf $\mathcal{Q}:=E^{\ast\ast}/E$ is pure. The sheaf $\mathcal{Q}$ is suported in codimension $2,$ thus we have $\mathcal{E}xt^{q}(\mathcal{Q},\mathcal{O}_{\mathbb{P}^{3}}(-4))=0,$ for $q=0,1.$ Moreover, by \cite[Proposition 1.1.10]{Huy}, $\mathcal{Q}$ is pure if, and only if, $\codim (\mathcal{E}xt^{3}(\mathcal{Q},\mathcal{O}_{\mathbb{P}^{3}}(-4)))\geq 3+1=4.$ In other words, we need to show that $\mathcal{E}xt^{3}(\mathcal{Q},\mathcal{O}_{\mathbb{P}^{3}}(-4)))$ is the zero sheaf.

Note that $\mathcal{Q}$ is a $1-$dimensional sheaf, so by \cite[Proposition 1.1.6]{Huy} we have $\codim(\mathcal{E}xt^{3}(\mathcal{Q},\mathcal{O}_{\mathbb{P}^{3}}(-4)))\geq 3.$ Hence $\mathcal{E}xt^{3}(\mathcal{Q},\mathcal{O}_{\mathbb{P}^{3}}(-4))$ is, supposedly, supported on a zero-dimentional subscheme in $\mathbb{P}^{3},$ lying inside $Sing(E).$   

By Serre-Grothendieck duality can write  
\begin{align}
\ext^{3}(\mathcal{Q},\mathcal{O}_{\mathbb{P}^{3}})&=\ext^{0}(\mathcal{O}_{\mathbb{P}^{3}},\mathcal{Q}(-4))^{\ast} \nonumber \\
&=\ho^{0}(\mathbb{P}^{3},\mathcal{Q}(-4))^{\ast}.\nonumber
\end{align}

Now we will show that $\ho^{0}(\mathbb{P}^{3},\mathcal{Q}(-4))^{\ast}=0.$, This will be achieved by using the long exact sequence in cohomology, associated to the short exact sequence $$0\to E(-4)\to E^{\ast\ast}(-4)\to\mathcal{Q}(-4)\to0.$$ In fact, one has 

\begin{equation}\label{homo-sequence}
\to\ho^{0}(\mathbb{P}^{3},E^{\ast\ast}(-4))\to\ho^{0}(\mathbb{P}^{3},\mathcal{Q}(-4))\to\ho^{1}(\mathbb{P}^{3},E(-4)).
\end{equation}

\vspace{0.2cm}
{\bf Claim $1$: } $\ho^{1}(\mathbb{P}^{3},E(-4)).$
\vspace{0.2cm}

 From the monad, associated to $E,$ one has the sequences 

\begin{equation}\label{beta-sequence2}
0\to\ker\beta\to\mathcal{O}_{\mathbb{P}^{3}}^{\oplus (2c+2)}\stackrel{\beta}{\to} \mathcal{O}_{\mathbb{P}^{3}}(1)^{\oplus c}\to0,
\end{equation}

\begin{equation}\label{beta-sequence}
0\to\mathcal{O}_{\mathbb{P}^{3}}(-1)^{\oplus c}\to\ker\beta\to E\to0.
\end{equation}

By using \eqref{beta-sequence2} and its dual, one can verify that $\ho^{1}(\mathbb{P}^{3},\ker\beta\otimes \mathcal{O}_{\mathbb{P}^{3}}(-4))=0$ and $\ho^{0}(\mathbb{P}^{3},(\ker\beta)^{\ast}(-4))=0.$

Twisting the sequence \eqref{beta-sequence} by $\mathcal{O}_{\mathbb{P}^{3}}(-4),$ and analysing the long sequence in cohomology, it is not difficult to check that $\ho^{1}(\mathbb{P}^{3},E(-4))=0.$

\vspace{0.2cm}
{\bf Claim $2$: } $\ho^{0}(\mathbb{P}^{3},E^{\ast\ast}(-4)).$
\vspace{0.2cm}

On the other hand, by dualizing this sequence, one gets
$$0\to E^{\ast\ast}(-4)\to(\ker\beta)^{\ast}\otimes\mathcal{O}_{\mathbb{P}^{3}}(-4)\to\mathcal{O}_{\mathbb{P}^{3}}(-3)^{\oplus c}\to\mathcal{E}xt^{1}(E,\mathcal{O}_{\mathbb{P}^{3}}(-4))\to0.$$

Where we used the fact that $E^{\ast\ast}\cong E^{\ast},$ since $E^{\ast}$ is a rank $2$ reflexive sheaf on $\mathbb{P}^{3},$ with trivial first Chern class. Breaking this sequence into 
$$0\to E^{\ast\ast}(-4)\to(\ker\beta)^{\ast}\otimes\mathcal{O}_{\mathbb{P}^{3}}(-4)\to J\to0$$

and using the long exact sequence in cohomology one gets the desired result, since $\ho^{0}(\mathbb{P}^{3},(\ker\beta)^{\ast}(-4))$ is trivial.

It follows from the proved claims and the sequence \eqref{homo-sequence} that $\ext^{3}(\mathcal{Q},\mathcal{O}_{\mathbb{P}^{3}})=0,$ as desired.
On the other hand, from the local-to-global spectral sequence one has $$\ext^{3}(\mathcal{Q},\mathcal{O}_{\mathbb{P}^{3}})=\ho^{0}(\mathbb{P}^{3},\mathcal{E}xt^{3}(\mathcal{Q},\mathcal{O}_{\mathbb{P}^{3}}))\oplus\ho^{1}(\mathbb{P}^{3},\mathcal{E}xt^{2}(\mathcal{Q},\mathcal{O}_{\mathbb{P}^{3}})).$$

 both contributing terms $$\ho^{0}(\mathbb{P}^{3},\mathcal{E}xt^{3}(\mathcal{Q},\mathcal{O}_{\mathbb{P}^{3}})),\qquad\ho^{1}(\mathbb{P}^{3},\mathcal{E}xt^{2}(\mathcal{Q},\mathcal{O}_{\mathbb{P}^{3}})),$$ to the local-to-global spectral sequence, must be trivial. Finally, observe that $\dim\ho^{0}(\mathbb{P}^{3},\mathcal{E}xt^{3}(\mathcal{Q},\mathcal{O}_{\mathbb{P}^{3}}))=0$ is the length of the sheaf $\mathcal{E}xt^{3}(\mathcal{Q},\mathcal{O}_{\mathbb{P}^{3}}),$ which must be zero since any sheaf supported on a zero-dimensional subscheme of $\mathbb{P}^{3},$ with zero length is the zero sheaf. Hence $\mathcal{Q}$ is pure.

To see that $c_{3}(E^{\ast\ast})=0,$ it suffice to show the following 

\vspace{0.2cm}
{\bf Claim $3$: } $E^{\ast\ast}$ is the cohomology of a monad
\vspace{0.2cm}

Clearly, $E^{\ast\ast}$ satisfies $\ho^{0}(\mathbb{P}^{3},E(-1))=\ho^{3}(\mathbb{P}^{3},E(-3))=0,$ since it is also framed. Moreover, it also satisfies $\ho^{2}(\mathbb{P}^{3},E^{\ast\ast}(-2))=0,$ since this sits in the sequence
$$\ho^{2}(\mathbb{P}^{3},E(-2))\to\ho^{2}(\mathbb{P}^{3},E^{\ast\ast}(-2))\to\ho^{2}(\mathbb{P}^{3},\mathcal{Q}(-2)),$$
in which $\ho^{2}(\mathbb{P}^{3},E(-2))=0,$ by instanton definition (page $4$), and $\ho^{2}(\mathbb{P}^{3},\mathcal{Q}(-2))=0$ from the fact that $\dim Supp(\mathcal{Q})=1.$

The dual of the complex \eqref{monad}, is a perverse instanton sheaf of trivial splitting type \cite[\S 5.4]{henni1}, \cite{HL} whose zero'th cohomology is $E^{\ast},$ and first cohomology is $\mathcal{E}xt^{1}(E,\mathcal{O}_{\mathbb{P}^{3}})$ Then by \cite[Theorem 5.13]{henni1}, The hypercohomology $\mathbb{H}^{1}(\mathbb{P}^{3},\mathcal{M}^{\ast}(-2))$ vanishes, and since this is just $\ho^{1}(\mathbb{P}^{3},E(-2))\oplus\ho^{0}(\mathbb{P}^{3},\mathcal{E}xt^{1}(E^{\ast},\mathcal{O}_{\mathbb{P}^{3}})(-2)),$ one gets in particular that $\ho^{1}(\mathbb{P}^{3},E^{\ast}(-2))=0.$ Again, one has $E^{\ast\ast}\cong E^{\ast},$ hence the double dual satisfy the definition of instanton sheaf, and this proves the claim.

Finally, recall that the double dual of any sheaf is reflexive. Thus $E^{\ast\ast}$ is a reflexive framed instanton sheaf, which fixed by the torus action on $\mathbb{P}^{3}.$ Moreover, by \cite[Proposition 2.6]{Hart}, $E^{\ast\ast}$ should be locally free, since $c_{3}(E^{\ast\ast})=0.$ But according to Theorem \ref{Tfixed-tfree} we should have $E^{\ast\ast}=\mathcal{O}^{\oplus 2}_{\mathbb{P}^{3}},$ since the non trivial $\mathbb{T}-$fixed istantons should be strictly torsion-free.

To conclude the proof of item ${\rm (b)}$ we note that $E$ is torsion free, by Theorem \ref{Tfixed-tfree} and its singularity set is purely $1-$dimensional, by item ${\rm (i)}.$ Moreover, by the framing condition it does not intersect the framing line. In particular, the singularity set is also $\mathbb{T}-$invariant. But the only invariant codimension $2$ subscheme of $\mathbb{P}^{3},$ as a toric variety, which does not intersect the framing line is supported on the line $[z_{0},z_{1},0,0].$

\vspace{0.5cm}

The singularity set is the locus on which the map $\alpha,$ in the monad \eqref{monad}, is not injective. In particular, one can characterize it in terms of the eigenvalues equations $$ \det[(A_{0}z_{0}+A_{1}z_{1})+z_{2}\mathds{1}]=0, \hspace{1cm} \det[(B_{0}z_{0}+B_{1}z_{1})+z_{3}\mathds{1}]=0.$$

But we just showed that all the corresponding eigenvalues $z_{2}, z_{3}$ must be $0.$ Hence the matrices $(A_{0}z_{0}+A_{1}z_{1})$ and $(B_{0}z_{0}+B_{1}z_{1})$ must be nilpotent, for all $z_{0},z_{1},$ and consequently, the result follows.

\end{proof}

We note that a proof for item ${\rm (i)}$ can also be found in \cite{JG} ,however, we gave our own proof for completeness.Moreover, this is shorter version concerned mainly with $\mathbb{T}-$fixed locus.

From the above, we see that if $[X]\in\mathcal{M}_{\mathbb{P}^{3}(r,c)}$ is a $\mathbb{T}-$fixed point, then is represented by a datum $X=(A_{0},B_{0},I_{0},A_{1},B_{1},I_{1})$ satisfying the equations:
\begin{equation}
\label{equa-fixed}
\begin{array}{c}
[ A_{0} , B_{0} ] = 0 \\

[ A_{1} , B_{1} ] = 0  \\

[ A_{0} , B_{1} ] + [ B_{0} , A_{1} ] = 0 
\end{array}
\end{equation}

\section{Quotients and PT-stable pairs}\label{PT-pairs}

In this section we will adopt the following viewpoint:
Let $E$ be a $\mathbb{T}-$invariant torsion-free instanton sheaf of rank $2,$ then $E$ fits in the short exact sequence $$0\to E\to\mathcal{O}^{2}_{\mathbb{P}^{3}}\to\mathcal{Q}\to0.$$

The Hilbert polynomials of sheaves involved in this sequence are $$P_{E}(m)=\frac{1}{3}m^{3}+2m^{2}+(\frac{11}{3}-c)m+(2-2c),\quad P_{\mathcal{Q}}(m)=cm+2c.$$  Since every such  sheaf $E$ is given by a datum $X\in\mathcal{V}^{\mathbb{T}}_{\mathbb{P}^{3}}(c),$ one can think of $\mathcal{M}^{\mathbb{T}}_{\mathbb{P}^{3}}(c)$ as an open subset of the scheme ${\bf Quot}_{\mathcal{O}^{2}_{\mathbb{P}^{3}},\lbrack l_{0}\rbrack }^{\lbrack cm+2c\rbrack},$ which parametrizes quotients $\mathcal{O}^{2}_{\mathbb{P}^{3}}\twoheadrightarrow\mathcal{Q}$ with $\mathcal{Q}$ is $\mathbb{T}-$fixed $1-$dimensional pure sheaf, topologically supported on the fixed line $l_{0}: \mathbb{P}^{1}\hookrightarrow\mathbb{P}^{3},$ not intersecting the framing line $l_{\infty},$ that is $l_{0}$ is given by $\lbrack z_{0};z_{1}\rbrack\mapsto\lbrack z_{0};z_{1};0;0\rbrack.$ The instanton cohomological conditions on $E$ imply the $\mathcal{Q}$ should satisfy $\ho^{0}(\mathbb{P}^{3},\mathcal{Q}(-2))=\ho^{1}(\mathbb{P}^{3},\mathcal{Q}(-2))=0.$ Obviously these are open conditions in flat families by semicontinuity.

Recall that a {\em rank $0$ instanton sheaf} is a pure sheaf of codimension $2$ satisfying $\ho^{0}(\mathbb{P}^{3},\mathcal{Q}(-2))=\ho^{1}(\mathbb{P}^{3},\mathcal{Q}(-2))=0.$ This definition was introduced in \cite[\S6.1]{HL}.

\begin{lemma}\label{stab-J}\cite[Lemma 6]{JMT}
Every rank $0$ instanton sheaf is $\mu-$semi-stable.
\end{lemma}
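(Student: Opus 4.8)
The plan is to argue directly from the two cohomological vanishings in the definition of a rank $0$ instanton sheaf, with no contradiction argument needed. Write $\mathcal{Q}$ for a rank $0$ instanton sheaf; by definition it is pure of dimension $1$ on $\mathbb{P}^{3}$, so its Hilbert polynomial is linear, $P_{\mathcal{Q}}(m)=r(\mathcal{Q})\,m+\chi(\mathcal{Q})$, where $r(\mathcal{Q})$ is the multiplicity (the leading coefficient). The relevant invariant for $\mu$-(semi)stability of a pure one-dimensional sheaf is the reduced degree $\hat\mu(\mathcal{Q})=\chi(\mathcal{Q})/r(\mathcal{Q})$, and $\mu$-semistability means $\hat\mu(\mathcal{Q}')\le\hat\mu(\mathcal{Q})$ for every nonzero proper subsheaf $\mathcal{Q}'\subset\mathcal{Q}$. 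The first observation is that $\ho^{0}(\mathbb{P}^{3},\mathcal{Q}(-2))=\ho^{1}(\mathbb{P}^{3},\mathcal{Q}(-2))=0$ forces $\chi(\mathcal{Q}(-2))=0$; since $\chi(\mathcal{Q}(-2))=P_{\mathcal{Q}}(-2)=-2\,r(\mathcal{Q})+\chi(\mathcal{Q})$, this says exactly that $\hat\mu(\mathcal{Q})=2$.

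Next I would record that any nonzero subsheaf $\mathcal{Q}'\subset\mathcal{Q}$ is again pure of dimension $1$: purity of $\mathcal{Q}$ kills any $0$-dimensional subsheaf, so $\mathcal{Q}'$ has no $0$-dimensional torsion, and as a subsheaf of a one-dimensional sheaf it has dimension at most $1$; being nonzero, its dimension is exactly $1$. In particular $\mathcal{Q}'(-2)$ is supported in dimension $1$, so $\ho^{i}(\mathbb{P}^{3},\mathcal{Q}'(-2))=0$ for $i\ge 2$ and therefore $\chi(\mathcal{Q}'(-2))=h^{0}(\mathcal{Q}'(-2))-h^{1}(\mathcal{Q}'(-2))$.

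The key step is then to twist the inclusion $\mathcal{Q}'\hookrightarrow\mathcal{Q}$ by $\mathcal{O}_{\mathbb{P}^{3}}(-2)$ and pass to global sections: left-exactness gives an injection $\ho^{0}(\mathbb{P}^{3},\mathcal{Q}'(-2))\hookrightarrow\ho^{0}(\mathbb{P}^{3},\mathcal{Q}(-2))=0$, whence $h^{0}(\mathcal{Q}'(-2))=0$. Combined with the previous paragraph this yields $\chi(\mathcal{Q}'(-2))=-h^{1}(\mathcal{Q}'(-2))\le 0$. Writing $\chi(\mathcal{Q}'(-2))=-2\,r(\mathcal{Q}')+\chi(\mathcal{Q}')=r(\mathcal{Q}')(\hat\mu(\mathcal{Q}')-2)$ and using $r(\mathcal{Q}')>0$, I conclude $\hat\mu(\mathcal{Q}')\le 2=\hat\mu(\mathcal{Q})$. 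As $\mathcal{Q}'$ was an arbitrary nonzero proper subsheaf, this is precisely $\mu$-semistability.

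I do not expect a genuine obstacle: the proof is essentially the observation that twisting down by $2$ turns the destabilizing inequality into the vanishing $\ho^{0}(\mathcal{Q}(-2))=0$. The only points requiring care are the bookkeeping of the slope normalization (verifying that the reduced degree $\chi/r$ is the quantity to compare for pure one-dimensional sheaves, in the Huybrechts--Lehn/Simpson conventions) and the remark that only $\ho^{0}(\mathcal{Q}(-2))=0$ drives the destabilizing estimate, the vanishing of $\ho^{1}(\mathcal{Q}(-2))$ serving solely to fix $\hat\mu(\mathcal{Q})=2$.
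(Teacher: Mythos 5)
Your proof is correct and follows essentially the same route as the paper's: restrict the vanishing $\ho^{0}(\mathbb{P}^{3},\mathcal{Q}(-2))=0$ to a subsheaf via left-exactness, conclude $\chi(\mathcal{Q}'(-2))=-h^{1}(\mathcal{Q}'(-2))\leq 0$, and translate this into the slope inequality against $\hat\mu(\mathcal{Q})=2$. The only difference is cosmetic: the paper takes $\mu(\mathcal{Q})=2$ directly from the Hilbert polynomial $cm+2c$ of the quotient, while you derive it from $\chi(\mathcal{Q}(-2))=0$, and you spell out the purity and dimension bookkeeping that the paper leaves implicit.
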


\begin{proof}
Let ${\bf T}$ a subsheaf of $\mathcal{Q}$ with Hilbert polynomial $P_{{\bf T}}(m)=am+b.$ Note that $\ho^{0}(\mathbb{P}^{3},\mathcal{Q}(-2))=0$ implies that $\ho^{0}(\mathbb{P}^{3},{\bf T}(-2))=0.$ Thus $P_{{\bf T}}(-2)=-2a+b=-\ho^{1}(\mathbb{P}^{3},{\bf T}(-2))\leq0.$ Hence $\mu({\bf T})=\frac{b}{a}\leq2=\frac{2c}{c}=\mu(\mathcal{Q}).$  
\end{proof}
Thus the quotient $\mathcal{Q}$ is a rank $0$ instanton sheaf, and hence $\mu-$semi-stable.

\bigskip

Following \cite{Panda-Thom}, let $q\in\mathbb{Q}[x]$ a degree $1$ polynomial with positive leading coefficient. For $n\in\mathbb{Z}$ and $\beta\in\ho^{2}(\mathbb{P}^{3},\mathbb{Z}),$ we will consider pairs $\mathcal{O}_{\mathbb{P}^{3}}\xrightarrow[]{s}\mathcal{Q}$, on $\mathbb{P}^{3},$ where $\mathcal{Q}$ is a pure sheaf, of dimension $1$ on $\mathbb{P}^{3},$ with Hilbert polynomial $$\chi(\mathcal{Q}(m))=m\cdot\beta+n,$$ The polynomial $q$ is viewed as a stability parameter, and $s$ is a non-zero section. We also let $r_{\mathcal{T}}$ denote, for any sheaf $\mathcal{T},$ the leading coefficient of its Hilbert polynomial. 
Since $\mathcal{Q}$ is pure, then any proper subsheaf $\mathcal{T}$ of $\mathcal{Q}$ is also pure of the same dimension. Therefore $r_{\mathcal{T}}>0.$ We say that the pair $(\mathcal{Q},s)$ is {\em $q-$(semi-)stable} if, for any proper subsheaf $\mathcal{T}\subset\mathcal{Q},$ the inequality 
\begin{equation}\label{q-stab1}
\frac{\chi(\mathcal{T}(m))}{r_{\mathcal{T}}}< (\leq) \frac{\chi(\mathcal{Q}(m))+q(m)}{r_{\mathcal{Q}}}, \qquad m>>0,
\end{equation}
holds, and for any proper subsheaf $\mathcal{T}\subset\mathcal{Q},$ through which the section $s$ factors, the inequality 
\begin{equation}\label{q-stab2}
\frac{\chi(\mathcal{T}(m))+q(m)}{r_{\mathcal{T}}}< (\leq) \frac{\chi(\mathcal{Q}(m))+q(m)}{r_{\mathcal{Q}}}, \qquad m>>0,
\end{equation}
holds. The moduli space of such $q-$(semi)stable pairs is denoted by $P_{n}^{q}(\mathbb{P}^{3},\beta),$ and was constructed by Le Potier, in \cite{LePo}, using GIT. Moreover the pair $(\mathcal{Q},s)$ is said to be {\em stable} if is stable in the large $q$ limit, i.e., for sufficiently large coefficients of $q.$ In this case, we will drop the superscript $q$ and denote the moduli of such stable pairs, simply, by $P_{n}(\mathbb{P}^{3},\beta).$ For more details about its construction and the fact that it  has a perfect obstruction, and hence a well defined virtual class  one might consult \cite{Panda-Thom} and references therein.

We also recall the following 
\begin{lemma}\label{pandha-Thom}\cite[Lemma 1.3]{Panda-Thom} \\
A pair $(\mathcal{Q},s)$ is stable (in the large $q$ limit) if, and only if,
\begin{itemize}
\item[(i)] $\mathcal{Q}$ has pure dimension $1.$ 
\item[(ii)] The cokernel of $s$ has dimension zero.
\end{itemize}
\end{lemma}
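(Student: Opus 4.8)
The plan is to prove both implications directly from the defining inequalities \eqref{q-stab1} and \eqref{q-stab2}, arranging the argument so that purity (item (i)) emerges as a \emph{consequence} of stability in the forward direction rather than being assumed. The one convention I would pin down at the outset, following Le Potier and Pandharipande--Thomas, is that $r_{\mathcal{T}}$ is the \emph{multiplicity} of $\mathcal{T}$, i.e.\ the coefficient of $m^{\dim\mathcal{Q}}=m^{1}$ in $\chi(\mathcal{T}(m))$; thus $r_{\mathcal{T}}=0$ whenever $\mathcal{T}$ is supported in dimension $0$, in which case $\chi(\mathcal{T}(m))/r_{\mathcal{T}}$ is read as $+\infty$ (equivalently, one clears denominators before comparing). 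Writing $q(m)=q_{1}m+q_{0}$ with $q_{1}>0$, every step then reduces to comparing the coefficients of $m$ on the two sides.

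For the implication (i)$\wedge$(ii)$\Rightarrow$ stability, let $\mathcal{T}$ be a nonzero proper subsheaf. Purity makes $\mathcal{T}$ pure of dimension $1$, so $r_{\mathcal{T}}>0$ and $\chi(\mathcal{T}(m))/r_{\mathcal{T}}$ is monic of degree $1$ with leading coefficient $1$. When $s$ does not factor through $\mathcal{T}$, the right-hand side of \eqref{q-stab1} has leading coefficient $1+q_{1}/r_{\mathcal{Q}}>1$, so \eqref{q-stab1} holds. When $s$ does factor through $\mathcal{T}$, the quotient $\mathcal{Q}/\mathcal{T}$ is a quotient of $\coker s$, hence $0$-dimensional by (ii); therefore $r_{\mathcal{T}}=r_{\mathcal{Q}}$ and \eqref{q-stab2} collapses to $\chi(\mathcal{T}(m))<\chi(\mathcal{Q}(m))$, which holds for $m\gg0$ because $\chi((\mathcal{Q}/\mathcal{T})(m))$ equals the positive length of the nonzero sheaf $\mathcal{Q}/\mathcal{T}$. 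Hence the pair is stable.

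For the converse I would assume stability for $q\gg0$ and extract the two conditions separately. Condition (ii) comes from testing \eqref{q-stab2} on $\mathcal{T}=\im s$, through which $s$ tautologically factors: if $\coker s$ were positive-dimensional then $r_{\mathcal{T}}<r_{\mathcal{Q}}$, so the left-hand side would have the strictly larger leading coefficient $1+q_{1}/r_{\mathcal{T}}$ (or be infinite, if $\im s$ is itself $0$-dimensional), contradicting \eqref{q-stab2}; thus $\coker s$ is $0$-dimensional. Condition (i) is the crucial point: if $\mathcal{Q}$ had a nonzero $0$-dimensional subsheaf $\mathcal{T}$, then $r_{\mathcal{T}}=0$, and whichever inequality applies --- \eqref{q-stab1} if $s$ does not factor through $\mathcal{T}$, \eqref{q-stab2} if it does --- becomes, after clearing the vanishing denominator, the statement that a strictly positive quantity is negative. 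This contradiction shows $\mathcal{Q}$ has no $0$-dimensional subsheaf, i.e.\ $\mathcal{Q}$ is pure.

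The main obstacle is exactly this purity step, which is the genuine content of the lemma and the reason the multiplicity normalization cannot be replaced by the length: a torsion subsheaf has vanishing top-dimensional multiplicity and is therefore ``infinitely destabilizing'', whereas under a naive length-normalization a non-pure pair such as $\mathcal{O}_{\ell_{0}}\oplus\mathbb{C}_{p}$ with its evident section would spuriously satisfy every inequality. I would therefore state the convention carefully up front. The remaining content is the elementary coefficient bookkeeping above, together with the additivity $r_{\mathcal{Q}}=r_{\im s}+r_{\coker s}$ of multiplicities in a short exact sequence; no boundedness input is needed here, since each individual inequality in fact holds for every $q_{1}>0$, in particular in the large-$q$ limit.
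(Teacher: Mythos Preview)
Your argument is correct and is essentially the standard proof from Pandharipande--Thomas. Note, however, that the present paper does not give its own proof of this lemma: it is stated with a citation to \cite[Lemma~1.3]{Panda-Thom} and used as a black box, so there is no in-paper argument to compare against. Your write-up supplies exactly what the reference contains --- the comparison of leading coefficients in the large-$q$ limit, the test of \eqref{q-stab2} against $\mathcal{T}=\im s$ to force $\coker s$ to be $0$-dimensional, and the observation that a $0$-dimensional subsheaf has multiplicity $r_{\mathcal{T}}=0$ and is therefore destabilizing, which yields purity. One cosmetic point: once you have established (ii), $\im s$ has $r_{\im s}=r_{\mathcal{Q}}>0$, so in your purity step the case ``$s$ factors through the $0$-dimensional $\mathcal{T}$'' cannot actually occur; keeping it does no harm, but you could drop it.
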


\vspace{0.5cm}

In what follows, we will say that $(\mathcal{Q},s)$ is a {\em stable $0$ instanton pair} if it is stable (in the large $q$ limit) and $\mathcal{Q}$ is a rank $0$ instanton sheaf. Moreover, as in \cite[p. 402]{JMT}, an instanton sheaf $E$ will be called {\em quasitrivial} if its double dual is the trivial sheaf. 

Recall also that if $\mathcal{Q}$ is associated to a quasitrivial torsion-free instanton, then one has the following commutative diagram:
\begin{equation}\label{s-map}
\xymatrix@R-1.3pc@C-1.3pc{ & &0 \ar[d]& &  \\
 & &\mathcal{O}_{\mathbb{P}^{3}}\ar[d]\ar[dr]^{s} & &  \\
 0\ar[r]& E\ar[r]& \mathcal{O}_{\mathbb{P}^{3}}^{2}\ar[r]\ar[d]& \mathcal{Q}\ar[r]& 0 \\
 & &\mathcal{O}_{\mathbb{P}^{3}}\ar[d] & &  \\
 & & 0& &  \\
}
\end{equation}

This define a section $s$ of $\mathcal{Q}.$ Let us put $\mathcal{G}:=\im(s)$ and $\mathcal{I}:=\ker(s).$ Then $\mathcal{I}$ is an ideal sheaf in $\mathcal{O}_{\mathbb{P}^{3}}$ of a subscheme $S$ of pure dimension $1$ in $\mathbb{P}^{3},$ with structure sheaf $\mathcal{O}_{\mathcal{C}}=\mathcal{G}.$ Moreover, if $E$ is $\mathbb{T}-$fixed, then $\mathcal{Q}$ is a $\mathbb{T}-$fixed rank $0$ instanton sheaf. It follows from Theorem \ref{strict-torsion-free} that the theoretical support of $S$ is exactly the line $l_{0},$ defined by the locus $(z_{2}=z_{3}=0),$ in $\mathbb{P}^{3}.$

\begin{proposition}\label{inst-pair-stab}
Let $(\mathcal{Q},s)$ be $0$ instanton pair associated to a rank $2$ quasitrivial instanton sheaf $E$ on $\mathbb{P}^{3}.$ Then
\begin{itemize}
\item[(i)] $(\mathcal{Q},s)$ is stable;
\item[(ii)]  Moreover, if $E$ is framed and $\mathbb{T}-$fixed, then $E\in\ext^{1}(\mathcal{I}_{\mathcal{Z}},\mathcal{I}_{\mathcal{C}}),$ where $\mathcal{I}_{\mathcal{C}}$ is a $\mathbb{T}-$fixed ideal sheaf of a subscheme of  pure dimension $1$ and $\mathcal{I}_{\mathcal{Z}}$ is $\mathbb{T}-$fixed ideal sheaf of a zero-dimensional subscheme and neither $\mathcal{Z},$ nor $\mathcal{C},$ intersects the line $l_{\infty}.$
\end{itemize} 
\end{proposition}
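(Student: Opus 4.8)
The plan is to establish the two items separately, drawing on the structural facts already assembled in Theorem~\ref{strict-torsion-free} and Lemma~\ref{pandha-Thom}. For item~(i), I would invoke the Pandharipande--Thomas criterion directly: by Lemma~\ref{pandha-Thom}, the pair $(\mathcal{Q},s)$ is stable in the large $q$ limit precisely when $\mathcal{Q}$ has pure dimension $1$ and the cokernel of $s$ has dimension zero. Purity of $\mathcal{Q}$ is exactly what Theorem~\ref{strict-torsion-free}(i) delivers (via the rank $0$ instanton property already recorded in Section~\ref{PT-pairs}). So the only thing to check is that $\coker(s)$ is $0$-dimensional. Reading off the commutative diagram \eqref{s-map}, the section $s$ is the composite $\mathcal{O}_{\mathbb{P}^3}\hookrightarrow\mathcal{O}_{\mathbb{P}^3}^2\twoheadrightarrow\mathcal{Q}$, and $\coker(s)=\mathcal{O}_{\mathbb{P}^3}^2/(E+\mathcal{O}_{\mathbb{P}^3})$, which is the image of the lower copy of $\mathcal{O}_{\mathbb{P}^3}$ in the vertical exact column, i.e. $\coker(s)\cong\mathcal{O}_{\mathbb{P}^3}/\mathcal{I}$ where $\mathcal{I}=\ker(s)$ is the ideal sheaf of the pure $1$-dimensional subscheme $S$. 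Since $\mathcal{G}=\mathcal{O}_{\mathcal{C}}=\im(s)$ is already supported in dimension $1$ on the line $l_0$, I would argue that the remaining quotient $\mathcal{Q}/\mathcal{G}$ is supported on a zero-dimensional set; concretely, $\coker(s)=\mathcal{Q}/\im(s)$ and a Hilbert-polynomial count (using $P_{\mathcal{Q}}(m)=cm+2c$ and the fact that $\mathcal{O}_{\mathcal{C}}$ carries the same degree $c$) forces the difference to have degree zero, hence dimension zero.

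For item~(ii), the goal is to realize $E$ as an extension $0\to\mathcal{I}_{\mathcal{C}}\to E\to\mathcal{I}_{\mathcal{Z}}\to 0$. The natural approach is to use the section $s$ to split off one trivial summand. From diagram~\eqref{s-map}, the inclusion $\mathcal{O}_{\mathbb{P}^3}\hookrightarrow\mathcal{O}_{\mathbb{P}^3}^2$ (the first factor) together with $E\hookrightarrow\mathcal{O}_{\mathbb{P}^3}^2$ suggests pulling back: I would form the fibre product / intersection of $E$ with the first copy of $\mathcal{O}_{\mathbb{P}^3}$ inside $\mathcal{O}_{\mathbb{P}^3}^2$ and identify it with an ideal sheaf $\mathcal{I}_{\mathcal{C}}$ of the $1$-dimensional scheme $\mathcal{C}=S$, while the quotient $E/\mathcal{I}_{\mathcal{C}}$ embeds into the second copy $\mathcal{O}_{\mathbb{P}^3}$ as an ideal sheaf $\mathcal{I}_{\mathcal{Z}}$. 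Concretely, composing $E\hookrightarrow\mathcal{O}_{\mathbb{P}^3}^2$ with the projection to the second factor $\mathcal{O}_{\mathbb{P}^3}$ gives a map whose image is an ideal sheaf $\mathcal{I}_{\mathcal{Z}}$ (it is an ideal sheaf because it is a rank $1$ torsion-free subsheaf of $\mathcal{O}_{\mathbb{P}^3}$ with trivial determinant), and whose kernel is $E\cap\mathcal{O}_{\mathbb{P}^3}=\mathcal{I}_{\mathcal{C}}$. The snake lemma applied to the two columns of \eqref{s-map} then identifies $\mathcal{I}_{\mathcal{Z}}$ as cokernel data and produces the extension class in $\ext^1(\mathcal{I}_{\mathcal{Z}},\mathcal{I}_{\mathcal{C}})$.

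The $\mathbb{T}$-equivariance and support statements follow from Theorem~\ref{strict-torsion-free}: since $E$ is $\mathbb{T}$-fixed and both $\mathcal{I}_{\mathcal{C}}$ and $\mathcal{I}_{\mathcal{Z}}$ are constructed canonically from $E$ and the toric splitting of $\mathcal{O}_{\mathbb{P}^3}^2$, they inherit the $\mathbb{T}$-fixed property; the support of $\mathcal{C}$ is the line $l_0$ by Theorem~\ref{strict-torsion-free}(ii)(b), and $\mathcal{Z}$ is $0$-dimensional by the Hilbert-polynomial/rank count, with neither meeting $l_\infty$ because of the framing condition that keeps $\coker$ and $\mathrm{Sing}(E)$ away from the framing line. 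The dimension of $\mathcal{Z}$ being zero is what makes $\mathcal{I}_{\mathcal{Z}}$ the ideal sheaf of points rather than a mixed scheme.

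I expect the main obstacle to be item~(ii): specifically, verifying that the map $E\to\mathcal{O}_{\mathbb{P}^3}$ (projection to the second factor) really has image equal to an honest ideal sheaf of a \emph{zero-dimensional} subscheme and not something of intermediate dimension, and that the kernel is precisely the ideal sheaf of the pure $1$-dimensional curve $\mathcal{C}$. This requires controlling both the codimension of the degeneracy locus of the projection and ruling out a $1$-dimensional torsion-free contribution to $\mathcal{I}_{\mathcal{Z}}$; the purity of $\mathcal{Q}$ from Theorem~\ref{strict-torsion-free} and the explicit Hilbert polynomials are the tools I would lean on to pin down these dimensions.
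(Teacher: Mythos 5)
Your skeleton matches the paper's own proof: item (i) via the Pandharipande--Thomas criterion of Lemma \ref{pandha-Thom} plus purity of $\mathcal{Q}$, and item (ii) via the $3\times 3$ diagram obtained by intersecting $E$ with the first copy of $\mathcal{O}_{\mathbb{P}^{3}}$ and projecting to the second --- this is exactly the paper's diagram \eqref{ideal-ext}. But there is a genuine gap at the one point where all the content sits: the zero-dimensionality of $\coker(s)$ (equivalently, of $\mathcal{Z}$). The paper does not prove this fact; it invokes \cite[Corollary 5]{JMT} for it, in \emph{both} items. Your proposal never establishes it. In item (i) you deduce it from ``the fact that $\mathcal{O}_{\mathcal{C}}$ carries the same degree $c$,'' but that assertion is precisely equivalent to the claim being proved: $\coker(s)=\mathcal{Q}/\im(s)$ is zero-dimensional if and only if $\deg\im(s)=\deg\mathcal{Q}=c$, and nothing a priori rules out, say, $\mathcal{C}$ being the reduced line $l_{0}$ of degree $1$ while $\mathcal{Q}$ has degree $c>1$, in which case $\coker(s)$ would be one-dimensional. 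So (i) as written is circular. In item (ii) you candidly flag the very same point as ``the main obstacle'' and leave it unresolved. To close the gap you must either cite \cite[Corollary 5]{JMT} as the paper does, or supply an actual argument, for instance: from $0\to\mathcal{O}_{\mathcal{C}}\to\mathcal{Q}\to\mathcal{Z}\to0$ and $\ho^{1}(\mathbb{P}^{3},\mathcal{Q}(-2))=0$ one gets $\ho^{1}(\mathbb{P}^{3},\mathcal{Z}(-2))=0$, because the next term is an $\ho^{2}$ of a sheaf supported in dimension $1$; writing $\mathcal{Z}$ as the structure sheaf of the subscheme $W$ cut out by $\pi_{2}(E)\subset\mathcal{O}_{\mathbb{P}^{3}}$, a one-dimensional component $W_{1}\subset W$ would force $\ho^{1}(\mathbb{P}^{3},\mathcal{O}_{W_{1}}(-2))=0$ as well, contradicting $\chi(\mathcal{O}_{W_{1}}(-2))=\chi(\mathcal{O}_{W_{1}})-2\deg W_{1}<0$, which in turn needs a genus bound for Cohen--Macaulay space curves. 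This is real work, not bookkeeping, and it is the step your Hilbert-polynomial count silently assumes.

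A secondary error in (i): you identify $\coker(s)\cong\mathcal{O}_{\mathbb{P}^{3}}/\ker(s)$. That quotient is $\im(s)=\mathcal{O}_{\mathcal{C}}$, which is one-dimensional --- the opposite of what you want. The correct identification, which you do state correctly in (ii), is $\coker(s)\cong\mathcal{O}_{\mathbb{P}^{3}}/\mathcal{I}_{\mathcal{Z}}$ with $\mathcal{I}_{\mathcal{Z}}=\pi_{2}(E)$ the image of $E$ under the second projection, an ideal sheaf different from $\ker(s)=\mathcal{I}_{\mathcal{C}}$. The remaining parts of (ii) --- $\mathbb{T}$-fixedness of $\mathcal{I}_{\mathcal{C}}$ and $\mathcal{I}_{\mathcal{Z}}$, and the avoidance of $l_{\infty}$ via Theorem \ref{strict-torsion-free}(ii)(b) --- are handled the same way as in the paper and are fine.
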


\begin{proof}

\begin{itemize}
\item[(i)] By \cite[Corollary 5]{JMT}, it follows that $\coker s$ is zero dimensional and since $\mathcal{Q}$ is pure, the result follow by Lemma \ref{pandha-Thom}

\vspace{0.3cm}

\item[(ii)] First, notice that since $\mathcal{Q}$ is a rank $0$ instanton, one can complete the commutative diagram \eqref{s-map} above to get the following one:
\begin{equation}\label{ideal-ext}
\xymatrix@R-1.3pc@C-1.3pc{ &0\ar[d] &0 \ar[d]&0\ar[d] &  \\
 0\ar[r]&\mathcal{I}_{\mathcal{C}}\ar[r]\ar[d] &\mathcal{O}_{\mathbb{P}^{3}}\ar[r]\ar[d] &\mathcal{O}_{\mathcal{C}}\ar[r]\ar[d] &0  \\
 0\ar[r]& E\ar[d]\ar[r]& \mathcal{O}_{\mathbb{P}^{3}}^{2}\ar[r]\ar[d]& \mathcal{Q}\ar[r]\ar[d]& 0 \\
0\ar[r] &\mathcal{I}_{\mathcal{Z}}\ar[r]\ar[d] &\mathcal{O}_{\mathbb{P}^{3}}\ar[d]\ar[r] &\mathcal{Z}\ar[r]\ar[d] & 0 \\
 & 0& 0&0 &  \\
}
\end{equation}
where  $\mathcal{O}_{\mathcal{C}}:=\im(s),$ $\mathcal{I}_{\mathcal{C}}:=\ker(s)$ and $\mathcal{Z}:=\coker(s).$
By \cite[Corollary 5]{JMT} it follows that, the rank $2$ instanton sheaf $E$ belongs to $\ext^{1}(\mathcal{I}_{\mathcal{Z}},\mathcal{I}_{\mathcal{C}}),$ where $\mathcal{I}_{\mathcal{C}}$ is an ideal sheaf of a subscheme in of  pure dimension $1$ and $\mathcal{I}_{\mathcal{Z}}$ is ideal sheaf of a zero-dimensional subscheme in $\mathbb{P}^{3}.$ Furthermore, if $E$ is $\mathbb{T}-$fixed, then so are $\mathcal{I}_{\mathcal{Z}}$ and $\mathcal{I}_{\mathcal{C}},$ since $\mathcal{Z}$ and $\mathcal{C}$ are fixed. Finally, $\mathcal{C}$ and $Supp(\mathcal{Z})$ are supported on $Sing(E),$ and by item ${\rm (b)},$ of Theorem \ref{strict-torsion-free}, $Sing(E)$ has vacuous intersection with the framing line $l_{\infty},$ so the result follows.
\end{itemize}
\end{proof}

\vspace{0.5cm}

We end this section with the following 

\begin{theorem}\label{Euler-char}
The Euler characteristic of $\mathcal{M}_{\mathbb{P}^{3}}(c)$ is $\chi(\mathcal{M}_{\mathbb{P}^{3}}(c))=0, \quad \forall c>0.$ 

Moreover if $c=1,$ then the Poincar\'e Polynomial of $\mathcal{M}_{\mathbb{P}^{3}}(1)$ is $$\mathcal{P}_{\mathcal{M}_{\mathbb{P}^{3}}(1)}(t)=\sum_{i=0}^{13}(1-\delta_{1,i}-\delta_{12,i})t^{i}$$
\end{theorem}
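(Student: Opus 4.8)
The plan is to prove the two assertions separately, since they rely on different techniques: the vanishing of the Euler characteristic follows from a general principle about torus actions, while the Poincar\'e polynomial for $c=1$ requires an explicit analysis of the fixed locus.

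\textbf{Vanishing of the Euler characteristic.} First I would invoke the standard localization principle: for a variety $M$ carrying an action of a torus $\mathbb{T}$, one has $\chi(M)=\chi(M^{\mathbb{T}})$, where $M^{\mathbb{T}}$ is the fixed locus. Thus it suffices to show that $\chi(\mathcal{M}^{\mathbb{T}}_{\mathbb{P}^{3}}(c))=0$. To exploit this, I would not use the full three-dimensional torus $\mathbb{T}$ all at once, but rather a generic one-parameter subgroup $\lambda\colon\mathbb{C}^{\ast}\to\mathbb{T}$ whose fixed locus coincides with that of $\mathbb{T}$; then $\chi(\mathcal{M}_{\mathbb{P}^{3}}(c))=\chi(\mathcal{M}^{\lambda}_{\mathbb{P}^{3}}(c))$. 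The key observation is that the fixed locus $\mathcal{M}^{\mathbb{T}}_{\mathbb{P}^{3}}(c)$ itself still carries a residual free (or at least fixed-point-free) $\mathbb{C}^{\ast}$-action. Indeed, by the lemma in Section \ref{T-action} we have $J_{0}=J_{1}=0$ on the fixed locus, and by Theorem \ref{strict-torsion-free} the matrices $A_{i},B_{i}$ are nilpotent; the framing data $I_{0},I_{1}$ remain nontrivial. The residual torus $\tilde{T}\subset GL(W)$ acting by $I_{i}\mapsto I_{i}e^{-1}$ (with $J_{i}=0$) then acts on $\mathcal{M}^{\mathbb{T}}_{\mathbb{P}^{3}}(c)$, and because the scaling of the framing cannot be absorbed by $GL(V)$ when $c>0$ (the double dual being the trivial bundle $\mathcal{O}^{\oplus 2}_{\mathbb{P}^{3}}$ forces a genuine dependence on $I$), this action has no fixed points. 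Applying localization a second time then gives $\chi(\mathcal{M}^{\mathbb{T}}_{\mathbb{P}^{3}}(c))=\chi(\emptyset)=0$ for $c>0$.

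\textbf{The Poincar\'e polynomial for $c=1$.} For $c=1$ the space $V$ is one-dimensional, so $A_{i},B_{i}\in\End(V)=\mathbb{C}$ are scalars, and nilpotency from Theorem \ref{strict-torsion-free} forces $A_{0}=A_{1}=B_{0}=B_{1}=0$. Together with $J_{0}=J_{1}=0$, the only surviving data are $I_{0},I_{1}\in\Hom(W,V)=\mathbb{C}^{2}$, modulo the residual $GL(V)=\mathbb{C}^{\ast}$ and the framing torus. I would thus identify $\mathcal{M}^{\mathbb{T}}_{\mathbb{P}^{3}}(1)$ explicitly, recognizing the quotient $\mathcal{Q}$ as a pure length-type sheaf on the line $l_{0}$ and the associated PT-stable pair via the diagram \eqref{ideal-ext}. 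Once the fixed locus is identified, I would compute the full Poincar\'e polynomial of $\mathcal{M}_{\mathbb{P}^{3}}(1)$ by describing the moduli space directly: the given answer $\sum_{i=0}^{13}(1-\delta_{1,i}-\delta_{12,i})t^{i}$ indicates a smooth $13$-dimensional space whose Betti numbers are all $1$ except that $b_{1}=b_{12}=0$. I would match this against the known geometry — likely realizing $\mathcal{M}_{\mathbb{P}^{3}}(1)$ as a tower or fibration over $\mathbb{P}^{3}$ or a related toric/Grassmannian-type base whose cohomology produces precisely this pattern — and verify the Betti numbers by the Bia\l ynicki-Birula decomposition coming from the $\mathbb{T}$-action, reading off cell dimensions from the weights in \eqref{torus-action}.

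\textbf{The main obstacle.} The delicate point is establishing that the residual $\tilde{T}$-action on $\mathcal{M}^{\mathbb{T}}_{\mathbb{P}^{3}}(c)$ is genuinely fixed-point-free for all $c>0$; one must rule out the possibility that rescaling the framing $I_{i}\mapsto I_{i}e^{-1}$ is compensated by a $GL(V)$-change of basis, which would create spurious fixed points and break the localization argument. This requires using the structure forced by Theorem \ref{strict-torsion-free} — in particular the nilpotency of $A_{i},B_{i}$ and the triviality of the double dual — to show that the framing vectors $I_{0}(W),I_{1}(W)$ cannot be simultaneously eigenvectors of a compensating $g\in GL(V)$ with a nontrivial scaling weight. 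For the $c=1$ case, the computation itself is routine once the fixed locus is pinned down, so there the challenge is instead the correct global identification of $\mathcal{M}_{\mathbb{P}^{3}}(1)$ and the bookkeeping of Bia\l ynicki-Birula cell dimensions that yields the stated vanishing of $b_{1}$ and $b_{12}$ by Poincar\'e duality.
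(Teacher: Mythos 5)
Your proposal takes a genuinely different route from the paper, and both halves of it have real gaps. For orientation: the paper's proof never localizes. It observes that forgetting the framing exhibits $\mathcal{M}_{\mathbb{P}^{3}}(c)$ as an ${\rm Sl}(2,\mathbb{C})$-principal bundle over the unframed moduli $\overline{\mathcal{I}}(c)$; since ${\rm Sl}(2,\mathbb{C})$ is special in Serre's sense, this bundle is Zariski-locally trivial, so the Poincar\'e polynomial is multiplicative, $\mathcal{P}_{\mathcal{M}_{\mathbb{P}^{3}}(c)}(t)=\mathcal{P}_{\overline{\mathcal{I}}(c)}(t)\,(1+t^{3})$, and setting $t=-1$ kills the factor $1+t^{3}$, giving $\chi=0$ for every $c>0$; the $c=1$ polynomial then drops out of the identification $\overline{\mathcal{I}}(1)\cong\mathbb{P}^{5}$ of Jardim--Markushevich--Tikhomirov.

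\emph{First gap.} Your whole Euler-characteristic argument rests on the claim that the residual $\tilde{T}$-action on $\mathcal{M}^{\mathbb{T}}_{\mathbb{P}^{3}}(c)$ is fixed-point-free, and what you offer in support is a heuristic, not a proof; worse, the claim is \emph{false} for the moduli space as the paper literally defines it, namely $\mathcal{V}^{\rm st}_{\mathbb{P}^{3}}(c,r)/G$. Counterexample at $c=1$: the datum $A_{i}=B_{i}=0$, $J_{i}=0$, $I_{0}=(1,0)$, $I_{1}=(0,0)$ satisfies the ADHM equations, is ${\rm st}$-stable, and is fixed by the full torus $\mathbb{T}\times\tilde{T}$ --- compensate $t$ by $g_{t}=t_{2}\in GL(V)$ and $e={\rm diag}(e_{1},e_{2})$ by $g=e_{1}^{-1}$. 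So the scaling of the framing \emph{can} be absorbed by $GL(V)$ exactly where you need it not to be. The claim only becomes true after restricting to globally weak stable data, i.e.\ honest torsion-free sheaves (the datum above is excluded because $I_{P}=0$ at $P=[0:1]$), and there the correct argument looks nothing like your eigenvector sketch: a framed sheaf fixed by the framing torus admits, for every $e$, a compensating automorphism which extends to $E^{\ast\ast}=\mathcal{O}_{\mathbb{P}^{3}}^{\oplus2}$, hence is a constant matrix forced by the framing to be ${\rm diag}(e_{1},e_{2})$ (up to a fixed conjugation); this makes $E$ split as $\mathcal{I}_{Z_{1}}\oplus\mathcal{I}_{Z_{2}}\subset\mathcal{O}_{\mathbb{P}^{3}}^{\oplus2}$, and one must then rule out rank-$1$ summands: $0$-dimensional pieces of $Z_{i}$ give $h^{0}(\mathcal{O}_{Z_{i}}(-2))\neq0$, while a $\mathbb{T}$-invariant multiple structure $\mathcal{C}$ on $l_{0}$ has $\chi(\mathcal{O}_{\mathcal{C}}(-2))\leq -c<0$ by the formula in Theorem \ref{multiple-class}, hence $h^{1}(\mathcal{O}_{\mathcal{C}}(-2))\neq0$, violating the instanton conditions. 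None of this splitting-plus-exclusion argument, which is the entire content of the statement, appears in your proposal.

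\emph{Second gap.} The $c=1$ assertion is not proved at all: the ``matching against known geometry'' is deferred, the guessed base $\mathbb{P}^{3}$ is wrong (the relevant structure is the ${\rm Sl}(2,\mathbb{C})$-bundle over $\overline{\mathcal{I}}(1)\cong\mathbb{P}^{5}$, and that identification is an external theorem any proof must invoke), and the fallback method is unworkable in principle. The stated polynomial is $(1+t^{2}+\cdots+t^{10})(1+t^{3})$: it has odd Betti numbers $b_{3}=b_{5}=\cdots=b_{11}=b_{13}=1$ and vanishing Euler characteristic. Localization only ever outputs Euler characteristics, and a Bia\l ynicki--Birula decomposition indexed by torus-fixed points cannot produce this answer: such a decomposition requires fixed points with all flow limits existing (unavailable here --- $\mathcal{M}_{\mathbb{P}^{3}}(1)$ is not complete, and your own first half asserts the full-torus fixed locus is empty), and with isolated fixed points it would force $\chi>0$, contradicting $\chi=0$. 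The odd classes come precisely from the $H^{3}$ of the ${\rm Sl}(2,\mathbb{C})$ fibre, which is structurally invisible to fixed-point methods; this is why the paper's bundle-theoretic argument, rather than any localization, is what actually proves both statements.
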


Before giving a proof, we recall now some useful definitions mostly from \cite{Serre};

\begin{definition}
Let $Y$ be an algebraic space endowed with a right (or left) action of a group $G,$ and let $\pi:Y\longrightarrow X$ be a morphism from $Y$ to the algebraic space $X.$ We call the triple $(G,Y,X)$ (or just $Y$) a fibered system if $\pi$ satisfies $\pi(x\cdot g)=\pi(x)$ for all $x\in X$ and $g\in G.$  
\end{definition}

A fibered system $Y$ is called \emph{locally trivial} (\emph{resp. locally isotrivial}) if for every Zariski open $U\subset X,$ the restriction $Y|_{U},$ of $Y$ on $U,$ is isomorphic to $U\times G,$ with the endowed operations $(x,g)g'=(x,gg')$ and the canonical projection $U\times G\longrightarrow U$ (\emph{resp.} if for every open $U\subset X$ there is an unramified morphism $f:U'\longrightarrow U$ over $U$ such that the inverse image $f^{-1}Y|_{U},$ of $Y|_{U},$ is trivial). $Y$ is called \emph{trivial} if $Y$ is isomorphic to $X\times G.$ 

A group $G$ is called \emph{special} if every locally isotrivial fibered system $(G,Y,X)$ is locally trivial. Finally, an isotrivial fibered system $(G,Y,X)$ is a called a \emph{$G-$principal fibration.} If, moreover, the morphism $\pi$ is flat and $(G,Y,X)$ is locally isotrivial, then $(G,Y,X)$ (or just $Y$) is a called \emph{$G-$principal bundle}.

\begin{proof}[proof of Theorem \ref{Euler-char}]

Let $\mathcal{I}(c)$ is the moduli space of rank $2$ locally free instantons on $\p3$ (without framing). We first remark that, for all $c>0,$ $\mathcal{M}_{\mathbb{P}^{3}}(c)$ is an ${\rm Sl}(2,\mathbb{C})-$bundle over $\overline{\mathcal{I}}(c),$ where the projection is given by forgetting the framing. Since the group ${\rm Sl}(2,\mathbb{C})$ is special \cite{Grothendieck1}, in the sense above, we have that every $G-$principal bundle is locally trivial in the Zariski topology. In particular $\mathcal{M}_{\mathbb{P}^{3}}(c)\to\overline{\mathcal{I}}(c)$ is a locally trivial ${\rm Sl}(2,\mathbb{C})-$principal bundle. Hence, one can write the Poincar\'e polynomial of $\mathcal{M}_{\mathbb{P}^{3}}(c)$ as$^{\ast}$ $$\mathcal{P}_{\mathcal{M}_{\mathbb{P}^{3}}(c)}(t)=\mathcal{P}_{\overline{\mathcal{I}}(c)}(t)\times\mathcal{P}_{{\rm Sl}(2,\mathbb{C})}(t),$$
and since ${\rm Sl}(2,\mathbb{C})\approxeq_{diff}{\rm SU}(2)\times\mathbb{R}^{3},$ one gets $\mathcal{P}_{{\rm Sl}(2,\mathbb{C})}(t)=1+t^{3}.$ By putting $t=-1,$ it follows that $\chi(\mathcal{M}_{\mathbb{P}^{3}}(c))=0.$

\footnotetext[1]{For the multiplicative property of the Poincar\'e polynomial the reader might see \cite[Introduction]{Brion}, for instance.}

In \cite[Section 6]{JMT2}, the authors prove that $\overline{\mathcal{I}}(1)\cong\p5.$ Hence, for $c=1$ 
the Poincar\'e polynomial is computed from the product formula.

\end{proof}

\section{Relation with multiple structures}\label{Multiple structures}

In this section we explore the relation of the rank $0$ instanton sheaves and sheaves on multiple structures \cite{Vatne, Drezet1, Drezet2, Nollet}. This allows us to give a concrete description in the lower charge cases $c=1,2,$ as well as, the multiple primitive cases (see section \ref{sheaves-multi-struc}). Moreover, we use such a description to compute the Euler Characteristic of $\mathcal{M}_{\mathbb{P}^{3}}(1).$ We also give a lower bound on the number of irreducible components.

\subsection{Monomial multiple structures}\label{multi-struc}
 
Most of the material in this subsection is borrowed from \cite{Vatne}, with the assumption that the ambient space is $\mathbb{P}^{3}.$
Let $i:X=\mathbb{P}^{1}\to\mathbb{P}^{3}$ be a linear subspace with saturated ideal $\mathcal{I}_{X},$ $X^{(i)}\subset\mathbb{P}^{3}$ the $i$'th infinitesimal neighborhood of $X,$ with ideal $(\mathcal{I}_{X})^{i+1},$ and $Y$ a Cohen-Macaulay multiple structure with $Y_{{\rm red}}=X,$ whose ideal is generated by monomials. Then the following filtration of $Y$ exists;
\begin{equation}\label{Y-filt}
X=Y_{0}\subset Y_{1}\subset \cdots \subset Y_{k-1}\subset Y_{k}=Y; \hspace{1cm} Y_{i}=Y\cap X^{(i)}, 
\end{equation}
for some $k,$ and every term $Y_{i}$ is also Cohen-Macaulay since $X$ is a Cohen-Macaulay curve \cite[Corollary 2.6]{Vatne2}. If $\mathcal{I}_{i}$ is the ideal sheaf of $Y_{i},$ then there are two short exact sequences
\begin{displaymath}
0\to\mathcal{I}_{i+1}/\mathcal{I}_{X}\mathcal{I}_{i}\to\mathcal{I}_{i}/\mathcal{I}_{X}\mathcal{I}_{i}\to\mathcal{L}_{i}\to0;
\end{displaymath}
and
\begin{displaymath}
0\to i_{\ast}\mathcal{L}_{i}\to\mathcal{O}_{Y_{i+1}}\to\mathcal{O}_{Y_{i}}\to0.
\end{displaymath}
The first exact sequence define the locally free $\mathcal{O}_{X}-$modules $\mathcal{L}_{j},$ see \cite{Nollet}, and references therein, for more details. 

One important result that will be used is the following$^{\dagger}:$

\footnotetext[2]{We only need, for our purpose, this restricted version of the more general result proved by Vatne.}

\begin{proposition}\cite[Proposition 1]{Vatne} 
There is a bijective (inclusion reversing) correspondence between Cohen-Macaulay monomial ideals in two variables and Young diagrams. Under this bijection, the number of boxes in the Young diagram is the multiplicity of the scheme defined by the corresponding ideal and whose reduced structure is a fixed line in $\mathbb{P}^{3}.$   
\end{proposition}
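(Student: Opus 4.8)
The plan is to set up the correspondence explicitly and then verify both directions together with the multiplicity claim. After restricting to the line $X = \mathbb{P}^1 \hookrightarrow \mathbb{P}^3$ with reduced structure, I would first reduce to a purely local/algebraic problem. Choose coordinates so that $X$ is cut out by $z_2 = z_3 = 0$; then $\mathcal{I}_X = (z_2, z_3)$ and the relevant monomials live in the two variables $z_2, z_3$, with the line coordinates $z_0, z_1$ playing a passive role. A Cohen-Macaulay multiple structure $Y$ supported on $X$ whose ideal is generated by monomials corresponds, after passing to the local ring along the generic point of $X$ (or equivalently working in the polynomial ring $k[z_2, z_3]$ obtained by inverting $z_0$ or $z_1$), to a monomial ideal $J \subset k[z_2, z_3]$. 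So the first step is to argue that the Cohen-Macaulay and monomial hypotheses let us encode $Y$ faithfully by such a $J$, using the fact that $X$ is smooth and the structure is uniform along $X$.

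Next I would establish the bijection with Young diagrams. A monomial ideal $J$ in two variables is determined by its \emph{staircase}: the set of monomials $z_2^a z_3^b \notin J$ forms a finite order ideal (a ``staircase'' region) in $\mathbb{Z}_{\geq 0}^2$, and this finite order ideal is exactly the data of a Young diagram (partition). The inclusion-reversing nature is immediate from this description: a larger ideal $J$ removes more monomials, hence leaves a smaller order ideal, i.e.\ a smaller Young diagram. The correspondence is bijective because a monomial ideal is uniquely recovered from the set of standard monomials $k[z_2,z_3]/J$, and conversely every order ideal (Young diagram) determines a monomial ideal via its complement. The key point to check here is that the Cohen-Macaulay condition is exactly what forces the complement to be a genuine Young diagram (a partition shape) rather than an arbitrary finite monomial set — this pins down that $J$ has no embedded components, so that $Y$ is pure of dimension $1$.

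Finally, for the multiplicity statement, I would identify the multiplicity of $Y$ along $X$ with the length of $k[z_2,z_3]/J$, which counts precisely the standard monomials, i.e.\ the number of boxes in the Young diagram. Concretely, the multiplicity is the generic rank of $\mathcal{O}_Y$ as an $\mathcal{O}_X$-module, and via the filtration \eqref{Y-filt} together with the exact sequences defining the $\mathcal{O}_X$-modules $\mathcal{L}_i$, one sees $\sum_i \mathrm{rk}(\mathcal{L}_i)$ equals the number of standard monomials. The main obstacle I anticipate is not the combinatorics but the first reduction step: justifying cleanly that every Cohen-Macaulay monomial multiple structure along $X$ is genuinely captured by a single monomial ideal in the two transverse variables, uniform along the line, with no loss of information and with the Cohen-Macaulay hypothesis corresponding exactly to the partition (staircase) shape. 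Since the statement is quoted as a restricted special case of Vatne's general theorem, I would lean on \cite{Vatne} for this local-to-global uniformity and concentrate the verification on the explicit staircase/Young-diagram dictionary and the length-counts-boxes identity.
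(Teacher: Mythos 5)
First, a point of comparison: the paper itself offers no proof of this statement --- it is quoted as \cite[Proposition 1]{Vatne} and used as a black box --- so your proposal can only be judged on its own merits rather than against an internal argument of the paper.

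Your combinatorial core is sound: the standard monomials of a finite-colength monomial ideal $J\subset\mathbb{C}[z_2,z_3]$ form the boxes of a Young diagram, the correspondence is inclusion-reversing, and the multiplicity of the scheme along the line equals the generic rank of $\mathcal{O}_Y$ over $\mathcal{O}_X$, i.e.\ $\dim_{\mathbb{C}}\mathbb{C}[z_2,z_3]/J$, which is the number of boxes. The genuine gap is in where you place the Cohen--Macaulay hypothesis. You claim that CM ``is exactly what forces the complement to be a genuine Young diagram (a partition shape) rather than an arbitrary finite monomial set.'' This is false as stated: for \emph{any} monomial ideal in two variables the set of standard monomials is closed under division (if a monomial lies outside $J$, so does every divisor of it), hence a finite complement is automatically a partition shape; no hypothesis at all is used at that step. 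The real work done by the CM assumption is precisely the step you explicitly defer to \cite{Vatne}: showing that a Cohen--Macaulay multiple structure $Y$ on the line whose homogeneous ideal $I_Y\subset S=\mathbb{C}[z_0,z_1,z_2,z_3]$ is generated by monomials (in all four variables, a priori) must in fact have $I_Y$ generated by monomials in $z_2,z_3$ alone, uniformly along the line. Deferring that step to \cite{Vatne} is circular, since \cite[Proposition 1]{Vatne} is exactly the statement being proved. Fortunately the missing argument is short and should be included: since $Y$ is Cohen--Macaulay, $S/I_Y$ has no embedded primes, and since $Y$ is supported on the line, $(z_2,z_3)$ is its unique associated prime, so $I_Y$ is $(z_2,z_3)$-primary; now if $z_0^{a}z_1^{b}z_2^{c}z_3^{d}\in I_Y$ with $(a,b)\neq(0,0)$, then $z_0^{a}z_1^{b}\notin\sqrt{I_Y}=(z_2,z_3)$, so primarity forces $z_2^{c}z_3^{d}\in I_Y$; hence $I_Y$ is generated by its monomials in $z_2,z_3$. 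With that lemma inserted in place of the appeal to \cite{Vatne}, the rest of your proposal (staircase bijection, inclusion reversal, multiplicity count) does complete the proof.
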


For instance, if we choose $\mathcal{I}_{X}=<z_2,z_3> \subset S:=\mathbb{C}[z_0,z_1,z_2,z_3],$ The Cohen-Macaulay monomial ideal $J:=<z^{3}_{2},z^{2}_{2}z^{2}_{3},z^{3}_{3}>$ will corresponds to the diagram
\begin{center}
\ytableausetup{mathmode}
\begin{ytableau}
\none[z^{3}_{2}] & \none & \none &\none\\
 &  & \none[z^{2}_{2}z^{2}_{3}] &\none \\
 &  &  &\none\\
1 &  & & \none[z^{3}_{3}]
\end{ytableau}
\end{center}

The number of boxes being $8,$ we have that $J$ is an ideal of a Cohen-Macaulay multiplicity $8$ structure on the line $X.$ Remark that the line $X$ itself corresponds to the box  \ytableausetup{mathmode, boxsize=1em}\begin{ytableau}1 \end{ytableau}.

\begin{definition} \hspace{10cm}
\begin{itemize}
\item An \emph{inner box} of a Young diagram will mean a box not in the diagram but such that the box bellow it and the box in its left are both in the diagram.   

\item An \emph{outer box} of a Young diagram will mean a box not in the diagram and such that the box bellow it and the box in its left are both outside the diagram, but its lower left angle touches a box in the diagram.

\end{itemize}
\end{definition}

\begin{example} In the diagram associated to $J:=<z^{3}_{2},z^{2}_{2}z^{2}_{3},z^{3}_{3}>,$ above, the red box is \emph{inner}, while the green box is \emph{outer};
\begin{center}
\ytableausetup{mathmode}
\begin{ytableau}
\none& \none & \none &\none\\
\quad &\quad  & *(red) &\none \\
\quad & \quad & \quad &\none\\
1 & \quad &\quad & \none
\end{ytableau} \hspace{2cm} \ytableausetup{mathmode}
\begin{ytableau}
\none& \none & *(green) &\none\\
 \quad&\quad  & \none &\none \\
 \quad& \quad & \quad &\none\\
1 &  \quad&\quad & \none
\end{ytableau}
\end{center}
\end{example}

\vspace{0.5cm}

\begin{proposition}\cite[Proposition 4]{Vatne} 
Given a Cohen-Macaulay monomial ideal $I$ with support on a line in $\mathbb{P}^{3},$ and its corresponding Young diagram $T.$ Then $I$ fits in the exact sequence $$0 \to\bigoplus_{j} \mathcal{O}_{\p3}(-n_{2i})\to\bigoplus_{i} \mathcal{O}_{\p3}(-n_{1i})\to I\to 0,$$ where $n_{1i}$ is the weight of the $i'$th inner box and $n_{2j}$ is the weight of the $j'$th inner box, for some chosen indexing $i,$ (\emph{resp.} $j$) of inner boxes (\emph{resp.} outer boxes)in $T.$
\end{proposition}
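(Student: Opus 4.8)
The plan is to produce an explicit two-term free resolution by writing down the two maps combinatorially from the Young diagram $T$ and then checking exactness directly. First I would fix the dictionary: a Cohen-Macaulay monomial ideal $I$ supported on the line $X=Z(z_2,z_3)$ is generated by monomials in the two transverse variables $z_2,z_3$ only (by the CM/monomial hypothesis and the bijection of the previous Proposition), so $I=(z_2^{a_0}z_3^{b_0},\dots,z_2^{a_s}z_3^{b_s})$ where the exponent pairs trace out the staircase boundary of $T$. The \emph{inner boxes} of $T$ are exactly the minimal generators: an inner box at position $(p,q)$ corresponds to the monomial $z_2^{p}z_3^{q}$ of weight $n_{1i}=p+q$, whose box-below and box-left lie in $T$, i.e.\ $z_2^{p}z_3^{q}\notin I$ but $z_2^{p+1}z_3^{q},\,z_2^{p}z_3^{q+1}\in I$. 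I would verify that these are precisely a minimal monomial generating set, which fixes the rank and the twists of the free module $\bigoplus_i\mathcal{O}_{\p3}(-n_{1i})$ surjecting onto $I$.

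Next I would construct the syzygy module. For monomial ideals in two variables the syzygies are the classical Koszul-type relations between \emph{consecutive} staircase generators: if $m_i=z_2^{p_i}z_3^{q_i}$ and $m_{i+1}=z_2^{p_{i+1}}z_3^{q_{i+1}}$ are adjacent minimal generators (so $p_{i+1}<p_i$ and $q_{i+1}>q_i$ along the staircase), their least common multiple is $z_2^{p_i}z_3^{q_{i+1}}$, and this lcm is exactly an \emph{outer box} of $T$ of weight $n_{2j}=p_i+q_{i+1}$. The relation $\frac{\mathrm{lcm}}{m_{i+1}}\,e_{i+1}-\frac{\mathrm{lcm}}{m_i}\,e_i=0$ then gives one generator of the syzygy module of degree $n_{2j}$. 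I would assemble these into the map $\bigoplus_j\mathcal{O}_{\p3}(-n_{2j})\to\bigoplus_i\mathcal{O}_{\p3}(-n_{1i})$, matching outer boxes to consecutive pairs of inner boxes.

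The remaining, and genuinely substantive, step is \emph{exactness}: that the displayed complex
$$
0 \to\bigoplus_{j} \mathcal{O}_{\p3}(-n_{2j})\to\bigoplus_{i} \mathcal{O}_{\p3}(-n_{1i})\to I\to 0
$$
is a resolution, with no higher syzygies. I expect this to be the main obstacle, but it is controlled: since $I$ is generated in only two of the four variables, one can reduce to the ideal $\bar I\subset\mathbb{C}[z_2,z_3]$ it generates in the two-variable polynomial ring and use flatness of $\mathbb{C}[z_0,z_1]$ to pull the resolution back up to $S=\mathbb{C}[z_0,z_1,z_2,z_3]$. Over a two-variable polynomial ring the Auslander–Buchsbaum formula together with the CM (hence codimension-two, depth-$0$ quotient) condition forces the projective dimension of $S/I$ to be $2$, so $\bar I$ has projective dimension exactly $1$ and the resolution terminates after one syzygy module. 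It then remains to confirm that the consecutive-generator relations I wrote down \emph{generate} the whole syzygy module and are themselves free of relations; for monomial ideals in two variables this is the standard fact that the Taylor/Lyubeznik complex collapses to the adjacent-pairs complex, which I would verify by a short degree count comparing the number of inner boxes, outer boxes, and the Hilbert function of $I$ read off directly from the box count of $T$. The numerical identity (number of outer boxes $=$ number of inner boxes $-1$, reflecting the staircase) is the bookkeeping that makes the alternating sum of twists match $\chi(I(m))$, confirming there are no missing terms.
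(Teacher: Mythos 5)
The paper never proves this proposition: it is imported wholesale from Vatne (\cite[Proposition 4]{Vatne}), and the only thing following it in the text is a worked example. So there is no internal argument to compare yours against; judged on its own, your proposal takes the standard route (staircase generators, adjacent syzygies, Hilbert--Burch-type exactness) and its global structure is sound: inner boxes are the minimal monomial generators, outer boxes are the least common multiples of staircase-adjacent generators, the resolution is built over $\mathbb{C}[z_{2},z_{3}]$, where Auslander--Buchsbaum gives projective dimension $1$ for $\bar I$ (the quotient is Artinian because the diagram has finitely many boxes), and flat base change to $\mathbb{C}[z_{0},z_{1},z_{2},z_{3}]$ followed by sheafification yields the sequence on $\p3$.

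Two local repairs are needed. First, your translation of the inner-box condition is wrong as written: you characterize an inner box by ``$z_{2}^{p}z_{3}^{q}\notin I$ but $z_{2}^{p+1}z_{3}^{q},\,z_{2}^{p}z_{3}^{q+1}\in I$,'' which picks out the maximal standard monomials (the socle of $\mathbb{C}[z_{2},z_{3}]/\bar I$), not the generators; for $I=(z_{2}^{2},z_{2}z_{3},z_{3}^{2})$ your condition selects the two boxes $z_{2}$ and $z_{3}$, while $I$ has three minimal generators. The correct condition for a minimal generator $m$ is $m\in I$, $m/z_{2}\notin I$, $m/z_{3}\notin I$: the box itself lies outside the diagram while its left and lower neighbours lie inside, which is exactly the paper's definition of inner box. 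This is a transcription slip rather than a conceptual one, since your subsequent staircase bookkeeping uses the correct picture. Second, the collapse of the Taylor complex to the adjacent-pairs complex is not something a degree count can prove; it follows from the divisibility $\mathrm{lcm}(m_{k},m_{k+1})\mid\mathrm{lcm}(m_{i},m_{j})$ for $i\le k<j$ (immediate from the monotonicity of the staircase exponents), which lets every pairwise syzygy telescope into a combination of adjacent ones. Alternatively, you can bypass the collapse entirely, and this is where your degree count genuinely earns its keep: the adjacent-syzygy matrix is bidiagonal, hence injective; the inner-box monomials give surjectivity onto $I$; and the Hilbert-function identity (boxes of each weight versus the alternating sum of twists) then forces the middle homology, having zero Hilbert function, to vanish. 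Either patch completes the argument. Note also that the statement as printed carries a typo --- $n_{2j}$ is the weight of the $j$'th \emph{outer} box --- which your proof correctly reads through.
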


This way, the syzygies correspond to the outer boxes.

\begin{example} For the ideal $I$ corresponding to the Young diagram
\begin{center}
\ytableausetup{mathmode}
\begin{ytableau}
\quad & \none & \none  & \none \\
\quad & \quad & \quad &\none \\
 \quad & \quad & \quad & \quad
\end{ytableau}
\end{center}
\end{example}
one has four inner boxes with weights $n_{11}=3,n_{12}=3,n_{13}=4,n_{14}=4,$ and three outer boxes with weights $n_{21}=4,n_{22}=5,n_{23}=5.$ Hence, one gets; 
$$0\to \mathcal{O}_{\p3}(-4)\oplus \mathcal{O}_{\p3}(-5)^{\oplus 2}\to \mathcal{O}_{\p3}(-4)^{\oplus 2}\oplus \mathcal{O}_{\p3}(-3)^{\oplus 2}\to I\to 0,$$

\vspace{0.5cm}

\begin{theorem}\label{multiple-class}
For a $\mathbb{T}-$fixed stable $0$ instanton pair $(\mathcal{Q},s)$ of charge $c;$
\begin{itemize}
\item[(i)] the associated scheme $\mathcal{C}$ is a multiple structure that corresponds to a Young diagram $T$ of weight $c.$ Moreover if the Young diagram is of the form$^{\ddagger}$ $\nu=(\nu_{1}\geq\nu_{2}\geq\cdots\geq\nu_{k})$ of $c,$ then $\mathcal{C}$ has Hilbert polynomial $$\chi_{\mathcal{C}}(m):=\chi(\mathcal{O}_{\mathcal{C}}(m))=cm+3c-\sum_{i=1}^{k}\frac{\nu_{i}(\nu_{i}+2i+1)}{2},$$ and $\mathcal{I}_{\mathcal{C}}$ is a smooth point in its Hilbert scheme of closed subschemes of $\p3.$ The dimension of the Hilbert scheme, of subschemes of $\p3,$ at $\mathcal{I}_{\mathcal{C}}$ is given by
\begin{align}
D_{\mathcal{I}_{\mathcal{C}}}=&\sum_{n_{2j}\geq n_{1i}}\binom{n_{2j}-n_{1i}+3}{3}+\sum_{n_{1i}\geq n_{2j}}\binom{n_{1i}-n_{2j}+3}{3} \notag \\
&-\sum_{n_{2j}\geq n_{2i}}\binom{n_{2j}-n_{2i}+3}{3}-\sum_{n_{1j}\geq n_{1i}}\binom{n_{1j}-n_{1i}+3}{3}+1. \notag
\end{align}
\item[(ii)] Moreover, the associated sheaf $\mathcal{Z}:=\coker(s)$ has length $$l_{\mathcal{Z}}=\frac{1}{2}\sum_{i=1}^{k}\nu^{2}_{i}+\sum_{i=1}^{k}i\nu_{i}-\frac{c}{2},$$ where $\nu=(\nu_{1}\geq\nu_{2}\geq\cdots\geq\nu_{k})$ is the partition of $c,$ represented by the Young diagram $T$ associated to the multiple structure $\mathcal{C}.$
\end{itemize}
\end{theorem}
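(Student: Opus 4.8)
The plan is to reduce both parts to the combinatorics of the Young diagram and then read off the two formulas by Euler-characteristic bookkeeping. First I would record the structural facts: the scheme $\mathcal{C}$ cut out by $\mathcal{I}_{\mathcal{C}}=\ker(s)$ in \eqref{ideal-ext} is pure of dimension $1$ in $\p3$, and for a $1$-dimensional scheme purity is equivalent to being Cohen-Macaulay, so $\mathcal{C}$ is a CM curve. Since $E$ (hence $\mathcal{Q}$, hence $\mathcal{C}$) is $\mathbb{T}$-fixed, the ideal $\mathcal{I}_{\mathcal{C}}$ is invariant under the torus scaling the coordinates and is therefore a monomial ideal; by Theorem~\ref{strict-torsion-free}(b) its reduced support is the line $l_{0}=\{z_{2}=z_{3}=0\}$. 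Thus $\mathcal{C}$ is exactly a CM monomial multiple structure on $l_{0}$, and \cite[Proposition 1]{Vatne} attaches to it a Young diagram $T$, in the transverse variables $z_{2},z_{3}$, whose number of boxes equals the multiplicity of $\mathcal{C}$. Finally, the right-hand column $0\to\mathcal{O}_{\mathcal{C}}\to\mathcal{Q}\to\mathcal{Z}\to 0$ of \eqref{ideal-ext}, with $\mathcal{Z}$ zero-dimensional, shows $\mathcal{O}_{\mathcal{C}}$ and $\mathcal{Q}$ share the same leading Hilbert coefficient; as $P_{\mathcal{Q}}(m)=cm+2c$, that multiplicity is $c$, so $T$ has weight $c$ and encodes a partition $\nu=(\nu_{1}\geq\cdots\geq\nu_{k})$ of $c$.

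Next I would compute $\chi_{\mathcal{C}}(m)$ directly from $T$. By \cite[Proposition 1]{Vatne} the ideal is extended from a monomial ideal in $\mathbb{C}[z_{2},z_{3}]$, so the degree-$m$ piece of $\mathcal{O}_{\mathcal{C}}=S/\mathcal{I}_{\mathcal{C}}$ has basis the monomials $z_{0}^{c_{0}}z_{1}^{c_{1}}z_{2}^{a}z_{3}^{b}$ with $(a,b)$ a box of $T$ and $c_{0}+c_{1}=m-(a+b)$. Hence for $m\gg 0$ one gets $\chi_{\mathcal{C}}(m)=\sum_{(a,b)\in T}\bigl(m-(a+b)+1\bigr)=cm+c-\sum_{(a,b)\in T}(a+b)$. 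Indexing rows of $T$ by the power $i-1$ of $z_{2}$, row $i$ contributes $\nu_{i}$ boxes of weights $(i-1),(i-1)+1,\dots,(i-1)+(\nu_{i}-1)$, so $\sum_{(a,b)\in T}(a+b)=\sum_{i}\bigl(\nu_{i}(i-1)+\binom{\nu_{i}}{2}\bigr)=\tfrac12\sum_{i}\nu_{i}^{2}+\sum_{i}i\nu_{i}-\tfrac{3c}{2}$. Substituting yields exactly $\chi_{\mathcal{C}}(m)=cm+3c-\sum_{i}\tfrac{\nu_{i}(\nu_{i}+2i+1)}{2}$.

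For smoothness and the dimension, I would work with the graded resolution $0\to F_{1}\to F_{0}\to\mathcal{I}_{\mathcal{C}}\to 0$ of \cite[Proposition 4]{Vatne}, where $F_{0}=\bigoplus_{i}\mathcal{O}_{\p3}(-n_{1i})$ is indexed by the inner boxes and $F_{1}=\bigoplus_{j}\mathcal{O}_{\p3}(-n_{2j})$ by the outer boxes. The tangent space at $[\mathcal{I}_{\mathcal{C}}]$ is $\Hom(\mathcal{I}_{\mathcal{C}},\mathcal{O}_{\mathcal{C}})$ and the obstruction sits in $\ext^{1}(\mathcal{I}_{\mathcal{C}},\mathcal{O}_{\mathcal{C}})$. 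Applying $\Hom(-,\mathcal{O}_{\mathcal{C}})$ to the resolution and combining with $0\to\mathcal{I}_{\mathcal{C}}\to\mathcal{O}_{\p3}\to\mathcal{O}_{\mathcal{C}}\to 0$, the computation converts into self-$\Hom$'s of the resolution terms; using $\dim\Hom(\mathcal{O}_{\p3}(-a),\mathcal{O}_{\p3}(-b))=\binom{a-b+3}{3}$ for $a\geq b$ and $\dim\Hom(\mathcal{O}_{\mathcal{C}},\mathcal{O}_{\mathcal{C}})=1$, it collapses to $1-\chi(\mathcal{I}_{\mathcal{C}},\mathcal{I}_{\mathcal{C}})=\dim\Hom(F_{1},F_{0})+\dim\Hom(F_{0},F_{1})-\dim\Hom(F_{1},F_{1})-\dim\Hom(F_{0},F_{0})+1$, which is precisely the displayed $D_{\mathcal{I}_{\mathcal{C}}}$. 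The step I expect to be the main obstacle is proving $\ext^{1}(\mathcal{I}_{\mathcal{C}},\mathcal{O}_{\mathcal{C}})=0$, i.e. that $[\mathcal{I}_{\mathcal{C}}]$ is unobstructed so that the tangent dimension really equals $D_{\mathcal{I}_{\mathcal{C}}}$; this should follow from the explicit monomial resolution together with the cylindrical, $z_{0},z_{1}$-free nature of $\mathcal{I}_{\mathcal{C}}$, but it is the one point where the combinatorics must be controlled rather than handled formally. As a reassurance, for $\mathcal{C}=l_{0}$ (so $\nu=(1)$, with Koszul resolution $0\to\mathcal{O}_{\p3}(-2)\to\mathcal{O}_{\p3}(-1)^{\oplus 2}\to\mathcal{I}_{l_{0}}\to 0$) the formula returns $D=4=\dim\mathrm{Gr}(2,4)$, as it must.

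Part (ii) is then immediate: from $0\to\mathcal{O}_{\mathcal{C}}\to\mathcal{Q}\to\mathcal{Z}\to 0$ with $\mathcal{Z}$ zero-dimensional, the length is constant in $m$ and equals $l_{\mathcal{Z}}=P_{\mathcal{Q}}(m)-\chi_{\mathcal{C}}(m)$. Inserting $P_{\mathcal{Q}}(m)=cm+2c$ and the polynomial from part (i), the $m$-terms cancel and $l_{\mathcal{Z}}=\sum_{i}\tfrac{\nu_{i}(\nu_{i}+2i+1)}{2}-c=\tfrac12\sum_{i}\nu_{i}^{2}+\sum_{i}i\nu_{i}-\tfrac{c}{2}$, as claimed.
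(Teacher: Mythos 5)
Your reduction of $\mathcal{C}$ to a Cohen--Macaulay monomial multiple structure on $l_{0}$ with Young diagram of weight $c$ is exactly the paper's argument (torus invariance plus Theorem \ref{strict-torsion-free} plus \cite[Proposition 1]{Vatne}), your box-by-box monomial count of $\chi_{\mathcal{C}}(m)$ is correct and in fact more self-contained than the paper, which simply cites \cite[Corollary 2]{Vatne}, and part (ii) is handled the same way as in the paper. The genuine gap is in the smoothness-and-dimension claim. The paper gets both in one stroke from \cite[Corollary 1]{Vatne}, which is precisely the statement that a monomial Cohen--Macaulay multiple structure is an unobstructed point of the Hilbert scheme with tangent dimension $D_{\mathcal{I}_{\mathcal{C}}}$; you try to rederive this, and the derivation breaks. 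The identity you invoke, $\dim\Hom(\mathcal{I}_{\mathcal{C}},\mathcal{O}_{\mathcal{C}})=1-\chi(\mathcal{I}_{\mathcal{C}},\mathcal{I}_{\mathcal{C}})=D_{\mathcal{I}_{\mathcal{C}}}$, is false in general: the displayed $D_{\mathcal{I}_{\mathcal{C}}}$ sums only over \emph{nonnegative} weight differences (genuine $\Hom$'s), while the Euler pairing also carries the $\ho^{3}$-contributions $\chi(\mathcal{O}_{\p3}(d))=\binom{d+3}{3}<0$ for $d\leq-4$, and these do not cancel. Concretely, for the primitive support $\mathcal{I}_{\mathcal{C}}=\langle z_{2},z_{3}^{5}\rangle$ (the partition $\nu=(5)$, $c=5$), with resolution $0\to\mathcal{O}_{\p3}(-6)\to\mathcal{O}_{\p3}(-1)\oplus\mathcal{O}_{\p3}(-5)\to\mathcal{I}_{\mathcal{C}}\to0$, the truncated sums give $D_{\mathcal{I}_{\mathcal{C}}}=60+0-1-37+1=23$, which is the geometrically correct dimension (these curves are plane quintics, a family of dimension $3+20=23$), whereas $1-\chi(\mathcal{I}_{\mathcal{C}},\mathcal{I}_{\mathcal{C}})=20$, because $\chi(\mathcal{O}_{\p3}(-4))=-1$ and $\chi(\mathcal{O}_{\p3}(-5))=-4$ enter the pairing. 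So no Euler-characteristic bookkeeping can ``collapse'' to the truncated formula; even on the most charitable reading one must actually compute $\Hom(\mathcal{I}_{\mathcal{C}},\mathcal{O}_{\mathcal{C}})=\ker\bigl(\bigoplus_{i}\ho^{0}(\mathcal{O}_{\mathcal{C}}(n_{1i}))\to\bigoplus_{j}\ho^{0}(\mathcal{O}_{\mathcal{C}}(n_{2j}))\bigr)$ and control the cokernel of that map, which is what Vatne does.

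Worse, the step you flag as the ``main obstacle,'' proving $\ext^{1}(\mathcal{I}_{\mathcal{C}},\mathcal{O}_{\mathcal{C}})=0$, is not merely hard: it is false, so your plan for smoothness cannot be completed as stated. In the same example the local-to-global spectral sequence gives an inclusion $\ho^{1}(\mathcal{H}om(\mathcal{I}_{\mathcal{C}},\mathcal{O}_{\mathcal{C}}))\hookrightarrow\ext^{1}(\mathcal{I}_{\mathcal{C}},\mathcal{O}_{\mathcal{C}})$, and here $\mathcal{H}om(\mathcal{I}_{\mathcal{C}},\mathcal{O}_{\mathcal{C}})=N_{\mathcal{C}}=\mathcal{O}_{\mathcal{C}}(1)\oplus\mathcal{O}_{\mathcal{C}}(5)$ with $\ho^{1}(\mathcal{O}_{\mathcal{C}}(1))=\mathbb{C}^{3}\neq0$ (indeed $\chi(\mathcal{O}_{\mathcal{C}}(1))=0$ by your own Hilbert polynomial, while $h^{0}(\mathcal{O}_{\mathcal{C}}(1))=3$). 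The point $[\mathcal{C}]$ is nevertheless a smooth point: the obstruction \emph{space} is nonzero but the obstructions vanish, and establishing that requires an actual argument, not an $\ext^{1}$ vanishing. The honest repair is to do what the paper does, namely quote \cite[Corollary 1]{Vatne} for smoothness and for $D_{\mathcal{I}_{\mathcal{C}}}$, while keeping your direct computations of $\chi_{\mathcal{C}}(m)$ and of $l_{\mathcal{Z}}$, which are correct.
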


\footnotetext[3]{The Young diagram is associated to a partition $\nu$ of $c,$ where the $i'$th column represents the $i'$th part $\nu_{i},$ $i=1,\cdots,k.$  }

\begin{proof}
\begin{itemize}
\item[(i)]For a $\mathbb{T}-$fixed stable $0$ instanton pair $(\mathcal{Q},s)$ the schematic support $\mathcal{C}$ of $\mathcal{Q}$ should also be invariant. By Theorem \ref{strict-torsion-free}, it follows that it is a multiple structure on the unique line that does not intersect the framing line in $\p3.$ Hence its ideal $\mathcal{I}_{\mathcal{C}}$ should be generated by monomials.
The Hilbert polynomial $\chi_{\mathcal{C}}(m)$ can be computed according to \cite[Corollary 2]{Vatne} and using the fact that the weight of a box $(i,j)$ is given by $w_{i,j}=i+j-2.$ The dimension $D_{\mathcal{I}_{\mathcal{C}}},$ of the Hilbert scheme of subschemes of $\p3$ at $\mathcal{C},$ follows from \cite[Corollary 1]{Vatne}.
\item[(ii)] The statement about $\mathcal{Z}$ follows from item {\rm (i)} and the exact sequence $$0\to\mathcal{O}_{\mathcal{C}}\to\mathcal{Q}\to\mathcal{Z}\to0.$$
\end{itemize}

\end{proof}

The above result classifies all scheme theoretic supports of the $\mathbb{T}-$fixed stable $0$ instanton pair $(\mathcal{Q},s).$


\subsection{Sheaves on multiple structures}\label{sheaves-multi-struc}
After classifying the possible schematic supports of the pair, we will now study the sheaf $\mathcal{Q},$ emanating from the $\mathbb{T}-$fixed stable pair $(\mathcal{Q},s),$ as a sheaf on the monomial double structure $\mathcal{C}$ defined over the line $l_{0}=(z_{2}=z_{3}=0).$ For instance, we give a complete description of $\mathcal{Q}$ for small charges, namely $c=1,2$ and $3.$ In order to achieve this goal we first recall some results from \cite{Drezet1}, \cite{Drezet2}.

For $X=l_{0}\subset Y\subset \p3,$ as in $\S$ \ref{multi-struc}, with a filtration \eqref{Y-filt} we say that $Y$ is \emph{primitive} if for every $x\in X,$ there exists a surface $S$ of $\p3$ which is smooth at $x$ and containing a neighborhood of $x$ in $Y.$ In this case, $L=\mathcal{I}_{X}/\mathcal{I}_{Y_{2}}$ is an invertible sheaf on $X$ and we have $\mathcal{I}_{Y_{i}}/\mathcal{I}_{Y_{i+1}}=L^{i}$ for $1\leq i\leq c.$ This means that for a point $x\in X,$ there are elements $z_{2},z_{3},t,$ of the maximal ideal $m_{X,x}$ of $x$ in $\mathcal{O}_{X,x},$ such that their images in $m_{X,x}/m^{2}_{X,x}$ form a basis and for all $1\leq i\leq c$ one has $\mathcal{I}_{Y_{i},x}=<z_{2},z^{i}_{3}>.$
Let $\mathcal{F}$ be a coherent sheaf over $Y$
\begin{definition}
The {\rm first canonical filtration} of $\mathcal{F}$ is the filtration
$$\mathcal{F}_{c+1}=0\subset\mathcal{F}_{c}\subset\cdots\subset\mathcal{F}_{2}\subset\mathcal{F}_{1}=\mathcal{F};$$ such that, for $1\leq i\leq c,$ $\mathcal{F}_{i+1}$ is inductively defined as the kernel of the restriction morphism $\mathcal{F}_{i}\to\mathcal{F}_{i}|_{X}.$

\end{definition}
In this way one has $\mathcal{F}_{i}/\mathcal{F}_{i+1}=\mathcal{F}_{i}|_{X}$ and $\mathcal{F}/\mathcal{F}_{i+1}=\mathcal{F}|_{Y_{i}}.$ The graded object ${\rm Gr}(\mathcal{F})=\bigoplus_{i=1}^{c}\mathcal{F}_{i}/\mathcal{F}_{i+1}$ is then an $\mathcal{O}_{X}-$module$^{\dagger\dagger}.$
\footnotetext[8]{A \emph{second canonical filtration}, that we won't use, is also defined in \cite[\S 4]{Drezet1}. The interested reader might check the given reference.}

Some properties of these filtrations can be listed as it follows \cite[\S 3]{Drezet2};
\begin{itemize}
    \item For the ideal $\mathcal{I}_{X},$ of $X,$ in $\mathcal{O}_{Y}$ and a coherent sheaf $\mathcal{F},$ over $Y,$ one has $\mathcal{F}_{i}=\mathcal{I}_{X}^{i}\mathcal{F}$ so that ${\rm Gr}(\mathcal{F})=\bigoplus_{i=0}^{c-1}\mathcal{I}_{X}^{i}\mathcal{F}/\mathcal{I}_{X}^{i+1}\mathcal{F};$
    
    \item $\mathcal{F}_{i}=0$ if and only if $\mathcal{F}$ is a sheaf over $Y_{i};$
    
    \item for each $0\leq i\leq c,$ $\mathcal{F}_{i}$ is a coherent sheaf over $Y_{i}$ with first canonical filtration $0\subset\mathcal{F}_{c}\subset\cdots\subset\mathcal{F}_{i+1}\subset\mathcal{F}_{i};$
    
    \item  Morphisms of coherent sheaves $\mathcal{F}\to\mathcal{G},$ on $Y,$ induce morphisms of first canonical filtrations $\mathcal{F}_{i}\to\mathcal{G}_{i},$ for all $0\leq i\leq c,$ and hence induce morphisms of the graded objects ${\rm Gr}(\mathcal{F})\to{\rm Gr}(\mathcal{G}).$
\end{itemize}

\begin{definition} \hspace{10cm} 
\begin{itemize}
\item {\rm The generalised rank} is defined by the integer $R(\mathcal{F})=rk({\rm Gr}(\mathcal{F})).$
\item {\rm The generalised degree} is defined by the integer $Deg(\mathcal{F})=deg({\rm Gr}(\mathcal{F})).$
\end{itemize}
\end{definition}

The generalised rank and degree are defined so that they behave additively on exact sequence on $Y.$ In general the usual rank and degree fail to satisfy this condition. Moreover we have the following generalised Riemann-Roch Theorem:
\begin{theorem}\cite[{\bf Theorem 4.2.1}]{Drezet1}\label{Euler}
For a coherent sheaf $\mathcal{F},$ over $Y,$ we have $$\chi(\mathcal{F})=Deg(\mathcal{F})+R(\mathcal{F})(1-g_{Y}).$$
\end{theorem}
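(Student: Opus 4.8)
The plan is to reduce the generalised Riemann--Roch formula on the thickening $Y$ to the ordinary Riemann--Roch theorem on its reduced support $X = Y_{\rm red} \cong \mathbb{P}^1$, using the first canonical filtration as the bridge. First I would invoke the filtration
$$0 = \mathcal{F}_{c+1} \subset \mathcal{F}_c \subset \cdots \subset \mathcal{F}_2 \subset \mathcal{F}_1 = \mathcal{F}$$
together with the recorded property that each successive quotient $\mathcal{F}_i/\mathcal{F}_{i+1} = \mathcal{F}_i|_X$ is a genuine coherent $\mathcal{O}_X$-module. Since the Euler characteristic is additive along the short exact sequences $0\to\mathcal{F}_{i+1}\to\mathcal{F}_i\to\mathcal{F}_i/\mathcal{F}_{i+1}\to 0$, telescoping yields
$$\chi(\mathcal{F}) = \sum_{i=1}^{c}\chi(\mathcal{F}_i/\mathcal{F}_{i+1}) = \chi\bigl({\rm Gr}(\mathcal{F})\bigr),$$
the last equality because ${\rm Gr}(\mathcal{F})=\bigoplus_{i=1}^c \mathcal{F}_i/\mathcal{F}_{i+1}$ and $\chi$ of a direct sum is the sum of the $\chi$'s. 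Note that the Euler characteristic of each piece may be computed interchangeably on $X$, on $Y$, or on $\p3$, since the closed immersions are exact and preserve $\chi$.

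The second step is to apply the classical Riemann--Roch theorem to the $\mathcal{O}_X$-module ${\rm Gr}(\mathcal{F})$ on the smooth curve $X$. For any coherent sheaf $G$ on a smooth projective curve of genus $g$ one has $\chi(G)=\deg(G)+rk(G)(1-g)$, valid even for sheaves with torsion once $\deg$ and $rk$ are taken in the standard sense (split off the torsion subsheaf, which is locally free on a smooth curve). Taking $G = {\rm Gr}(\mathcal{F})$ and feeding in the definitions $Deg(\mathcal{F})=\deg({\rm Gr}(\mathcal{F}))$ and $R(\mathcal{F})=rk({\rm Gr}(\mathcal{F}))$ gives
$$\chi\bigl({\rm Gr}(\mathcal{F})\bigr) = Deg(\mathcal{F}) + R(\mathcal{F})(1-g_Y),$$
where the genus controlling the correction term is that of the reduced support $X$, which for the primitive multiple curves under consideration is the invariant written $g_Y$ (and equals $0$ in our application, since $X\cong\mathbb{P}^1$). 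Combining the two displays proves the theorem.

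I expect the only genuinely delicate point to be the genus bookkeeping. The naive temptation is to insert the \emph{arithmetic} genus of the thickening $Y$, but that is wrong: computing $\chi(\mathcal{O}_Y)$ via the same filtration shows $\chi(\mathcal{O}_Y)=\binom{c}{2}\deg(L)+c(1-g_X)$, so the arithmetic genus of $Y$ differs from $g_X$ in general. The correct statement is that the Riemann--Roch correction is governed by the genus of the \emph{reduced} curve $X$, precisely because every graded piece $\mathcal{F}_i/\mathcal{F}_{i+1}$ lives on $X$ and it is ordinary Riemann--Roch on $X$ that is being applied term by term. Once this identification is made, the remaining ingredients---additivity of $\chi$ over a finite filtration and the identities $\mathcal{F}_i/\mathcal{F}_{i+1}=\mathcal{F}_i|_X$---are either standard or already among the listed properties of the first canonical filtration, so no further obstruction arises.
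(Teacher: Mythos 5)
This theorem is quoted from Dr\'ezet \cite{Drezet1} and the paper itself supplies no proof, but your argument is correct and is essentially the original one: additivity of $\chi$ along the first canonical filtration reduces the statement to classical Riemann--Roch applied to the $\mathcal{O}_X$-module ${\rm Gr}(\mathcal{F})$ on the reduced curve. Your genus bookkeeping is also the right reading of the statement: the correction term involves the genus of the reduced support $X\cong\mathbb{P}^{1}$ (so the $g_{Y}$ of the formula is $0$ here), not the arithmetic genus of the thickening $Y$ --- for instance, for the primitive triple structure one has $\chi(\mathcal{O}_{\mathcal{C}})=0$, $Deg(\mathcal{O}_{\mathcal{C}})=-3$, $R(\mathcal{O}_{\mathcal{C}})=3$, which forces $g=0$ in the formula even though $p_{a}(\mathcal{C})=1$.
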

Here, $g_{Y}$ is the genus of the curve $Y.$

\vspace{0.5cm}

\subsubsection{Stable rank $0$ instanton pair of charge $1$}\label{charge1}

In this case the only possible support is the line $l_{0},$ the line that does not intersect the framing line $l_{\infty}.$ 
The sheaf $\mathcal{Q}$ sits in the short exact sequence $$0\to\mathcal{O}_{l_{0}}\to\mathcal{Q} \to\mathcal{Z}\to0,$$ where $\mathcal{Z}$ is the structure sheaf of one point. Hence the only possibilities are $\mathcal{Q}=\mathcal{O}_{l_{0}}(p_{i}),$ $i=0,1$ Where $p_{0}=[1;0;0;0]$ and $p_{1}=[0;1;0;0].$ We point out that the rank $2$ fixed instanton bundles given by $\ker(\mathcal{O}_{\p3}^{2}\to\mathcal{O}_{l_{0}}(p_{i}))$ are nullcorrelation sheaves \cite{Ein}. Moreover, we see that these $\mathbb{T}-$fixed points are isolated.

\begin{corollary}
The moduli $\mathcal{M}_{\mathbb{P}^{3}}(1)$ has only one fixed point under the lifted toric action on $\p3.$ 
\end{corollary}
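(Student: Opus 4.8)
The plan is to show that the fixed locus $\mathcal{M}^{\mathbb{T}}_{\mathbb{P}^3}(1)$ consists of exactly one point by combining the classification of the support from Theorem \ref{multiple-class} with the explicit description of $\mathcal{Q}$ obtained just above the statement. For $c=1$ the discussion preceding the corollary already isolates two candidate fixed sheaves, $\mathcal{Q}=\mathcal{O}_{l_0}(p_0)$ and $\mathcal{Q}=\mathcal{O}_{l_0}(p_1)$, corresponding to the section $s$ having its cokernel $\mathcal{Z}$ supported at $p_0=[1;0;0;0]$ or at $p_1=[0;1;0;0]$. The key point of the corollary is therefore not to re-derive these two candidates, but to argue that they give rise to the \emph{same} point of the moduli space, so that there is genuinely one fixed point and not two.

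First I would invoke Theorem \ref{multiple-class}(i) with $c=1$: the only Young diagram of weight $1$ is the single box, so the support $\mathcal{C}$ must be the reduced line $l_0=(z_2=z_3=0)$, and by Theorem \ref{multiple-class}(ii) the length of $\mathcal{Z}=\coker(s)$ is $l_{\mathcal{Z}}=\tfrac12(1)^2+1\cdot 1-\tfrac12=1$. Thus $\mathcal{Z}$ is a single reduced point lying on $l_0$ and fixed by $\mathbb{T}$; the only $\mathbb{T}$-fixed points on $l_0$ are $p_0$ and $p_1$. This pins down $\mathcal{Q}$ to be one of $\mathcal{O}_{l_0}(p_0)$ or $\mathcal{O}_{l_0}(p_1)$, recovering the two candidates from the preamble.

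Next I would identify these two candidates inside the moduli space. The cleanest route is to pass through the ADHM data: for $c=1$ the space $V$ is one-dimensional, so $A_0,A_1,B_0,B_1$ are scalars which, by Theorem \ref{strict-torsion-free}(b), must be nilpotent, hence all zero; the fixed datum is then determined by $I_0,I_1\in\Hom(W,V)=\Hom(\mathbb{C}^2,\mathbb{C})$ up to the $G=GL(1)$-action and the stability condition of Theorem \ref{sheaf-stability}. The two choices $\mathcal{Q}=\mathcal{O}_{l_0}(p_i)$ correspond to the two rank-$2$ framings, but the interchange $z_0\leftrightarrow z_1$ (equivalently $p_0\leftrightarrow p_1$) is realised by an element of the framing torus $\tilde T\subset GL(W)$ together with the residual gauge freedom, so the two data lie in a single $G$-orbit and represent one point of $\mathcal{M}_{\mathbb{P}^3}(1)$. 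I would then note that this is consistent with Theorem \ref{Euler-char}: for $c=1$ one has $\overline{\mathcal{I}}(1)\cong\p5$, whose $\mathbb{T}$-fixed locus is a finite set, and the $\mathrm{Sl}(2,\mathbb{C})$-bundle structure together with the remark that these fixed points are isolated forces exactly one fixed point.

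The main obstacle is the identification step: one must check carefully that the two sheaves $\mathcal{O}_{l_0}(p_0)$ and $\mathcal{O}_{l_0}(p_1)$ determine the \emph{same} framed instanton up to the equivalence defining $\mathcal{M}_{\mathbb{P}^3}(1)$, i.e.\ that there is a genuine isomorphism of framed pairs swapping them, rather than two distinct fixed points. This is where the precise bookkeeping of the framing isomorphism $\phi$ and the action \eqref{torus-action} matters, and it is the one place where a naive count would erroneously yield two points. Everything else is a direct specialisation of the already-established general theory to $c=1$.
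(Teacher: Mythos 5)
Your overall strategy mirrors the paper's: specialise Theorem \ref{multiple-class} to $c=1$ to conclude that the support is the reduced line $l_0$ and $\mathcal{Z}=\coker(s)$ is a single reduced $\mathbb{T}$-fixed point, so $\mathcal{Q}\in\{\mathcal{O}_{l_0}(p_0),\mathcal{O}_{l_0}(p_1)\}$, and then identify the two resulting quotients as one point of the moduli space. The first half is fine and agrees with the paper's preamble. The gap is in the identification step, which you yourself flag as the crux. You claim the two ADHM data are related by ``an element of the framing torus $\tilde T\subset GL(W)$ together with the residual gauge freedom, so the two data lie in a single $G$-orbit.'' This is false. For $c=1$, after Theorem \ref{strict-torsion-free}(b) one has $A_i=B_i=0$, $J_i=0$, and the two candidates are $(I_0,I_1)=((1,0),(0,1))$ and $(I_0,I_1)=((0,1),(1,0))$, i.e.\ the quotient maps $(z_0,z_1)$ and $(z_1,z_0)\colon\mathcal{O}_{\p3}^{\oplus2}\to\mathcal{O}_{l_0}(1)$. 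The gauge group is $G=GL(V)=\mathbb{C}^{*}$, acting here by simultaneous scaling $(I_0,I_1)\mapsto(\lambda I_0,\lambda I_1)$, and no scaling interchanges the two data: they lie in distinct $G$-orbits. The transformation that does interchange them is the permutation matrix in $GL(W)$ (which, incidentally, is not even in the diagonal torus $\tilde T$); but $GL(W)$ acts by changing the framing, and the moduli space $\mathcal{M}_{\p3}(c)=\mathcal{V}^{\rm st}_{\p3}(c)/G$ is a quotient by $G$ alone — data related by a nontrivial framing change are in general \emph{distinct} points of a framed moduli space. So, taken literally, your argument shows only that the two candidates have isomorphic underlying unframed sheaves, not that they coincide in $\mathcal{M}_{\p3}(1)$.

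The paper's own proof does not attempt any gauge-orbit computation: it observes that the two quotient maps $\mathcal{O}_{\p3}^{\oplus2}\twoheadrightarrow\mathcal{O}_{l_0}(p_i)$ have isomorphic kernels $E_0\cong E_1$ (both are the nullcorrelation-type sheaf $\ker(\mathcal{O}_{\p3}^{\oplus2}\to\mathcal{O}_{l_0}(1))$) and counts the single isomorphism class $E$ — that is, it identifies the two candidates at the level of the underlying sheaf, precisely the statement your argument actually supports, rather than asserting the (false) $G$-equivalence of ADHM data on which your write-up rests. Your closing consistency check does not repair the gap either: knowing $\overline{\mathcal{I}}(1)\cong\p5$ and that $\mathcal{M}_{\p3}(1)\to\overline{\mathcal{I}}(1)$ is an ${\rm Sl}(2,\mathbb{C})$-bundle says nothing, by itself, about how many $\mathbb{T}$-fixed points lie in the fibre over a fixed nullcorrelation sheaf (a torus acting on an ${\rm Sl}(2,\mathbb{C})$-fibre can a priori have zero, finitely many, or a positive-dimensional family of fixed points). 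To fix the proof you should either argue, as the paper does, with the isomorphism class of the kernel sheaf, or else carefully spell out which torus (the base torus $\mathbb{T}$ alone, or $\mathbb{T}\times\tilde T$) and which equivalence of framings is being used in the definition of $\mathcal{M}^{\mathbb{T}}_{\p3}(c)$, since the count of fixed quotients genuinely depends on that bookkeeping.
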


\begin{proof}
Since the $\mathbb{T}-$fixed $0-$rank instantons can only be  $\mathcal{O}_{l_{0}}(p_{i}),$ $i=0,1,$ one gets two quotient maps $$\mathcal{O}_{\p3}^{\oplus2}\twoheadrightarrow\mathcal{O}_{l_{0}}(p_{i})\quad i=0,1,$$ with isomorphic kernels $E_{i},$ $i=0,1.$ These are fixed framed instanton sheaves and all of them belong to the the same class $E.$  Hence in this case there is only one isolated point.

\end{proof}

The next result gives the tangent and obstruction spaces at the fixed $0$ instanton pairs 

\begin{lemma}
For the stable pairs $\rho_{i}=(\mathcal{Q}_{i}=\mathcal{O}_{l_{0}}(p_{i}),s)\in P_{1}(\mathbb{P}^{3},\beta=H^{2})^{\mathbb{T}},$ $i=0,1,$ one has
$${\rm T}_{\rho_{i}}P_{1}(\mathbb{P}^{3},\beta=H^{2})^{\mathbb{T}}=\mathbb{C}^{5}, \qquad {\rm Obs}_{\rho_{i}}P_{1}(\mathbb{P}^{3},\beta=H^{2})^{\mathbb{T}}=\mathbb{C}^{3}.$$ Moreover 
\end{lemma}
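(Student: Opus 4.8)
The plan is to read the tangent and obstruction spaces off the Pandharipande--Thomas deformation theory of the pair and then pass to $\mathbb{T}$-fixed (weight-zero) subspaces. To $\rho_i=(\mathcal{Q}_i,s)$ I associate the two-term complex $I^\bullet_i=[\,\oh_{\p3}\xrightarrow{\,s\,}\mathcal{Q}_i\,]$ concentrated in degrees $0$ and $1$, which satisfies $\det I^\bullet_i\cong\oh_{\p3}$. The Zariski tangent space of $P_1(\p3,\beta=H^2)$ at $\rho_i$ is $\ext^1(I^\bullet_i,I^\bullet_i)_0$ and the obstruction space is $\ext^2(I^\bullet_i,I^\bullet_i)_0$, the trace-free parts. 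Since $\rho_i$ is $\mathbb{T}$-fixed each of these carries a $\mathbb{T}$-representation, and for the fixed locus one has $\mathrm{T}_{\rho_i}P_1(\p3,\beta)^{\mathbb{T}}=\ext^1(I^\bullet_i,I^\bullet_i)_0^{\mathbb{T}}$ and likewise for the obstruction, where the superscript denotes the invariant (weight-zero) summand. Thus everything reduces to an equivariant computation of $\ext^\bullet(I^\bullet_i,I^\bullet_i)_0$ followed by extraction of the invariant part.

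First I would unwind the self-$\ext$ of $I^\bullet_i$. Using the exact triangle $\mathcal{Q}_i[-1]\to I^\bullet_i\to\oh_{\p3}\xrightarrow{s}\mathcal{Q}_i$ and applying $\mathbf{R}\Hom(I^\bullet_i,-)$ together with $\mathbf{R}\Hom(-,\oh_{\p3})$ and $\mathbf{R}\Hom(-,\mathcal{Q}_i)$, the groups $\ext^\bullet(I^\bullet_i,I^\bullet_i)$ break into the four corner pieces $\ext^\bullet(\oh_{\p3},\oh_{\p3})$, $\ext^\bullet(\oh_{\p3},\mathcal{Q}_i)$, $\ext^\bullet(\mathcal{Q}_i,\oh_{\p3})$ and $\ext^\bullet(\mathcal{Q}_i,\mathcal{Q}_i)$, glued by connecting maps induced by $s$; the trace-free condition simply discards the $\ext^\bullet(\oh_{\p3},\oh_{\p3})=\ho^\bullet(\p3,\oh_{\p3})$ contribution. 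Each corner term is accessible. One has $\ext^\bullet(\oh_{\p3},\mathcal{Q}_i)=\ho^\bullet(\p3,\mathcal{Q}_i)=\ho^\bullet(l_0,\oh_{l_0}(1))$, concentrated in degree $0$. Since $l_0$ is a local complete intersection of codimension $2$ with normal bundle $N_{l_0/\p3}\cong\oh_{l_0}(1)^{\oplus2}$, the Koszul computation gives $\mathcal{E}xt^q(\mathcal{Q}_i,\mathcal{Q}_i)\cong\wedge^q N_{l_0/\p3}$ and $\mathcal{E}xt^q(\mathcal{Q}_i,\oh_{\p3})\cong\mathcal{Q}_i^{-1}\otimes\wedge^q N_{l_0/\p3}$, both vanishing outside $q=0,1,2$ (and only $q=2$ for the latter); the global $\ext$ groups then follow from the local-to-global spectral sequence.

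The genuinely new ingredient is to carry the $\mathbb{T}$-linearisation through every step. I would fix equivariant structures on $l_0\cong\pum$, on $\mathcal{Q}_i=\oh_{l_0}(p_i)$, and on the two normal directions $N_{l_0/\p3}\cong\oh_{l_0}(1)_{\chi_2}\oplus\oh_{l_0}(1)_{\chi_3}$, and then determine the $\mathbb{T}$-weight of every basis vector of each cohomology group by localising at the fixed points $p_0=[1;0;0;0]$ and $p_1=[0;1;0;0]$ and using the known tangent weights $\chi_1,\chi_2,\chi_3$ of $\p3$ there. All differentials in the long exact sequences and in the local-to-global spectral sequence are $\mathbb{T}$-equivariant, hence weight-preserving, so the invariant subspaces can be isolated term by term. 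Assembling these gives $\dim\ext^1(I^\bullet_i,I^\bullet_i)_0^{\mathbb{T}}=5$ and $\dim\ext^2(I^\bullet_i,I^\bullet_i)_0^{\mathbb{T}}=3$. Finally, the involution exchanging $z_0\leftrightarrow z_1$ swaps $p_0$ and $p_1$, hence $\rho_0$ and $\rho_1$, so it suffices to treat a single value of $i$.

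The main obstacle is precisely this weight bookkeeping: the dimensions $5$ and $3$ are \emph{not} the total dimensions of the relevant $\ext$ groups but their invariant parts, so the whole argument hinges on correctly assigning the characters $\chi_1,\chi_2,\chi_3$ to each generator and, above all, on pinning down the \emph{rank in weight zero} of the connecting maps induced by $s$ (for instance the component $\ext^\bullet(\mathcal{Q}_i,\mathcal{Q}_i)\to\ho^\bullet(\p3,\mathcal{Q}_i)$). A single miscounted weight would shift the answer to a neighbouring value, so the delicate step is to check that no spurious weight-zero vector is introduced and that each equivariant connecting map has the expected rank on the invariant part.
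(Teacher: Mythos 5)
Your starting point is the same as the paper's: the exact triangle $\mathcal{Q}_i[-1]\to I^{\bullet}\to\oh_{\p3}\to\mathcal{Q}_i$, the decomposition into the four corner groups, and your use of the trace-free self-Ext groups $\ext^{1}(I^{\bullet},I^{\bullet})_0$, $\ext^{2}(I^{\bullet},I^{\bullet})_0$ is equivalent (by Pandharipande--Thomas) to the paper's use of $\Hom(I^{\bullet},\mathcal{Q}_i)$ and $\ext^{1}(I^{\bullet},\mathcal{Q}_i)$. The genuine gap is the step where you pass to $\mathbb{T}$-invariant (weight-zero) subspaces and assert that these have dimensions $5$ and $3$. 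If you actually carried out the weight bookkeeping you outline, you would get $0$ for both. Indeed, from the six-term sequence the tangent space is an extension of $\ext^{1}(\mathcal{Q}_i,\mathcal{Q}_i)\cong\ho^{0}(N_{l_0/\p3})$ by $\ho^{0}(\mathcal{Q}_i)/\langle s\rangle$; normalizing the linearization of $\mathcal{Q}_i$ so that $s$ has weight zero, the five tangent weights are (up to a global sign convention) $\chi_2,\ \chi_2-\chi_1,\ \chi_3,\ \chi_3-\chi_1$ (moving the line $l_0$) and $\chi_1$ (moving the point $p_i$ along $l_0$), while the obstruction space is $\ext^{2}(\mathcal{Q}_i,\mathcal{Q}_i)\cong\ho^{0}(\det N_{l_0/\p3})$ with weights $\chi_2+\chi_3-\{0,\chi_1,2\chi_1\}$. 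None of these weights vanish, so $\rho_i$ is an \emph{isolated} fixed point and both invariant parts are zero --- consistent with the paper's own observation, just before the lemma, that the $\mathbb{T}$-fixed points of charge $1$ are isolated. So under your reading the claimed equalities are false, and no amount of careful bookkeeping can produce $5$ and $3$.

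What the lemma actually records (the superscript $\mathbb{T}$ in its statement notwithstanding) are the dimensions of the \emph{full} tangent and obstruction spaces of $P_{1}(\p3,H^{2})$ at the fixed point $\rho_i$, and that is what the paper computes: $\End(\mathcal{Q}_i)=\mathbb{C}$, $\ho^{0}(\mathcal{Q}_i)=\mathbb{C}^{2}$, $\ho^{1}(\mathcal{Q}_i)=0$, $\ext^{1}(\mathcal{Q}_i,\mathcal{Q}_i)=\mathbb{C}^{4}$, $\ext^{2}(\mathcal{Q}_i,\mathcal{Q}_i)=\mathbb{C}^{3}$, whence $\Hom(I^{\bullet},\mathcal{Q}_i)=\mathbb{C}^{5}$ and $\ext^{1}(I^{\bullet},\mathcal{Q}_i)\cong\ext^{2}(\mathcal{Q}_i,\mathcal{Q}_i)=\mathbb{C}^{3}$ from the long exact sequence, with no invariants taken anywhere. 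This reading is confirmed by the paper's closing remark that $5=\dim\mathbb{G}(2,4)+\dim\mathbb{P}(\ho^{0}(\mathcal{Q}_i))$, the dimension of the whole pairs space. A secondary but real defect: even granting your interpretation, your write-up contains no proof --- the decisive sentence ``assembling these gives $5$ and $3$'' is an assertion, and you yourself flag the weight assignments and the ranks of the equivariant connecting maps as unresolved. So the proposal both misidentifies the spaces to be computed and leaves the computation undone.
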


\begin{proof}

Recall from \cite{Panda-Thom} that we have a triangle 

\begin{align}
    \mathcal{Q}_{i}[-1] \to I^{\bullet}\to\mathcal{O}_{\p3}\to\mathcal{Q}_{i} \label{Tri1}
\end{align}
 
in $D^{b}(\p3),$ where $\mathcal{Q}=\mathcal{O}_{l_{0}}(p_{i}),$ $i=1,2,$ and $I^{\bullet}:=\{\mathcal{O}_{\p3}\to\mathcal{Q}\}.$

Applying $\Hom(-,\mathcal{Q})$ on \eqref{Tri1}, one gets the sequence

\begin{equation}\label{Long-Triangle}
    \xymatrix@C-1.2pc@R-2pc{
    \ext^{-1}(I^{\bullet},\mathcal{O}_{l_{0}}(p_{i}))\ar[r] & \End(\mathcal{O}_{l_{0}}(p_{i}))\ar[r] & \ext^{0}(\mathcal{O}_{\p3},\mathcal{O}_{l_{0}}(p_{i}))\ar[r]& \\ 
    \ext^{0}(I^{\bullet},\mathcal{O}_{l_{0}}(p_{i}))\ar[r]& \ext^{1}(\mathcal{O}_{l_{0}}(p_{i}),\mathcal{O}_{l_{0}}(p_{i}))\ar[r]& \ext^{1}(\mathcal{O}_{\p3},\mathcal{O}_{l_{0}}(p_{i}))\ar[r]& \\
    \ext^{1}(I^{\bullet},\mathcal{O}_{l_{0}}(p_{i}))\ar[r]&
    \ext^{2}(\mathcal{O}_{l_{0}}(p_{i}),\mathcal{O}_{l_{0}}(p_{i}))&&
    }
\end{equation}

where $\ext^{-1}(I^{\bullet},\mathcal{O}_{l_{0}}(p_{i}))=0,$ as in the proof of \cite[Lemma 1.5]{Panda-Thom}. Observe that $\chi(I^{\bullet},\mathcal{O}_{l_{0}}(p_{i}))=2,$ and $\chi(\mathcal{O}_{l_{0}}(p_{i}),\mathcal{O}_{l_{0}}(p_{i}))=4,$ by Hirzebruch-Riemann-Roch Theorem. One can easily compute that
{\small $$\End(\mathcal{O}_{l_{0}}(p_{i}))=\mathbb{C}, \hspace{0.3cm} \ext^{1}(\mathcal{O}_{l_{0}}(p_{i}),\mathcal{O}_{l_{0}}(p_{i}))=\mathbb{C}^{4}, \hspace{0.3cm} \ext^{2}(\mathcal{O}_{l_{0}}(p_{i}),\mathcal{O}_{l_{0}}(p_{i}))=\mathbb{C}^{3},$$} and also  

{\small
$$\ext^{1}(\mathcal{O}_{\p3},\mathcal{O}_{l_{0}}(p_{i}))\cong\ho^{1}(\mathcal{O}_{l_{0}}(p_{i}))=0,\hspace{0.3cm}\Hom(\mathcal{O}_{\p3},\mathcal{O}_{l_{0}}(p_{i}))\cong\ho^{0}(\mathcal{O}_{l_{0}}(p_{i}))=\mathbb{C}.$$ }
 
From \eqref{Long-Triangle}, it follows that $$\ext^{1}(I^{\bullet},\mathcal{O}_{l_{0}}(p_{i}))\cong\ext^{2}(\mathcal{O}_{l_{0}}(p_{i}),\mathcal{O}_{l_{0}}(p_{i}))\cong\mathbb{C}^{3}\textnormal{ and } 
\ext^{0}(I^{\bullet},\mathcal{O}_{l_{0}}(p_{i}))\cong\mathbb{C}^{5}$$

\end{proof}

The dimension of the tangent space at the pair $(\mathcal{Q},s)$ is $5$ as expected, since one has to choose a line in $\p3,$ thus a point in $\mathbb{G}(2,4),$ and a section in $\mathbb{P}^{1}=\mathbb{P}(\ho^{0}(\mathcal{Q})).$ Moreover, the dimension of the obstruction space does not jump, and hence the fixed locus is, in this case, smooth.

\subsubsection{Stable rank $0$ instanton pair of charge $2$}
For a $\mathbb{T}-$fixed stable rank $0$ instanton pair $(\mathcal{Q},s)$ of charge $2,$ the associated Cohen-Macaulay curve $\mathcal{C}$ is a primitive double curve with ideal generated by monomials, hence one can associate to it one of the following Young digrams:
\begin{center}
\ytableausetup{mathmode}
\begin{ytableau}
 \none[z_{2}] & \none & \none \\
 \quad & \quad & \none[z_{3}^{2}]
\end{ytableau} \hspace{2cm}
\ytableausetup{mathmode}
\begin{ytableau}
\none[z_{2}^{2}] & \none \\
\quad & \none \\
\quad & \none[z_{3}]
\end{ytableau}
\end{center}

We will only treat the case \ytableausetup{mathmode}
\begin{ytableau}
 \quad & \quad  
\end{ytableau}, the other case being very similar;
The ideal sheaf of $\mathcal{I}_{\mathcal{C}},$ of $\mathcal{C},$ in $\mathcal{O}_{\p3}$ is $\mathcal{I}_{\mathcal{C}}=<z_{2},z_{3}^{2}>,$ and $\mathcal{C}$ is clearly a complete intersection. Moreover, it is easy to see that we have $$0 \to\mathcal{O}_{\p3}(-3)\to \mathcal{O}_{\p3}(-2)\oplus \mathcal{O}_{\p3}(-1)\to \mathcal{I}_{\mathcal{C}}\to 0,$$ with Hilbert polynomial $\chi(m)=2m-1,$ so that $l_{\mathcal{Z}}=3.$ Using \cite[Lemma 1.3]{Nollet} one has a sequence$^{}$ $$0\to\mathcal{I}_{\mathcal{C}}\to\mathcal{I}_{l_{0}}\to L\cong\mathcal{O}_{l_{0}}(-1)\to0,$$ hence the restriction sequence $$0\to\mathcal{O}_{l_{0}}(-1)\to\mathcal{O}_{\mathcal{C}}\to \mathcal{O}_{l_{0}}\to0.$$
Thus the first canonical filtration of $\mathcal{O}_{\mathcal{C}}$ is simply $0\subset\mathcal{O}_{l_{0}}(-1)\subset\mathcal{O}_{\mathcal{C}},$ and the graded sheaf associated to it is ${\rm Gr}(\mathcal{O}_{\mathcal{C}})=\mathcal{O}_{l_{0}}\oplus\mathcal{O}_{l_{0}}(-1).$ This gives the generalised rank and degree, respectively, $R(\mathcal{O}_{\mathcal{C}})=2,$ $Deg(\mathcal{O}_{\mathcal{C}})=-1.$

\vspace{0.3cm}

$\mathcal{Q}$ has first canonical filtration $0\subset\mathcal{Q}_{2}\subset\mathcal{Q}$ with a graded object ${\rm Gr}(\mathcal{Q})=\mathcal{Q}|_{l_{0}}\oplus\mathcal{Q}_{2}.$ Thus, one obtains the diagram

\begin{equation}\label{rest-filt}
\xymatrix@R-1.3pc@C-1.3pc{
& 0\ar[d]& 0 \ar[d]& 0 \ar[d] & \\
0\ar[r]& \mathcal{O}_{l_{0}}(-1)\ar[d]\ar[r] & \mathcal{O}_{\mathcal{C}}\ar[r]\ar[d]& \mathcal{O}_{l_{0}}\ar[r]\ar[d] &0 \\
0\ar[r] & \mathcal{Q}_{2}\ar[d]\ar[r] & \mathcal{Q} \ar[d]\ar[r]& \mathcal{Q}|_{l_{0}} \ar[r]\ar[d]&0 \\
0\ar[r]& \mathcal{Z}_{1}\ar[r]\ar[d]& \mathcal{Z}\ar[r] \ar[d]& \mathcal{Z}_{2}\ar[d]\ar[r]&0 \\
&0&0&0&
}
\end{equation}
Thus $\mathcal{Q}_{2}=\mathcal{O}_{l_{0}}(-1),$ the generalised rank and degree of $\mathcal{Q}$ are, respectively,  $R(\mathcal{Q})=2$ and $Deg(\mathcal{Q})=2.$ This leaves us with the following possibility:

\begin{theorem}\label{type}
$\mathcal{Q}|_{l_{0}}\cong\mathcal{O}_{l_{0}}(1)\oplus T,$ where $T$ is a torsion sheaf of length $1.$
\end{theorem}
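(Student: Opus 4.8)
The plan is to analyze the restriction sheaf $\mathcal{Q}|_{l_0}$, which is a coherent sheaf on $l_0 \cong \mathbb{P}^1$, and determine its structure from the numerical data already computed. Since every coherent sheaf on a smooth curve splits as a direct sum of a locally free part and a torsion part, I would write $\mathcal{Q}|_{l_0} \cong \mathcal{V} \oplus T$ where $\mathcal{V}$ is a line bundle $\mathcal{O}_{l_0}(a)$ (the locally free part must have rank equal to the rank of $\mathcal{Q}|_{l_0}$, which I will argue is $1$) and $T$ is a torsion sheaf of some length $\ell$. The task then reduces to pinning down the integer $a$ and the length $\ell$.

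First I would extract the rank of the locally free part. From the diagram \eqref{rest-filt} and the computation $\mathcal{Q}_2 = \mathcal{O}_{l_0}(-1)$, the graded object is ${\rm Gr}(\mathcal{Q}) = \mathcal{Q}|_{l_0} \oplus \mathcal{O}_{l_0}(-1)$, and we know $R(\mathcal{Q}) = rk({\rm Gr}(\mathcal{Q})) = 2$. Since $\mathcal{O}_{l_0}(-1)$ contributes rank $1$, the locally free part of $\mathcal{Q}|_{l_0}$ must have rank $1$, so indeed $\mathcal{Q}|_{l_0} \cong \mathcal{O}_{l_0}(a) \oplus T$ with $T$ torsion. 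Next I would use the generalized degree: $Deg(\mathcal{Q}) = 2$ equals $deg({\rm Gr}(\mathcal{Q})) = deg(\mathcal{Q}|_{l_0}) + deg(\mathcal{O}_{l_0}(-1)) = (a + \ell) + (-1)$, giving $a + \ell = 3$. This is one equation in the two unknowns $a$ and $\ell$.

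The second equation comes from the cohomological instanton constraint together with purity, and this is where the main obstacle lies. I would use the condition $\ho^0(\mathbb{P}^3, \mathcal{Q}(-2)) = \ho^1(\mathbb{P}^3, \mathcal{Q}(-2)) = 0$ (the rank $0$ instanton condition recalled before Lemma \ref{stab-J}) restricted through the filtration. Twisting the middle row of \eqref{rest-filt} by $\mathcal{O}(-2)$ and using that $\mathcal{Q}_2 = \mathcal{O}_{l_0}(-1)$ gives $\mathcal{O}_{l_0}(-3)$, which has $\ho^0 = 0$ and $\ho^1 = \mathbb{C}$; feeding the vanishing $\ho^0(\mathcal{Q}(-2)) = \ho^1(\mathcal{Q}(-2)) = 0$ into the long exact sequence forces the cohomology of $\mathcal{Q}|_{l_0}(-2) = \mathcal{O}_{l_0}(a-2) \oplus T$ to be sharply constrained. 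The torsion $T$ contributes only to $\ho^0$, so I would track the connecting maps carefully to conclude that $\ho^0(\mathcal{O}_{l_0}(a-2)) = 0$ and $\ho^1(\mathcal{O}_{l_0}(a-2))$ matches the $\mathbb{C}$ coming from $\mathcal{O}_{l_0}(-3)$; this pins down $a = 1$, whence $\ell = 2 - a \cdot 0 \dots$ — more precisely $a + \ell = 3$ yields $\ell$ once $a$ is fixed.

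The delicate point, and the expected main obstacle, is disentangling the torsion contribution of $T$ from the connecting homomorphisms in the long exact cohomology sequence, since $T$ is concentrated in degree $0$ and can a priori absorb sections in a way that masks the value of $a$. I would resolve this by also invoking purity of $\mathcal{Q}$: although $\mathcal{Q}$ itself is pure of dimension $1$ on $\mathbb{P}^3$, its restriction $\mathcal{Q}|_{l_0}$ to the reduced line need not be, and the torsion $T$ precisely measures the failure of $\mathcal{Q}$ to be a line bundle along $l_0$. By comparing the length $l_{\mathcal{Z}} = 3$ of $\mathcal{Z} = \coker(s)$ computed above with the splitting $\mathcal{Z}$ into $\mathcal{Z}_1$ and $\mathcal{Z}_2$ in the bottom row of \eqref{rest-filt}, and balancing lengths, I would force $a = 1$ and $\ell = 1$, giving exactly $\mathcal{Q}|_{l_0} \cong \mathcal{O}_{l_0}(1) \oplus T$ with $T$ of length $1$, as claimed.
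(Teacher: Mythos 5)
Your strategy (split the restriction as $\mathcal{O}_{l_0}(a)\oplus T$ and pin down $(a,\ell)$ by generalized degree plus the instanton vanishing) is in spirit the same as the third case of the paper's proof, but your execution has a genuine gap, located exactly where you put the computational weight: you treat $\mathcal{Q}_2$ as known and equal to $\mathcal{O}_{l_0}(-1)$, independently of the unknown $\mathcal{Q}|_{l_0}$. That identification is false (the sentence ``Thus $\mathcal{Q}_{2}=\mathcal{O}_{l_{0}}(-1)$'' preceding the theorem in the paper is itself a slip: it contradicts the theorem's conclusion, and the paper's proof never uses it). What is actually true is that $\mathcal{Q}_2=\mathcal{I}_{l_0}\mathcal{Q}$ is \emph{coupled} to $a$: multiplication induces a surjection $\mathcal{Q}|_{l_0}\otimes L\twoheadrightarrow\mathcal{Q}_2$, where $L=\mathcal{I}_{l_0}/\mathcal{I}_{\mathcal{C}}\cong\mathcal{O}_{l_0}(-1)$; since $\mathcal{Q}_2$ is a subsheaf of the pure sheaf $\mathcal{Q}$ annihilated by $\mathcal{I}_{l_0}$, it is torsion-free on $l_0$, so the summand $T\otimes L$ maps to zero and $\mathcal{Q}_2$ is a torsion-free quotient of the line bundle $\mathcal{O}_{l_0}(a-1)$, hence $\mathcal{Q}_2\cong\mathcal{O}_{l_0}(a-1)$; it is nonzero because $\mathcal{I}_{l_0}\mathcal{O}_{\mathcal{C}}\cong\mathcal{O}_{l_0}(-1)$ embeds into it (this is also what excludes $\mathcal{Q}$ being a rank $2$ bundle on the reduced line, a case you silently skip). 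This coupling is precisely what the paper's proof records via the sequence $0\to D\otimes\mathcal{O}_{l_0}(-1)\to\mathcal{Q}\to D\oplus T\to 0$ with $D$ undetermined.

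The miscalibration then breaks both of your equations. The degree equation should be $\deg\mathcal{Q}_2+\deg\mathcal{Q}|_{l_0}=(a-1)+(a+\ell)=Deg(\mathcal{Q})=2$, i.e.\ $2a+\ell=3$, not $a+\ell=3$; note that your own final answer $(a,\ell)=(1,1)$ violates $a+\ell=3$. The cohomological constraint also goes off: with your $\mathcal{Q}_2(-2)=\mathcal{O}_{l_0}(-3)$ one has $\ho^1(\mathcal{O}_{l_0}(-3))=\mathbb{C}^2$ (not $\mathbb{C}$ as you wrote), and the vanishing $\ho^{0}(\mathcal{Q}(-2))=\ho^{1}(\mathcal{Q}(-2))=0$ then gives $\ho^0(\mathcal{O}_{l_0}(a-2))+\ell=2$ with $a\geq1$, whose solutions $(a,\ell)=(1,2),(2,1),(3,0)$ all satisfy $a+\ell=3$ as well; so your two constraints are really one equation, they cannot isolate a unique answer, and the true one is not even among the candidates. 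The length balance $l_{\mathcal{Z}}=3$ in diagram \eqref{rest-filt} reproduces the same wrong relation (your assumption forces $\mathcal{Z}_1=0$), so it cannot rescue the argument either. Once the correct coupling $\mathcal{Q}_2\cong\mathcal{O}_{l_0}(a-1)$ is restored, everything closes at once: $\ho^0(\mathcal{Q}_2(-2))=0$ gives $a\leq2$, $\ho^1(\mathcal{Q}|_{l_0}(-2))=0$ gives $a\geq1$, and $\ho^0(\mathcal{O}_{l_0}(a-2))+\ell=\dim\ho^1(\mathcal{O}_{l_0}(a-3))=2-a$ forces $(a,\ell)=(1,1)$, consistently with $2a+\ell=3$ and with the theorem --- and it shows a posteriori that $\mathcal{Q}_2\cong\mathcal{O}_{l_0}$, confirming that the assertion you borrowed from the text was erroneous.
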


\begin{proof}
Torsion free sheaves of generalised rank $2$ on the double line are of three types \cite[\S 8.2]{Drezet1}, namely line bundles, vector bundles on $l_{0}$ and the strictly Torsion-free;

\vspace{0.3cm}

If $\mathcal{Q}$ is a vector bundle on $l_{0},$ then it is equal to its restriction, which contradicts the diagram \eqref{rest-filt} by the fact that $\mathcal{Q}_{2}=0,$ and the snake lemma, implies that $ \mathcal{Z}_{1}$ is a pure torsion subsheaf of $\mathcal{O}_{l_{0}}.$

\vspace{0.3cm}

If $\mathcal{Q}$ is a line bundle on $\mathcal{C},$ then its restriction is the line bundle $\mathcal{O}_{l_{0}}(3),$ which is the only possibility compatible with the right column in \eqref{rest-filt}. On the other hand $\mathcal{Q}$ fits in the following exact sequence
$$0\to D\otimes\mathcal{O}_{l_{0}}(-1)\to\mathcal{Q}\to D\to0$$
where $D=\mathcal{O}_{l_{0}}(3).$ But this means that $\mathcal{O}_{l_{0}}(3)\otimes\mathcal{O}_{l_{0}}(-1)\cong\mathcal{O}_{l_{0}}(-1).$ Hence, $\mathcal{Q}$ cannot be a line bundle on $\mathcal{C}.$

\vspace{0.3cm}

Finally, in a more general situation $\mathcal{Q}$ fits in a short exact sequence 

$$0\to D\otimes\mathcal{O}_{l_{0}}(-1)\to\mathcal{Q}\to D\oplus T\to0,$$
where $D$ is a line bundle on $l_{0}$ and $T$ is a torsion sheaf, also on $l_{0}.$ Twisting the diagram by $\mathcal{O}_{\mathbb{P}^{3}}(-2)$ and using the vanishing conditions $\ho^{0,1}(\mathcal{Q}(-2))=0,$ one has that $0<d:=deg(D)<3$ and $\ho^{1}(D(-3))=\ho^{0}(D(-2))\oplus\ho^{0}(T).$ 

If $d=2$ then $\ho^{1}(D(-3))=\ho^{1}(\mathcal{O}_{l_{0}}(-1))=0\ho^{0}(T).$ It follows that the torsion sheaf $T$ is zero, which is not possible, as the restriction $\mathcal{Q}|_{l_{0}}$ is not locally free.  

If  $d=1$ then $\ho^{1}(D(-3))=\ho^{1}(\mathcal{O}_{l_{0}})=\mathbb{C}=\ho^{0}(T).$ Thus $T$ is a torsion sheaf of length $1,$ and it follows, from display \eqref{rest-filt}, that $\mathcal{Z}_{2}$ is the structure sheaf of $2$ points.


\end{proof}

\begin{corollary}\label{componts}
$\mathcal{M}^{\mathbb{T}}_{\mathbb{P}^{3}}(2)$ has at least $2$ irreducible component. 
\end{corollary}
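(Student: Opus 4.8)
The plan is to split $\mathcal{M}^{\mathbb{T}}_{\mathbb{P}^{3}}(2)$ into two disjoint nonempty pieces $U_{1}$ and $U_{2}$ according to the schematic support of the associated rank $0$ instanton sheaf $\mathcal{Q}$, and to show that these two pieces are open and closed, so that they lie in distinct connected, hence distinct irreducible, components. The input is Theorem \ref{multiple-class}: for a $\mathbb{T}$-fixed stable $0$ instanton pair $(\mathcal{Q},s)$ of charge $2$ the support $\mathcal{C}$ is a monomial Cohen-Macaulay multiple structure on the line $l_{0}$ corresponding to a Young diagram of weight $2$. There are exactly two partitions of $2$, namely $(2)$ and $(1,1)$, realised by the two monomial ideals $\langle z_{2},z_{3}^{2}\rangle$ and $\langle z_{2}^{2},z_{3}\rangle$. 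Setting $U_{1}=\{(\mathcal{Q},s):\mathcal{I}_{\mathcal{C}}=\langle z_{2},z_{3}^{2}\rangle\}$ and $U_{2}=\{(\mathcal{Q},s):\mathcal{I}_{\mathcal{C}}=\langle z_{2}^{2},z_{3}\rangle\}$, every point of $\mathcal{M}^{\mathbb{T}}_{\mathbb{P}^{3}}(2)$ lies in exactly one of $U_{1}$ and $U_{2}$.

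First I would check that $U_{1}$ and $U_{2}$ are nonempty. The locus $U_{1}$ is exactly the case treated in Theorem \ref{type}, where a suitable $\mathcal{Q}$ with $\mathcal{Q}|_{l_{0}}\cong\mathcal{O}_{l_{0}}(1)\oplus T$ and $T$ of length $1$ is produced; the locus $U_{2}$ is obtained by the symmetric construction interchanging $z_{2}$ and $z_{3}$, as indicated just before the statement of Theorem \ref{type}. Hence each locus contains at least one $\mathbb{T}$-fixed stable $0$ instanton pair.

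Next I would prove that $U_{1}$ and $U_{2}$ are separated. Consider the assignment $(\mathcal{Q},s)\mapsto[\mathcal{C}]$ sending a fixed pair to the schematic support of $\mathcal{Q}$, which by Theorem \ref{strict-torsion-free} is always a $\mathbb{T}$-invariant Cohen-Macaulay curve on $l_{0}$, hence a monomial ideal, and which therefore lands in the $\mathbb{T}$-fixed locus of $\hilb(\mathbb{P}^{3})$. Since $\mathcal{Q}$ varies flatly with pure $1$-dimensional fibres of constant Hilbert polynomial, its support $\mathcal{C}$ forms a flat family of Cohen-Macaulay curves of fixed Hilbert polynomial $2m+1$; on the fixed locus the only values it can take are the two monomial points $\langle z_{2},z_{3}^{2}\rangle$ and $\langle z_{2}^{2},z_{3}\rangle$, which by Theorem \ref{multiple-class}(i) are smooth points of $\hilb(\mathbb{P}^{3})$ and are distinct $\mathbb{T}$-fixed ideals sharing the same Hilbert polynomial. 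A connected $\mathbb{T}$-fixed family cannot interpolate between two such isolated torus-fixed points, so the support type is locally constant on $\mathcal{M}^{\mathbb{T}}_{\mathbb{P}^{3}}(2)$. Consequently $U_{1}$ and $U_{2}$ are each open and closed, they lie in different connected components, and since distinct connected components meet distinct irreducible components we conclude that $\mathcal{M}^{\mathbb{T}}_{\mathbb{P}^{3}}(2)$ has at least two irreducible components.

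The hard part will be making this separation precise, namely verifying that the schematic support genuinely defines a flat family over $\mathcal{M}^{\mathbb{T}}_{\mathbb{P}^{3}}(2)$ so that ``support type'' is locally constant rather than merely a constructible invariant. Once this flatness is established, the reduction to the finite two-element set of torus-fixed monomial double structures of Hilbert polynomial $2m+1$ is immediate from Theorem \ref{multiple-class}, and the component count then follows formally.
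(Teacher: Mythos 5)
Your skeleton is the same as the paper's: stratify $\mathcal{M}^{\mathbb{T}}_{\mathbb{P}^{3}}(2)$ by the two weight-$2$ Young diagrams of Theorem \ref{multiple-class} (the monomial ideals $\langle z_{2},z_{3}^{2}\rangle$ and $\langle z_{2}^{2},z_{3}\rangle$), use Theorem \ref{type} (and the $z_{2}\leftrightarrow z_{3}$ symmetry) to see each stratum is realised, and conclude each stratum contributes a component. In fact the paper's own proof is terser than yours: it cites Theorem \ref{multiple-class} and Theorem \ref{type} and simply asserts the conclusion, the disjointness of the strata being stated again without proof in Lemma \ref{minimum-components} and the remark following it. So your instinct that the separation step is where the real work lies is correct.

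However, the route you propose for that step would fail as stated: the schematic support of a flat family of pure one-dimensional sheaves with constant Hilbert polynomial is \emph{not} flat in general, so "the support forms a flat family of curves" is not available. A counterexample lives inside this very situation: $\mathcal{O}_{\mathcal{C}}$, for $\mathcal{C}$ the double line $\langle z_{2},z_{3}^{2}\rangle$, is a non-split extension $0\to\mathcal{O}_{l_{0}}(-1)\to\mathcal{O}_{\mathcal{C}}\to\mathcal{O}_{l_{0}}\to0$; scaling the extension class by $t\in\mathbb{A}^{1}$ gives a flat family of pure sheaves, all of Hilbert polynomial $2m+1$, whose schematic support is $\mathcal{C}$ for $t\neq0$ but the reduced line $l_{0}$ at $t=0$ (the fibre there is $\mathcal{O}_{l_{0}}\oplus\mathcal{O}_{l_{0}}(-1)$, which of course violates the instanton vanishing, so this family does not sit inside $\mathcal{M}^{\mathbb{T}}_{\mathbb{P}^{3}}(2)$ --- but it kills the general principle you invoke). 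The repair is to replace flatness of supports by closedness of annihilator conditions: for a fixed ideal $\mathcal{I}$, the locus of points at which $\mathcal{I}$ annihilates the fibre of the flat universal quotient is closed (valuative check: if $\mathcal{I}$ kills the generic fibre of a family $\mathcal{Q}$ flat over a DVR, then $\mathcal{I}\cdot\mathcal{Q}$ is a subsheaf of $\mathcal{Q}$ supported over the closed point, hence killed by the uniformizer, hence zero by flatness). By Theorem \ref{multiple-class} the two closed loci $U_{i}'=\{\mathcal{I}_{\mathcal{C}_{i}}\cdot\mathcal{Q}=0\}$ cover $\mathcal{M}^{\mathbb{T}}_{\mathbb{P}^{3}}(2)$, and they are disjoint, since a point of $U_{1}'\cap U_{2}'$ would have $\mathcal{Q}$ annihilated by $\mathcal{I}_{\mathcal{C}_{1}}+\mathcal{I}_{\mathcal{C}_{2}}=\langle z_{2},z_{3}\rangle$, i.e.\ $\mathcal{Q}$ would be a pure sheaf on the reduced line $l_{0}$, hence a rank two vector bundle on $l_{0}$ --- exactly the first case excluded in the proof of Theorem \ref{type}. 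With this substitution your two pieces are open, closed, disjoint and nonempty, and the count of components follows as you say.
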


\vspace{0.4cm}

\begin{proof}
From Theorem \ref{multiple-class}, and as we saw in the beginning of this section, there are two possible Young diagrams for the support. To each one of these curves there is one possible filtration $0\subset\mathcal{O}_{l_{0}}\subset\mathcal{Q}$ with restriction $\mathcal{O}_{l_{0}}(1)\oplus T,$ where length$(T)=1$. Finally, by Theorem \ref{type} and the fact that $\mathcal{Q}$ might have non trivial deformation in each case, then the result follow. 

\end{proof}

We remark that the double curve $\mathcal{C}$ can be deformed into two lines intersecting in a point, but we don't know if one can deform the $0-$rank instanton sheaves $\mathcal{Q}$ into torsion free sheaves on the reducible curve formed by two intersecting lines. This is a hard problem and should be investigated in the future.

\vspace{0.3cm}

\subsubsection{Stable rank $0$ instanton pair of charge $3$}

In this case one has $3$ possible associated Young diagrams, namely

\begin{center}
\ytableausetup{mathmode}
\begin{ytableau}
\quad & \none \\
\quad &  \none  \\
 \quad &  \none 
\end{ytableau}  \hspace{1cm}
\ytableausetup{mathmode}
\begin{ytableau}
\none & \none \\
\none &  \none  \\
 \quad & \quad & \quad 
\end{ytableau} \hspace{1cm}
\ytableausetup{mathmode}
\begin{ytableau}
\none & \none \\
\quad & \none \\
 \quad & \quad 
\end{ytableau}
\end{center}

In the following, we will treat only the primitive cases, so we can concentrate on the horizontal Young diagram, the vertical case being similar. The canonical filtration of the triple curve $\mathcal{C}$ is given by $0\subset\mathcal{L}_{3}=\mathcal{O}_{l_{0}}(-1)\subset\mathcal{L}_{2}\subset\mathcal{O}_{\mathcal{C}}$ with quotients $\mathcal{O}_{\mathcal{C}}/\mathcal{L}_{2}=\mathcal{O}_{l_{0}},$ $\mathcal{O}_{\mathcal{C}}/\mathcal{L}_{3}=\mathcal{O}_{\mathcal{C}_{2}}$ and  $\mathcal{L}_{2}/\mathcal{L}_{3}=\mathcal{O}_{l_{0}}(-1).$ we also recall that $\chi(\mathcal{O}_{\mathcal{C}}(m))=3m$ and $\chi(\mathcal{Q}(m))=3m+6.$ On the other hand one has a canonical filtration $0\subset\mathcal{Q}_{3}\subset\mathcal{Q}_{2}\subset\mathcal{Q}.$ Twisting by $\mathcal{O}_{\mathbb{P}^{3}}(-2)$ and using the instanton conditions $\ho^{0,1}(\mathcal{Q}(-2))=0,$ one has

\begin{equation}\label{cascade:cond}
\begin{array}{ll}
(a) &\ho^{0}(\mathcal{Q}_{2}(-2))=\ho^{0}(\mathcal{Q}_{3}(-2))=0, \\
(b) &\ho^{1}(\mathcal{Q}|_{l_{0}}(-2))=\ho^{1}(\mathcal{Q}|_{\mathcal{C}_{2}}(-2))=0, \\
(c) &\ho^{1}(\mathcal{Q}_{2}(-2))=\ho^{0}(\mathcal{Q}|_{l_{0}}(-2)) \textnormal{ and } \\
(d) &\ho^{1}(\mathcal{Q}_{3}(-2))=\ho^{0}(\mathcal{Q}|_{\mathcal{C}_{2}}(-2))
\end{array}
\end{equation}

Moreover, $\mathcal{Q}_{2}(-2)$ is a generalised rank $2$ sheaf on $\mathcal{C}_{2}$ and $\mathcal{Q}_{3}(-2)$ is a line bundle on $l_{0}$. Then we have the following exact sequence, associated to the canonical filtration of $\mathcal{Q}_{2}(-2),$
\begin{displaymath}
0\to D\otimes \mathcal{O}_{l_{0}}(-1)\to\mathcal{Q}_{2}(-2)\to D\oplus T_{2}\to0,
\end{displaymath}
in which $D=\mathcal{Q}_{3}(-1)$ and $T_{2}$ is pure torsion sheaf on $l_{0}.$ But from \eqref{cascade:cond}, $(a)$ and $(b),$ the degree $d,$ of  $D=\mathcal{Q}_{3}(-1),$ satisfies $-2<d<1,$ i. e., $deg(\mathcal{Q}_{3})=0,$ or $deg(\mathcal{Q}_{3})=1.$  

The next step is to consider the commutative diagrams

\begin{equation}\label{level:2}
\xymatrix@R-1pc@C-1pc{
& 0\ar[d]& 0 \ar[d]& 0 \ar[d] & \\
0\ar[r]& \mathcal{L}_{2}(-2)\ar[d]\ar[r] & \mathcal{O}_{\mathcal{C}}(-2) \ar[r]\ar[d]& \mathcal{O}_{l_{0}}(-2)\ar[r]\ar[d] &0 \\
0\ar[r] & \mathcal{Q}_{2}(-2)\ar[r] \ar[d]& \mathcal{Q}(-2) \ar[d]\ar[r]&  \mathcal{Q}|_{l_{0}}(-2) \ar[r]\ar[d]&0\\
0\ar[r]&   \bar{\mathcal{Z}}\ar[d]\ar[r]& \mathcal{Z}_{6}\ar[r] \ar[d]& \tilde{\mathcal{Z}}\ar[d]\ar[r]& 0\\
&0&0&0&
}
\end{equation}

\begin{equation}\label{level:3}
\xymatrix@R-1pc@C-1pc{
& 0\ar[d]& 0 \ar[d]& 0 \ar[d] & \\
0\ar[r]& \mathcal{O}_{l_{0}}(-4)\ar[d]\ar[r] & \mathcal{L}_{2}(-2)\ar[r]\ar[d]& \mathcal{O}_{l_{0}}(-3)\ar[r]\ar[d] &0 \\
0\ar[r] & \mathcal{Q}_{3}(-2)\ar[r] \ar[d]& \mathcal{Q}_{2}(-2) \ar[d]\ar[r]&  \mathcal{Q}_{3}(-1)\oplus T_{2} \ar[r]\ar[d]&0\\
0\ar[r]&  \mathcal{Z}_{2}\ar[d]\ar[r]& \bar{\mathcal{Z}}\ar[r] \ar[d]& \mathcal{Z}_{2}\oplus T_{2}\ar[d]\ar[r]&0 \\
&0&0&0&
}
\end{equation}

associated to the sequence $$0\to\mathcal{O}_{\mathcal{C}}\to\mathcal{Q}\to\mathcal{Z}_{6}\to0$$ and the canonical filtrations of $\mathcal{O}_{\mathcal{C}}$ and $\mathcal{Q}.$ Here we remind the reader that $\mathcal{Z}_{6}$ has length $6$  and $\mathcal{Z}_{2}$ is a torsion sheaf of length $2,$ if $\mathcal{Q}_{3}= \mathcal{O}_{l_{0}}$ or $3,$ if $\mathcal{Q}_{3}= \mathcal{O}_{l_{0}}(1),$ as it clearly appears from left column of diagram \eqref{level:3}.

Suppose that $\mathcal{Q}_{3}=\mathcal{O}_{l_{0}}(1),$ then, from the middle row of diagram \eqref{level:3}, one has $\chi(\mathcal{Q}_{2}(-2))=t_{2}+1.$ In particular, $\chi(\mathcal{Q}_{2}(-2))>0,$ since $t_{2}\geq0.$ On the other hand, we have  $\ho^{0}(\mathcal{Q}_{2}(-2))=0,$ and it follows that $\chi(\mathcal{Q}_{2}(-2))=-{\rm dim}\ho^{1}(\mathcal{Q}_{2}(-2))<0.$ Thus $\mathcal{Q}_{3}$ cannot be $\mathcal{O}_{l_{0}}(1),$ and we are left with $\mathcal{Q}_{3}=\mathcal{O}_{l_{0}}.$ In this case one has $\chi(\mathcal{Q}_{2}(-2))=t_{2}-1$ and $t_{2}$ must be $0$ or $1,$ since  $\chi(\mathcal{Q}_{2}(-2))\leq0.$ 

If $t_{2}=0,$ then $T_{2}=0,$ $\mathcal{Q}_{2}(-2)=\mathcal{L}_{2},$ since the middle row of \eqref{level:3} is exactly the restriction sequence, given by the canonical filtration, of $\mathcal{L}_{2}$ to $l_{0}.$ Furthermore, from the lower rows of \eqref{level:2} and \eqref{level:3}. It follows that $\bar{\mathcal{Z}}$ has length $4,$ thus $\tilde{\mathcal{Z}}$ has length $2.$ By using the right column of \eqref{level:2} one has $\chi(\mathcal{Q}|_{l_{0}}(-2))=1,$ Hence $\mathcal{Q}|_{l_{0}}=\mathcal{O}_{l_{0}}.$

Finally if $t_{2}=1$ then, from the middle row of \eqref{level:3}, one can see that $\mathcal{Q}_{2}$ also satisfies $\ho^{0,1}(\mathcal{Q}_{2}(-2))=0,$ hence it is an rank $0$ instanton sheaf over $\mathcal{C}_{2}.$ Moreover, the length of $\bar{\mathcal{Z}}$ is equal to $5$ and it follows, again, from the right column of diagram \eqref{level:3} that $\tilde{\mathcal{Z}}$ has length $1$ and $\mathcal{Q}|_{l_{0}}=\mathcal{O}_{l_{0}}(1).$ This proves the following

\begin{theorem}\label{cases:c=3}
For $c=3$ and $\mathcal{C}$ is primitive and monomial, the sheaf $\mathcal{Q}$ has a canonical filtration $0\subset\mathcal{Q}_{3}=\mathcal{O}_{l_{0}}\subset\mathcal{Q}_{2}\subset\mathcal{Q}$ in which  

\begin{itemize}
\item[(i)] $\mathcal{Q}_{2}=\mathcal{L}_{2}(2),$ $\mathcal{Q}|_{l_{0}}=\mathcal{O}_{l_{0}}(2),$ and $\mathcal{Q}_{2}/\mathcal{Q}_{3}=\mathcal{O}_{l_{0}}(1),$ or

\item[(ii)] $\mathcal{Q}_{2}$ is a rank $0$ instanton sheaf on $\mathcal{C}_{2}\subset\mathcal{C},$ $\mathcal{Q}|_{l_{0}}=\mathcal{O}_{l_{0}}(1),$ and $\mathcal{Q}_{2}/\mathcal{Q}_{3}=\mathcal{O}_{l_{0}}(1)\oplus T_{2},$ where $T_{2}$ is a torsion sheaf of length $1.$
\end{itemize}

\end{theorem}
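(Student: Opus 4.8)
The plan is to prove Theorem \ref{cases:c=3} by a careful bookkeeping argument on the two commutative diagrams \eqref{level:2} and \eqref{level:3}, driven entirely by the rank-$0$ instanton vanishing conditions $\ho^{0,1}(\mathcal{Q}(-2))=0$ and the cascade of relations \eqref{cascade:cond}. The essential preliminary work—pinning down $\mathcal{Q}_{3}=\mathcal{O}_{l_{0}}$ and showing $t_{2}:=\mathrm{length}(T_{2})\in\{0,1\}$—has already been carried out in the paragraphs preceding the statement, so the theorem is really a packaging of those two surviving alternatives together with the identification of the remaining sheaves. First I would recall that we are reduced to $\mathcal{Q}_{3}=\mathcal{O}_{l_{0}}$ with $\chi(\mathcal{Q}_{2}(-2))=t_{2}-1$, and that the two admissible values $t_{2}=0$ and $t_{2}=1$ correspond exactly to cases (i) and (ii).

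For case (i), where $t_{2}=0$, the key observation is that vanishing of $T_{2}$ forces the middle row of diagram \eqref{level:3} to become precisely the restriction sequence $0\to\mathcal{O}_{l_{0}}(-4)\to\mathcal{Q}_{2}(-2)\to\mathcal{O}_{l_{0}}(-3)\to 0$ arising from the canonical filtration of $\mathcal{L}_{2}(-2)$; comparing this with the top row identifies $\mathcal{Q}_{2}(-2)\cong\mathcal{L}_{2}(-2)$, whence $\mathcal{Q}_{2}\cong\mathcal{L}_{2}(2)$ after untwisting, and the quotient $\mathcal{Q}_{2}/\mathcal{Q}_{3}=\mathcal{O}_{l_{0}}(1)$ reads off from the right column. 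To get the restriction $\mathcal{Q}|_{l_{0}}$ I would use the length count: chasing the bottom rows of \eqref{level:2} and \eqref{level:3} gives $\mathrm{length}(\bar{\mathcal{Z}})=4$, so that $\tilde{\mathcal{Z}}$ has length $2$, and then the Euler-characteristic computation from the right column of \eqref{level:2} yields $\chi(\mathcal{Q}|_{l_{0}}(-2))=1$, forcing $\mathcal{Q}|_{l_{0}}=\mathcal{O}_{l_{0}}(2)$ (note that the $\mathcal{O}_{l_0}$ claimed in the draft should read $\mathcal{O}_{l_0}(2)$, consistent with $\chi=1$ on $\mathbb{P}^1$).

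For case (ii), where $t_{2}=1$, the decisive point is that the middle row of \eqref{level:3} shows $\mathcal{Q}_{2}$ itself satisfies $\ho^{0,1}(\mathcal{Q}_{2}(-2))=0$: the outer terms $\mathcal{Q}_{3}(-2)=\mathcal{O}_{l_{0}}(-2)$ and $\mathcal{Q}_{3}(-1)\oplus T_{2}=\mathcal{O}_{l_{0}}(-1)\oplus T_{2}$ have the right cohomology, so the long exact sequence transports the vanishing to the middle, and since $\mathcal{Q}_{2}$ is pure of dimension $1$ supported on $\mathcal{C}_{2}$ it is by definition a rank-$0$ instanton sheaf on $\mathcal{C}_{2}$. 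The remaining assertions about lengths and $\mathcal{Q}|_{l_{0}}$ follow by the same diagram chase as in (i): here $\bar{\mathcal{Z}}$ has length $5$, so $\tilde{\mathcal{Z}}$ has length $1$ and $\chi(\mathcal{Q}|_{l_{0}}(-2))=0$, giving $\mathcal{Q}|_{l_{0}}=\mathcal{O}_{l_{0}}(1)$, while $\mathcal{Q}_{2}/\mathcal{Q}_{3}=\mathcal{O}_{l_{0}}(1)\oplus T_{2}$ is immediate from the right column of \eqref{level:3}.

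The main obstacle is not any single deep step but the bookkeeping hygiene of the two stacked $3\times 3$ diagrams: one must verify that the snake-lemma identifications of the cokernel sheaves $\bar{\mathcal{Z}}$, $\tilde{\mathcal{Z}}$, $\mathcal{Z}_{2}$ are mutually consistent with the fixed total length $\mathrm{length}(\mathcal{Z}_{6})=6$, and that the degree bound $-2<\deg D<1$ from \eqref{cascade:cond} has been used to its full strength to exclude $\deg\mathcal{Q}_{3}=1$. I expect the only genuinely delicate check to be confirming, in case (ii), that the induced section structure makes $\mathcal{Q}_{2}$ an honest rank-$0$ instanton (rather than merely a pure sheaf with the stated cohomology), which is why the purity established in Theorem \ref{strict-torsion-free} is invoked here; everything else is linear algebra on $\mathbb{P}^{1}$-cohomology and additivity of Euler characteristics.
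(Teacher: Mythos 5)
Your route is the paper's own: take the reductions $\mathcal{Q}_{3}=\mathcal{O}_{l_{0}}$ and $t_{2}\in\{0,1\}$ from the preceding paragraphs, then chase the diagrams \eqref{level:2} and \eqref{level:3}; your length bookkeeping ($\bar{\mathcal{Z}}$ of length $4$ resp.\ $5$, $\tilde{\mathcal{Z}}$ of length $2$ resp.\ $1$, $\chi(\mathcal{Q}|_{l_{0}}(-2))=1$ resp.\ $0$) agrees with the paper, and your correction of the text's $\mathcal{O}_{l_{0}}$ to $\mathcal{O}_{l_{0}}(2)$ in case (i) is right. However, the central identification in case (i) is wrong as written. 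With $\mathcal{Q}_{3}=\mathcal{O}_{l_{0}}$ and $T_{2}=0$, the middle row of \eqref{level:3} reads $0\to\mathcal{O}_{l_{0}}(-2)\to\mathcal{Q}_{2}(-2)\to\mathcal{O}_{l_{0}}(-1)\to0$, not $0\to\mathcal{O}_{l_{0}}(-4)\to\mathcal{Q}_{2}(-2)\to\mathcal{O}_{l_{0}}(-3)\to0$ as you claim: what you wrote is the \emph{top} row of that diagram, and it is incompatible with the Euler characteristic you yourself invoke, since it would give $\chi(\mathcal{Q}_{2}(-2))=-5$ instead of $t_{2}-1=-1$. The correct comparison is therefore with the restriction sequence of $\mathcal{L}_{2}$ \emph{untwisted}, yielding $\mathcal{Q}_{2}(-2)\cong\mathcal{L}_{2}$ and hence $\mathcal{Q}_{2}\cong\mathcal{L}_{2}(2)$. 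Your chain ``$\mathcal{Q}_{2}(-2)\cong\mathcal{L}_{2}(-2)$, whence $\mathcal{Q}_{2}\cong\mathcal{L}_{2}(2)$ after untwisting'' only lands on the stated conclusion because a second twisting error (untwisting $\mathcal{L}_{2}(-2)$ by $\mathcal{O}_{\mathbb{P}^{3}}(2)$ gives $\mathcal{L}_{2}$, not $\mathcal{L}_{2}(2)$) cancels the first.

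In case (ii), your claim that the vanishing $\ho^{0,1}(\mathcal{Q}_{2}(-2))=0$ is ``transported'' from the outer terms of the middle row is not a proof: for the extension $0\to\mathcal{O}_{l_{0}}(-2)\to\mathcal{Q}_{2}(-2)\to\mathcal{O}_{l_{0}}(-1)\oplus T_{2}\to0$, the vanishing holds precisely when the connecting map $\ho^{0}(T_{2})\to\ho^{1}(\mathcal{O}_{l_{0}}(-2))$ is an isomorphism, and this does not follow from the cohomology of the outer terms alone. The argument that works (and is what the paper's terse statement amounts to) is that $\ho^{0}(\mathcal{Q}_{2}(-2))=0$ is already condition $(a)$ of \eqref{cascade:cond}, and then $\chi(\mathcal{Q}_{2}(-2))=t_{2}-1=0$ forces $\ho^{1}(\mathcal{Q}_{2}(-2))=0$ as well. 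With these two local repairs your argument coincides with the paper's; note also that both you and the paper leave implicit the step that an extension of $\mathcal{O}_{l_{0}}(-1)$ by $\mathcal{O}_{l_{0}}(-2)$ on $\mathcal{C}_{2}$ realizing the canonical filtration of $\mathcal{Q}_{2}(-2)$ must actually be isomorphic to $\mathcal{L}_{2}$, which deserves a word on the extension class.
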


Now we turn our attention to the first non primitive case, that is, when the corresponding Young diagram is
\begin{center}
 \ytableausetup{mathmode}
\begin{ytableau}
\none & \none \\
\quad & \none \\
 \quad & \quad 
\end{ytableau}
\end{center}
This is the case of the (affine) ideal $<x,y>^{2}.$ It is easy to check that the restriction map is given by:
$$0\to\mathcal{O}_{l_{0}}(-1)^{\oplus 2}\to\mathcal{O}_{\mathcal{C}}\to\mathcal{O}_{l_{0}}\to0.$$ There are two filtrations represented as below:

{\tiny
\begin{center}
\ytableausetup{mathmode}
\begin{ytableau}
\quad &  \none  \\
 \quad &  \none 
\end{ytableau} 
\end{center}}
\begin{equation}\label{filts}\xymatrix@R-0.5pc@C-1pc{ & & \\ \ar[rru] & & }\hspace{1cm}\xymatrix@R-0.5pc@C-1pc{  \ar[rrd] & & \\ & & }\end{equation} {\tiny
\begin{center}
\ytableausetup{mathmode}
\begin{ytableau}
\none & \none \\
\quad & \none \\
 \quad & \quad 
\end{ytableau} \hspace{4cm}
\ytableausetup{mathmode}
\begin{ytableau}
\none& \none\\
\none &\none \\
\quad &  \none
\end{ytableau} 
\end{center}
$$\xymatrix@R-0.5pc@C-1pc{  \ar[rrd] & & \\& & }\hspace{1cm}\xymatrix@R-0.5pc@C-1pc{ & & \\  \ar[rru] & & }$$
\begin{center}
\ytableausetup{mathmode}
\begin{ytableau}
\none &  \none  \\
 \quad & \quad  
\end{ytableau} 
\end{center}
}

Note that the double structures in the middle column are primitive, and although there is no unique canonical filtration, we still manage to compute the resulting possible pure sheaves $\mathcal{Q}.$ We use for instance the filtration given by the upper arrow of \eqref{filts}, and as in the previous theorem, we consider restriction diagrams as \eqref{level:2} and \eqref{level:3};

\begin{equation}\label{level:2-2}
\xymatrix@R-1pc@C-1pc{
& 0\ar[d]& 0 \ar[d]& 0 \ar[d] & \\
0\ar[r]& \mathcal{O}_{l_{0}}(-3)^{\oplus 2}\ar[d]\ar[r] & \mathcal{O}_{\mathcal{C}}(-2) \ar[r]\ar[d]& \mathcal{O}_{l_{0}}(-2)\ar[r]\ar[d] &0 \\
0\ar[r] & \mathcal{Q}_{2}(-2)\ar[r] \ar[d]& \mathcal{Q}(-2) \ar[d]\ar[r]&  \mathcal{Q}|_{l_{0}}(-2) \ar[r]\ar[d]&0\\
0\ar[r]&   \bar{\mathcal{Z}}\ar[d]\ar[r]& \mathcal{Z}_{5}\ar[r] \ar[d]& \tilde{\mathcal{Z}}\ar[d]\ar[r]& 0\\
&0&0&0&
}
\end{equation}

\begin{equation}\label{level:3-2}
\xymatrix@R-1pc@C-1pc{
& 0\ar[d]& 0 \ar[d]& 0 \ar[d] & \\
0\ar[r]& \mathcal{O}_{l_{0}}(-3)\ar[d]\ar[r] & \mathcal{O}_{\mathcal{C}}(-2) \ar[r]\ar[d]& \mathcal{O}_{\mathcal{C}_{2}}(-2) \ar[r]\ar[d] &0 \\
0\ar[r] & \mathcal{Q}_{3}(-2)\ar[r] \ar[d]& \mathcal{Q}(-2) \ar[d]\ar[r]&  \mathcal{Q}|_{\mathcal{C}_{2}}(-2) \ar[r]\ar[d]&0\\
0\ar[r]&  \hat{\mathcal{Z}}_{2}\ar[d]\ar[r]& \mathcal{Z}_{5}\ar[r] \ar[d]& \check{\mathcal{Z}}_{2}\ar[d]\ar[r]&0 \\
&0&0&0&
}
\end{equation}

We recall that $\chi(\mathcal{Q}(m))=3m+6,$ $\chi(\mathcal{O}_{\mathcal{C}}(m))=3m+1,$ so that $\mathcal{Z}_{5}$ has length $5.$

\begin{theorem}\label{non-primitive}
For $c=3$ and $\mathcal{C}$ the non primitive monomial curve, the sheaf $\mathcal{Q}$ has a filtration $0\subset\mathcal{O}_{3}\subset\mathcal{Q}_{2}\subset\mathcal{Q}$ such that  

\begin{itemize}
\item[(i)] $\mathcal{Q}|_{l_{0}}=\mathcal{O}_{l_{0}}(1)\oplus T_{3},$ where $T_{3}$ is a torsion sheaf of length $3,$ $\mathcal{Q}_{2}$ is a sheaf on $\mathcal{C}_{2}$ with restriction sequence $$0\to\mathcal{O}_{l_{0}}(-1)\to\mathcal{Q}_{2}\to\mathcal{O}_{l_{0}}\to0.$$ and $\mathcal{Q}|_{\mathcal{C}_{2}}$ is also a sheaf on  $\mathcal{C}_{2}$ with restriction sequence $$0\to\mathcal{O}_{l_{0}}\to\mathcal{Q}|_{\mathcal{C}_{2}}\to\mathcal{O}_{l_{0}}(-1)\oplus T_{3}\to0,\quad or$$

\item[(ii)] $\mathcal{Q}|_{l_{0}}=\mathcal{O}_{l_{0}}(2)\oplus T_{1},$ where $T_{1}$ is a torsion sheaf of length $1,$ $\mathcal{Q}_{2}$ is a sheaf on $\mathcal{C}_{2}$ with restriction sequence $$0\to\mathcal{O}_{l_{0}}(-1)\to\mathcal{Q}_{2}\to\mathcal{O}_{l_{0}}\oplus T_{1}\to0.$$ and $\mathcal{Q}|_{\mathcal{C}_{2}}$ is also a sheaf on $\mathcal{C}_{2}$ with restriction sequence $$0\to\mathcal{O}_{l_{0}}(1)\to\mathcal{Q}|_{\mathcal{C}_{2}}\to\mathcal{O}_{l_{0}}(2)\oplus T_{1}\to0,\quad or$$

\item[(iii)] $\mathcal{Q}|_{l_{0}}=\mathcal{O}_{l_{0}}(2),$ $\mathcal{Q}_{2}$ is a sheaf on $\mathcal{C}_{2}$ with restriction sequence $$0\to\mathcal{O}_{l_{0}}\to\mathcal{Q}_{2}\to\mathcal{O}_{l_{0}}(1)\to0.$$ and $$0\to\mathcal{O}_{l_{0}}(1)\to\mathcal{Q}|_{\mathcal{C}_{2}}\to\mathcal{O}_{l_{0}}(2)\to0,$$

\end{itemize}

\end{theorem}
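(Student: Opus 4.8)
The plan is to run the same machinery as in the proof of Theorem~\ref{cases:c=3}, but organised around the auxiliary primitive double curve $\mathcal{C}_2$ selected by the upper arrow of \eqref{filts}, so that the non-canonicity of the filtration is absorbed into the choice of $\mathcal{C}_2$. First I would fix the chain $l_0\subset\mathcal{C}_2\subset\mathcal{C}$ of $\mathbb{T}$-invariant subschemes and form the induced three-step filtration $0\subset\mathcal{Q}_3\subset\mathcal{Q}_2\subset\mathcal{Q}$, where $\mathcal{Q}_3=\ker(\mathcal{Q}\to\mathcal{Q}|_{\mathcal{C}_2})$ and $\mathcal{Q}_2=\ker(\mathcal{Q}\to\mathcal{Q}|_{l_0})$. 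Superposing the section sequence $0\to\mathcal{O}_{\mathcal{C}}\to\mathcal{Q}\to\mathcal{Z}_5\to0$ on the two restriction sequences of $\mathcal{O}_{\mathcal{C}}(-2)$ --- to $l_0$ and to $\mathcal{C}_2$ --- produces exactly the commutative diagrams \eqref{level:2-2} and \eqref{level:3-2}; here $\mathrm{length}(\mathcal{Z}_5)=5$ is forced by $\chi(\mathcal{Q}(m))=3m+6$ and $\chi(\mathcal{O}_{\mathcal{C}}(m))=3m+1$ (Theorem~\ref{multiple-class}).

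Next I would transport the rank $0$ instanton vanishing $\ho^0(\p3,\mathcal{Q}(-2))=\ho^1(\p3,\mathcal{Q}(-2))=0$ through the middle rows of the two diagrams. Taking cohomology yields, just as for the cascade \eqref{cascade:cond}, the vanishings $\ho^0(\mathcal{Q}_2(-2))=\ho^0(\mathcal{Q}_3(-2))=0$ and $\ho^1(\mathcal{Q}|_{l_0}(-2))=\ho^1(\mathcal{Q}|_{\mathcal{C}_2}(-2))=0$, together with the connecting isomorphisms $\ho^1(\mathcal{Q}_2(-2))\cong\ho^0(\mathcal{Q}|_{l_0}(-2))$ and $\ho^1(\mathcal{Q}_3(-2))\cong\ho^0(\mathcal{Q}|_{\mathcal{C}_2}(-2))$. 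Writing $\mathcal{Q}|_{l_0}=\mathcal{O}_{l_0}(a)\oplus T$ with $T$ of finite length, and $\mathcal{Q}_3=\mathcal{O}_{l_0}(b)$ (a line bundle on $l_0$, since $\mathcal{Q}$ is pure by Theorem~\ref{strict-torsion-free}), the vanishings $\ho^1(\mathcal{O}_{l_0}(a-2))=0$ and $\ho^0(\mathcal{O}_{l_0}(b-2))=0$ force $a\geq1$ and $b\leq1$.

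The enumeration is then closed by degree and length bookkeeping. Applying additivity of $\chi$ to the $(-2)$-twisted middle rows, together with $\chi(\mathcal{Q}(-2))=0$, converts the filtration into two linear relations among $a$, $b$, $\deg\mathcal{Q}_2$ and $\mathrm{length}(T)$. Treating the generalised rank $2$ sheaf $\mathcal{Q}|_{\mathcal{C}_2}$ on the primitive double $\mathcal{C}_2$ by Dr\'ezet's trichotomy exactly as in the proof of Theorem~\ref{type}, and discarding those sub-possibilities in which $\chi(\mathcal{Q}_2(-2))$ would contradict $\ho^0(\mathcal{Q}_2(-2))=0$, leaves precisely three admissible triples $(a,b,\mathrm{length}(T))$. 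These are the configurations (i), (ii) and (iii); the displayed restriction sequences for $\mathcal{Q}_2$ and $\mathcal{Q}|_{\mathcal{C}_2}$ in each case are read off the left and right columns of \eqref{level:3-2}, and the torsion sheaves $T_1$, $T_3$ acquire their stated lengths from that of $T$.

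The step I expect to be the main obstacle is exhaustiveness. Because $\mathcal{C}_2$ is a genuine choice --- unlike the canonical filtration available in the primitive situation of Theorem~\ref{cases:c=3} --- one must check that no admissible $\mathcal{Q}$ escapes the list for some choice of $\mathcal{C}_2$, and that the zero-dimensional cokernels $\bar{\mathcal{Z}},\tilde{\mathcal{Z}}$ of \eqref{level:2-2} and $\hat{\mathcal{Z}}_2,\check{\mathcal{Z}}_2$ of \eqref{level:3-2} have lengths that add up consistently to $5$ along both directions of the two diagrams. It is this compatibility that rigidifies the degrees $a,b$ and the torsion lengths into exactly the three listed configurations, and verifying it carefully --- rather than the cohomology propagation itself --- is where the real work lies.
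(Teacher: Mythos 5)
Your proposal is correct and is essentially the paper's own argument: the same choice of intermediate primitive double curve $\mathcal{C}_2$ given by the upper arrow of \eqref{filts}, the same restriction diagrams \eqref{level:2-2} and \eqref{level:3-2} with $\mathrm{length}(\mathcal{Z}_5)=5$, the same cascade of instanton vanishings as in \eqref{cascade:cond}, and the same Euler-characteristic and length bookkeeping to isolate the three configurations. The only difference is cosmetic: you index the case analysis by the triple $\bigl(a,b,\mathrm{length}(T)\bigr)$, whereas the paper indexes it by the length $\tilde{z}$ of the cokernel in the left column of \eqref{level:2-2}, ruling out $\tilde{z}=0,4,5$ and matching $\tilde{z}=1,2,3$ with cases (i)--(iii) --- an equivalent enumeration, with the same final reliance on the compatibility of lengths across both diagrams that you correctly single out as the crux.
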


\begin{proof}
The proof strategy is similar to that of Theorem \ref{cases:c=3}, by arguing on the length $\tilde{z},$ of $\tilde{\mathcal{Z}}$ in \eqref{level:2-2} ; first, note that $0\leq\tilde{z}\leq5.$ 
If $\tilde{z}=0,$ then $\mathcal{Q}_{2}(-2)=\mathcal{O}_{l_{0}}(-3)^{\oplus2}$ and $\chi(\mathcal{Q}|_{l_{0}}(-2))=4.$ Moreover, $\mathcal{Q}|_{l_{0}}(-2)=\mathcal{L}\oplus T.$  By putting $d=deg(\mathcal{L})$ and $t=$length$(T),$ one has $d+t=3.$ On the other hand, $\mathcal{Q}_{3}(-2)=\mathcal{O}_{l_{0}}(-3),$ and from the middle row of \eqref{level:3-2}, $\chi(\mathcal{Q}|_{\mathcal{C}_{2}}(-2))=2.$ Furthermore, $\mathcal{Q}|_{\mathcal{C}_{2}}(-2)$ fits into the restriction sequence
\begin{equation}\label{rest:Q:2} 
0\to\mathcal{L}(-1)\to\mathcal{Q}|_{\mathcal{C}_{2}}(-2)\to\mathcal{L}\oplus T\to0,
\end{equation}
 since $(\mathcal{Q}|_{\mathcal{C}_{2}})|_{l_{0}}=\mathcal{Q}|_{l_{0}}.$ But this implies that $\mathcal{L}=\mathcal{O}_{l_{0}}(-2)$ and length$(T)=5.$ which leads to $\ho^{1}(\mathcal{Q}_{2}(-2))=\mathbb{C}^{4}$ and $\ho^{0}(\mathcal{Q}|_{l_{0}}(-2))=\mathbb{C}^{5},$ contradicting \eqref{cascade:cond} $(c).$ Thus $\tilde{z}\neq0.$ In the same fashion, one proves that $\tilde{z}$ cannot be equal to $4,$ nor $5.$

For the rest of the cases $\tilde{z}=1,2,3$ one can first write $$0\to\mathcal{L}_{2}(-1)\to\mathcal{Q}_{2}(-2)\to\mathcal{L}_{2}\oplus T\to0,$$ from which one has $\mathcal{Q}_{3}=\mathcal{L}_{2}(-1)$ and  $\chi(\mathcal{Q}_{2}(-2))=2d_{2}+t+1$ and set $t=$length$(T)$ and $d_{2}=deg\mathcal{L}_{2}.$ Then, by using the left column of \eqref{level:2-2}, one has the following table: 

\begin{equation}\label{table:cases}
\begin{array}{|c|c|c|c|}
\hline
\tilde{z}&\chi(\mathcal{Q}_{2}(-2))&\bar{z}&\chi(\mathcal{Q}|_{l_{0}}(-2)) \\
\hline
1&-3&4&3 \\
\hline
2&-2&3& 2\\
\hline
3&-1&2& 1\\
\hline
\end{array}
\end{equation}
recall that $\bar{z}$ is the length of $\bar{\mathcal{Z}}$ in \eqref{level:2-2}. In what follows we analise the case in the first row of \eqref{table:cases}. The other cases can be treated similarly. 

When $\tilde{z}=1$ one has $2l_{2}+t+1=-3.$ Since the length $t\geq0,$ one has $d_{2}=-2$ and $t=0$ or $d_{2}=-3$ and $t=2$ or $d_{2}=-4$ and $t=4.$ However, the last two cases cannot hold since $\mathcal{Q}_{3}(-2)=\mathcal{L}_{2}(-1)$ would have degree less than $-3,$ contradicting the first row of \eqref{level:3-2}. Hence we end up with $d_{2}=-2,$ $t=0$ and $\mathcal{Q}_{3}(-2)=\mathcal{O}_{l_{0}}(-3).$ Now, writing $\mathcal{Q}|_{\mathcal{C}_{2}}(-2)$ as in \eqref{rest:Q:2} one should have $2d+t=1.$ Again, by using the fact that $\mathcal{Q}|_{l_{0}}(-2)=\mathcal{L}\oplus T,$ it turns out that the only possibility is $d=-1$ and $t=3.$ Hence 
$$0\to\mathcal{O}_{l_{0}}(-2)\to\mathcal{Q}|_{\mathcal{C}_{2}}(-2)\to\mathcal{O}_{l_{0}}(-1)\oplus T_{3}\to0,$$ and $$0\to\mathcal{O}_{l_{0}}(-3)\to\mathcal{Q}_{2}(-2)\to\mathcal{O}_{l_{0}}(-2)\to0.$$

\end{proof}

\begin{remark} \hspace{10cm}
\begin{itemize}
\item[(I)] When $\mathcal{C}$is primitive, the graded object ${\rm Gr}(\mathcal{Q}),$ associated to the canonical filtration of $\mathcal{Q},$ can be computed from Theorem \ref{cases:c=3}; in case ${\rm (i)}$ one has $${\rm Gr}(\mathcal{Q})=\mathcal{O}_{l_{0}}(2)\oplus\mathcal{O}_{l_{0}}(1)\oplus\mathcal{O}_{l_{0}}$$ hence $\mathcal{Q}$ is a generalised rank $3$ quasi locally free sheaf on $\mathcal{C},$ \cite[Corollary 5.1.4]{Drezet1}. In case ${\rm (ii)},$ we have $${\rm Gr}(\mathcal{Q})=\mathcal{O}_{l_{0}}(1)^{\oplus 2}\oplus\mathcal{O}_{l_{0}}\oplus T_{2},$$ and $T_{2}$ has length $1.$ Hence $\mathcal{Q}$ is, also in this case, a generalised rank $3$ sheaf on $\mathcal{C}.$
\item[(II)] For the non primitive case, one can also compute the graded object, with respect to the chosen filtration, from Theorem \ref{non-primitive}, namely; 
\begin{itemize}
\item[(i)] ${\rm Gr}(\mathcal{Q})=\mathcal{O}_{l_{0}}(-1)\oplus\mathcal{O}_{l_{0}}\oplus\mathcal{O}_{l_{0}}(1)\oplus T_{3},$  and $T_{3}$ has length $3,$ or  

\item[(ii)] ${\rm Gr}(\mathcal{Q})=\mathcal{O}_{l_{0}}(2)\oplus\mathcal{O}_{l_{0}}\oplus\mathcal{O}_{l_{0}}(-1)\oplus T_{1}^{\oplus 2},$ and $T_{1}$ has length $1,$ or  

\item[(iii)] ${\rm Gr}(\mathcal{Q})=\mathcal{O}_{l_{0}}(2)\oplus\mathcal{O}_{l_{0}}(1)\oplus\mathcal{O}_{l_{0}}$ 
 \end{itemize}
\item[(III)] Theorems \ref{cases:c=3} and \ref{non-primitive} show that there are at least $7$ components in $\mathcal{M}^{\mathbb{T}}_{\mathbb{P}^{3}}(3);$ $3$ non primitive cases and $4$ primitive ones, counting, both, the horizontal and the vertical Young Diagrams. 
\end{itemize}

\end{remark}

\vspace{0.3cm}

For a given integer $m,$ we now denote by ${\rm p}(m)$ the number of its ($2-$dimensional) partitions. We recall that $l_{\mathcal{Z}}(\nu(c))$ denotes the length of $\mathcal{Z},$ for the multiple structure associated to a partition $\nu(c)$ of $c.$

If we consider the whole set of monomial multiple structures, not only the primitive ones, then we get the following
 
\vspace{0.5cm}

\begin{lemma}\label{minimum-components}
The fixed locus $\mathcal{M}^{\mathbb{T}}_{\mathbb{P}^{3}}(c)$ splits as a union
$$\mathcal{M}^{\mathbb{T}}_{\mathbb{P}^{3}}(c)={\bf \bigcup_{\nu(c)}\mathcal{M}(\nu(c))}.$$
Thus the least number of such irreducible components in $\mathcal{M}^{\mathbb{T}}_{\mathbb{P}^{3}}(c)$ is given by the number ${\rm p}(c),$ of partitions $\nu(c),$ of $c,$ and can be expressed generating function $$\sum_{c=0}^{\infty} {\rm p}(c)x^{c}=\prod_{k=1}^{\infty}\frac{1}{1-x^{k}}.$$
\end{lemma}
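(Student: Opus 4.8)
The plan is to stratify $\mathcal{M}^{\mathbb{T}}_{\p3}(c)$ by the combinatorial type of the support of the associated pair, and then to turn the resulting decomposition into a lower bound on the number of irreducible components. First I would invoke Theorem \ref{multiple-class} together with \cite[Proposition 1]{Vatne}: to every $\mathbb{T}$-fixed point $[E]\in\mathcal{M}^{\mathbb{T}}_{\p3}(c)$ one attaches the pair $(\mathcal{Q},s)$ with $\mathcal{Q}=\mathcal{O}^{2}_{\p3}/E$, whose schematic support $\mathcal{C}$ is a Cohen--Macaulay monomial multiple structure on the line $l_{0}$, and such structures correspond bijectively to Young diagrams of weight $c$, i.e. to partitions $\nu(c)=(\nu_{1}\geq\cdots\geq\nu_{k})$ of $c$. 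Setting $\mathcal{M}(\nu(c))$ to be the locus of those $[E]$ for which this partition equals $\nu(c)$, the fibres of $[E]\mapsto\nu(c)$ give the asserted equality $\mathcal{M}^{\mathbb{T}}_{\p3}(c)=\bigcup_{\nu(c)}\mathcal{M}(\nu(c))$, and the strata attached to distinct partitions are disjoint.

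Next I would show that each stratum $\mathcal{M}(\nu(c))$ is open and closed. Since $\mathcal{M}^{\mathbb{T}}_{\p3}(c)$ is an open subscheme of a Quot scheme, it carries a universal flat quotient $\mathcal{Q}$; on the fixed locus this family is $\mathbb{T}$-equivariant, so its equivariant Hilbert function is locally constant in the base. As the support $\mathcal{C}$ of a fixed pair is itself $\mathbb{T}$-invariant (Theorem \ref{strict-torsion-free}) and hence monomial, and as the $\mathbb{T}$-fixed subschemes of $\p3$ supported on $l_{0}$ with fixed Hilbert polynomial form a finite, discrete set, the partition $\nu(c)$ read off from these torus weights cannot vary in a connected family. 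Consequently $[E]\mapsto\nu(c)$ is locally constant, each $\mathcal{M}(\nu(c))$ is a union of connected components, and in particular it is clopen.

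I would then verify that every stratum is nonempty. Given a partition $\nu(c)$, the corresponding monomial ideal $\mathcal{I}_{\mathcal{C}}$ defines a $\mathbb{T}$-fixed Cohen--Macaulay curve $\mathcal{C}$ of multiplicity $c$; modifying $\mathcal{O}_{\mathcal{C}}$ by a length-$l_{\mathcal{Z}}$ scheme of $\mathbb{T}$-fixed points concentrated at $p_{0}$ and $p_{1}$ --- exactly as in the explicit analysis of the cases $c=1,2,3$ carried out above --- produces a pure $1$-dimensional $\mathbb{T}$-fixed rank $0$ instanton sheaf $\mathcal{Q}$ with $\chi(\mathcal{Q}(m))=cm+2c$, and $E:=\ker(\mathcal{O}^{2}_{\p3}\twoheadrightarrow\mathcal{Q})$ is then a $\mathbb{T}$-fixed framed instanton of charge $c$ with support of type $\nu(c)$, so $\mathcal{M}(\nu(c))\neq\varnothing$. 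Since each stratum is clopen and nonempty it contains at least one irreducible component of $\mathcal{M}^{\mathbb{T}}_{\p3}(c)$, and as distinct partitions give disjoint strata these components are pairwise distinct; hence the number of irreducible components is at least ${\rm p}(c)$. Finally, the generating function is Euler's classical identity: grouping a partition by the multiplicity $m_{k}$ of its part equal to $k$ gives $\sum_{c\geq0}{\rm p}(c)x^{c}=\prod_{k\geq1}\sum_{m\geq0}x^{km}=\prod_{k\geq1}(1-x^{k})^{-1}$.

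The main obstacle, I expect, is not the formal decomposition but the two inputs that make the bound genuine: the nonemptiness of every stratum (one must really exhibit, for each partition, a $\mathbb{T}$-fixed framed instanton with support of that exact monomial type, and check the instanton cohomology vanishings $\ho^{0}(\p3,\mathcal{Q}(-2))=\ho^{1}(\p3,\mathcal{Q}(-2))=0$), and the separation statement that distinct partitions land in distinct components. The latter ultimately rests on the rigidity of monomial multiple structures --- that they are isolated among $\mathbb{T}$-fixed subschemes --- so that the partition is a true deformation invariant of the fixed family and not merely a pointwise label; making this local constancy precise (flatness of the support, or equivalently local constancy of the equivariant Hilbert series together with its determination of $\nu(c)$) is where the real work lies.
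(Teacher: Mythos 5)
Your proposal is correct and follows essentially the same route as the paper: the paper's own proof of Lemma \ref{minimum-components} is a one-liner that enumerates the Young diagrams (equivalently, partitions of $c$) classifying the monomial supports via Theorem \ref{multiple-class}, which is exactly the stratification you set up, combined with Euler's product formula. The two points you single out as ``the real work'' --- that the strata are clopen (so distinct partitions yield distinct components) and that every stratum is nonempty --- are left implicit in the paper, whose disconnectedness claim appears only as an unproved remark after the lemma, so your write-up is, if anything, more careful than the original.
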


\begin{proof}
This is obtained by enumerating the possible Young diagrams, hence enumerating partitions $\nu(c)$ of $c.$ 
\end{proof}
 
We remark that these are completely disconnected components. But as seen, in the case $c=3,$ there might be more than one component for the same partition.

\vspace{0.5cm}

\subsubsection{Stable rank $0$ instanton pair of charge $c$ with primitive support}

We now describe the case in which the support is a primitive multiple line. For the pair $(\mathcal{Q},s)$ of charge $c,$ the associated Cohen-Macaulay curve $\mathcal{C}$ is a primitive multiple curve with ideal whom associated Young digram is a column or a line. As in the last section we treat the case \ytableausetup{mathmode}
\ytableausetup{mathmode}
\begin{ytableau}
\quad & \quad &  \quad & \quad &  \quad & \quad & \none
\end{ytableau}.

\vspace{0.5cm}

This time we have $\mathcal{I}_{\mathcal{C}}=<z_{2},z_{3}^{c}>;$ also, for which $\mathcal{C}$ is a complete intersection. Its resolution is $$0 \to\mathcal{O}_{\p3}(-c-1)\to \mathcal{O}_{\p3}(-c)\oplus \mathcal{O}_{\p3}(-1)\to \mathcal{I}_{\mathcal{C}}\to 0,$$ with Hilbert polynomial $\chi(\mathcal{O}_{\mathcal{C}}(m))=cm-\frac{c(c-3)}{2},$ and length $l_{\mathcal{Z}}=\frac{c(c+1)}{2}.$ 

\vspace{0.5cm}

The canonical filtration of supports is represented by:

\vspace{1cm}

$\hspace{1.4cm}\mathcal{I}_{l_{0}} \hspace{0.7cm}\supset \mathcal{I}_{\mathcal{C}_{2}}\hspace{0.4cm} \cdots \hspace{1cm}\supset\mathcal{I}_{\mathcal{C}_{c-1}} \hspace{1.4cm}\supset \mathcal{I}_{\mathcal{C}}$
\begin{center}
\ytableausetup{mathmode}
\begin{ytableau}
\quad &  \none
\end{ytableau} $\subset$
\ytableausetup{mathmode}
\begin{ytableau}
\quad & \quad
\end{ytableau} $\subset\cdots\subset$ 
\ytableausetup{mathmode}
\begin{ytableau}
\quad & \quad &  \quad & \quad &  \quad 
\end{ytableau} $\subset$
\ytableausetup{mathmode}
\begin{ytableau}
\quad & \quad &  \quad & \quad &  \quad & \quad & \none
\end{ytableau} 
\end{center}
$\hspace{2cm}l_{0}\subset \hspace{0.7cm} \mathcal{C}_{2}\subset\hspace{0.4cm} \cdots \hspace{1cm}\mathcal{C}_{c-1} \subset \hspace{1.8cm} \mathcal{C}$

\vspace{0.5cm}

and we have sequences:

$$0\to\mathcal{I}_{\mathcal{C}_{2}}\to\mathcal{I}_{l_{0}}\to L\cong\mathcal{O}_{l_{0}}(-1)\to0$$
and 
$$ 0\to\mathcal{I}_{\mathcal{C}_{i+1}}\to\mathcal{I}_{\mathcal{C}_{i}}\to L\cong\mathcal{O}_{l_{0}}(-1)^{\otimes i}\to0,$$ hence restrictions sequence $$0\to\mathcal{O}_{l_{0}}(-1)\to\mathcal{O}_{\mathcal{C}}\to \mathcal{O}_{l_{0}}\to0;$$

$$0\to\mathcal{O}_{l_{0}}(-i)\to\mathcal{O}_{\mathcal{C}_{i+1}}\to \mathcal{O}_{\mathcal{C}_{i}}\to0,$$

For $1 \leq i\leq c-1.$

On the other hand, the first canonical filtration of $\mathcal{O}_{\mathcal{C}}$ reads as $$\mathcal{L}_{c+1}=0\subset\mathcal{L}_{c}\subset \cdots\subset\mathcal{L}_{2}\subset\mathcal{O}_{\mathcal{C}},$$
where $\mathcal{O}_{\mathcal{C}}/\mathcal{L}_{i+1}\cong\mathcal{O}_{\mathcal{C}_{i}}.$
\begin{lemma}
The graded sheaf, the generalised degree and the generalised rank of $\mathcal{O}_{\mathcal{C}}$ are given, respectively, by:
$${\rm Gr}(\mathcal{O}_{\mathcal{C}})=\bigoplus_{i=0}^{c-1}\mathcal{O}_{l_{0}}(-i), \hspace{0.5cm} Deg(\mathcal{O}_{\mathcal{C}})=-\frac{c(c-1)}{2} \hspace{0.3cm}\textnormal{ and }\hspace{0.3cm} R(\mathcal{O}_{\mathcal{C}})= c.$$
\end{lemma}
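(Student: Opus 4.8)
The plan is to read off all three quantities directly from the graded object ${\rm Gr}(\mathcal{O}_{\mathcal{C}})=\bigoplus_{i=1}^{c}\mathcal{L}_{i}/\mathcal{L}_{i+1}$ attached to the first canonical filtration, so that the entire computation reduces to identifying each successive quotient $\mathcal{L}_{i}/\mathcal{L}_{i+1}$ as a line bundle on $l_{0}$. First I would use the isomorphisms $\mathcal{O}_{\mathcal{C}}/\mathcal{L}_{i+1}\cong\mathcal{O}_{\mathcal{C}_{i}}$ recorded above: for each $i$ these yield a surjection $\mathcal{O}_{\mathcal{C}_{i}}=\mathcal{O}_{\mathcal{C}}/\mathcal{L}_{i+1}\twoheadrightarrow\mathcal{O}_{\mathcal{C}}/\mathcal{L}_{i}=\mathcal{O}_{\mathcal{C}_{i-1}}$ whose kernel is exactly $\mathcal{L}_{i}/\mathcal{L}_{i+1}$. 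Comparing this with the restriction sequence displayed above, taken with $i$ replaced by $i-1$, namely $0\to\mathcal{O}_{l_{0}}(-(i-1))\to\mathcal{O}_{\mathcal{C}_{i}}\to\mathcal{O}_{\mathcal{C}_{i-1}}\to0$, identifies the kernel and gives $\mathcal{L}_{i}/\mathcal{L}_{i+1}\cong\mathcal{O}_{l_{0}}(-(i-1))$ for all $1\leq i\leq c$ (the top case $i=1$ reading $\mathcal{L}_{1}/\mathcal{L}_{2}=\mathcal{O}_{\mathcal{C}}/\mathcal{L}_{2}=\mathcal{O}_{\mathcal{C}_{1}}=\mathcal{O}_{l_{0}}$, consistent with weight $0$).

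Summing over $i=1,\dots,c$ then yields ${\rm Gr}(\mathcal{O}_{\mathcal{C}})=\bigoplus_{j=0}^{c-1}\mathcal{O}_{l_{0}}(-j)$, which is the first assertion. The generalised rank and degree are, by definition, the ordinary rank and degree of this $\mathcal{O}_{l_{0}}$-module; since each $\mathcal{O}_{l_{0}}(-j)$ is a line bundle on $l_{0}\cong\mathbb{P}^{1}$, I obtain $R(\mathcal{O}_{\mathcal{C}})=c$ and $Deg(\mathcal{O}_{\mathcal{C}})=\sum_{j=0}^{c-1}(-j)=-\tfrac{c(c-1)}{2}$ at once.

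The main obstacle is purely bookkeeping: one must make sure the indexing of the canonical filtration $\{\mathcal{L}_{i}\}$ is correctly aligned with that of the support filtration $\{\mathcal{C}_{i}\}$, so that the graded pieces come out as $\mathcal{O}_{l_{0}}(0),\mathcal{O}_{l_{0}}(-1),\dots,\mathcal{O}_{l_{0}}(-(c-1))$ and not shifted by one. To guard against an off-by-one error I would cross-check the outcome against the generalised Riemann--Roch theorem (Theorem \ref{Euler}): substituting $R(\mathcal{O}_{\mathcal{C}})=c$ and $Deg(\mathcal{O}_{\mathcal{C}})=-\tfrac{c(c-1)}{2}$ into $\chi(\mathcal{O}_{\mathcal{C}})=Deg(\mathcal{O}_{\mathcal{C}})+R(\mathcal{O}_{\mathcal{C}})(1-g_{Y})$ with $g_{Y}=0$ (the genus of the reduced line $l_{0}\cong\mathbb{P}^{1}$) returns $\chi(\mathcal{O}_{\mathcal{C}})=-\tfrac{c(c-1)}{2}+c=-\tfrac{c(c-3)}{2}$, which matches the value at $m=0$ of the Hilbert polynomial $\chi(\mathcal{O}_{\mathcal{C}}(m))=cm-\tfrac{c(c-3)}{2}$ computed above. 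This agreement simultaneously confirms the identification of the graded pieces and the stated value of $Deg(\mathcal{O}_{\mathcal{C}})$.
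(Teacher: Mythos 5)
Your proof is correct and takes essentially the same approach as the paper: the paper's proof consists of a $3\times 3$ commutative diagram identifying $\mathcal{L}_{i}/\mathcal{L}_{i+1}$ with the kernel $\mathcal{O}_{l_{0}}(-i+1)$ of the restriction map $\mathcal{O}_{\mathcal{C}_{i}}\to\mathcal{O}_{\mathcal{C}_{i-1}}$, which is exactly your comparison of the filtration quotients with the restriction sequences, after which the rank and degree are read off from the definitions. Your Riemann--Roch cross-check (with genus $0$ for the reduced line) is a harmless addition; the paper runs that same computation in the opposite direction, using the lemma to recover the Hilbert polynomial $\chi(\mathcal{O}_{\mathcal{C}}(m))=cm-\frac{c(c-3)}{2}$.
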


\begin{proof}
By using diagrams 

\begin{displaymath}
\xymatrix@R-1pc@C-1pc{
 & 0 \ar[d]& 0 \ar[d] & &\\
& \mathcal{L}_{i+1}\ar@{=}[r]\ar[d] & \mathcal{L}_{i+1}\ar[d]& & \\
0\ar[r] & \mathcal{L}_{i}\ar[r]\ar[d] & \mathcal{O}_{\mathcal{C}}
 \ar[d]\ar[r]& \mathcal{O}_{\mathcal{C}_{i-1}} \ar[r]\ar@{=}[d]&0 \\
0\ar[r]& \mathcal{O}_{l_{0}}(-i+1)\ar[d]\ar[r]
& \mathcal{O}_{\mathcal{C}_{i}}\ar[r] \ar[d]&\mathcal{O}_{\mathcal{C}_{i-1}}\ar[r]&0 \\
&0&0&&
}
\end{displaymath}
 one gets the graded sheaf. The generalised degree and rank follow easily by applying their definitions.
\end{proof}

By Theorem \ref{Euler} one gets $\chi(\mathcal{O}_{\mathcal{C}}(m))=cm-\frac{c(c-3)}{2}.$ Thus the graded sheaf associated to $\mathcal{O}_{\mathcal{C}}$  is ${\rm Gr}(\mathcal{O}_{\mathcal{C}})=\bigoplus_{i=0}^{c-1}\mathcal{O}_{l_{0}}(-i),$  and the generalised rank and degree are, respectively, $R(\mathcal{O}_{\mathcal{C}})=c,$ $Deg(\mathcal{O}_{\mathcal{C}})=-\frac{c(c-1)}{2}.$

\vspace{0.4cm}

We remark that we do not know whether the above fixed components intersect the closure of the framed locally free instanton moduli, in general. We think that this problem is related to {\em reachability} of sheaves, on multiple structure \cite{Drezet3}. Nevertheless, for charge $c=1,$ the answer is positive; the sheaf $\ker(\mathcal{O}_{\p3}^{\oplus 2}\to\mathcal{Q}),$ in \ref{charge1}, is in the closure of the moduli of locally free framed instanton bundles \cite[\S 6]{JMT2}. Furthermore, if $c=2$ one can deform the (monomial) double curve into a union of two curves intersecting at a point. Moreover, the moduli space of instantons of charge $c=2$ is irreducible as proved in \cite[Propositioln. 7]{JMT}. Thus the $0$ instanton sheaf should deform, from sheaf on the double curve, to a sheaf on the reduced curve. Hence, the fixed component is in the closure of the moduli space of framed instantons. For higher values of the charge, this is a difficult problem to answer. Since we think this is true, we close this notes by writing

\begin{conjecture}
The fixed components, under the lifted toric action on $\p3,$ intersect the closure of the locally free component in the moduli space of framed instantons.
\end{conjecture}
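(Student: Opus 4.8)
The plan is to realize each fixed component as a flat limit of \emph{locally free} framed instantons, and it is cleanest to argue on the ADHM side. By the lemma preceding Theorem~\ref{Tfixed-tfree} a $\mathbb{T}$-fixed datum has $J_{0}=J_{1}=0$, so by that same lemma it is never globally weak costable; by Theorem~\ref{sheaf-stability} this is precisely why $E$ is strictly torsion-free rather than a bundle. The conjecture is therefore equivalent to producing, for each fixed $X=(A_{i},B_{i},I_{i},0)$ solving the reduced equations \eqref{equa-fixed}, a one-parameter family $X_{t}=(A_{i}(t),B_{i}(t),I_{i}(t),J_{i}(t))$ inside $\mathcal{V}^{\rm gws}_{\p3}(c,2)$ that solves the full ADHM equations \eqref{equcad} for all $t$, with $X_{0}=X$ and $X_{t}$ globally weak costable for $t\neq 0$. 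Such a family places $[E]$ in the closure of the locally free (globally weak costable) locus; and since $\mathcal{M}_{\mathbb{P}^{3}}(c)\to\overline{\mathcal{I}}(c)$ is an $\mathrm{SL}(2,\mathbb{C})$-bundle, the framed statement reduces to the unframed one.

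First I would try to switch on $J$. The obstruction is immediate: keeping $A_{i},B_{i},I_{i}$ fixed and setting $J_{0}(t)\neq 0$ forces $I_{0}J_{0}(t)=0$ in the first relation of \eqref{equcad}, which is incompatible with injective $I_{0}$; hence $A_{i}$ and $B_{i}$ must be deformed simultaneously so as to absorb the new term $I_{i}J_{i}$ in the commutators. The strategy is thus to linearize: at $X$ one exhibits a first-order deformation $\dot X$ with $\dot J\neq 0$ solving the linearized ADHM equations and pointing \emph{out of} the $J=0$ locus, and then integrates it. Because the locally free moduli is smooth by \cite{JV} and irreducible by \cite{Tikho1,Tikho2}, the globally weak costable locus $\mathcal{V}^{\rm gwc}_{\p3}(c,2)$ is a single smooth irreducible component of the open part of $\mathcal{V}^{\rm gws}_{\p3}(c,2)$; the content of the conjecture is that its closure engulfs the fixed $J=0$ stratum.

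It is illuminating to run the same argument on the sheaf side, where it becomes Dr\'ezet's \emph{reachability} problem \cite{Drezet3}. Here one deforms the Cohen--Macaulay support $\mathcal{C}$ --- a monomial multiple structure on $l_{0}$, classified in Theorem~\ref{multiple-class} --- flatly to a reduced curve (a nodal or disjoint union of lines), and simultaneously deforms the rank $0$ instanton sheaf $\mathcal{Q}$ to a pure sheaf $\mathcal{Q}'$ on the smoothed curve, so that the kernel $E'=\ker(\mathcal{O}^{\oplus 2}_{\p3}\to\mathcal{Q}')$ becomes locally free. The canonical filtrations computed in Theorems~\ref{cases:c=3} and~\ref{non-primitive} supply the inductive engine: the graded pieces of $\mathcal{Q}$ are line bundles (plus finite torsion) on $l_{0}$, which are individually reachable, and one would lift reachability from $\mathrm{Gr}(\mathcal{Q})$ to $\mathcal{Q}$ along the filtration. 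The base cases $c=1,2$ are exactly those settled in the preceding discussion, the case $c=2$ using the irreducibility of \cite[Prop.~7]{JMT} to smooth the double line into two incident lines.

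The main obstacle is uniform control of the obstruction space governing these simultaneous deformations --- equivalently $\ext^{2}$ of the pair $(\mathcal{Q},s)$ in $D^{b}(\p3)$, or the second-order ADHM obstruction encountered when $J$ is switched on. For $c=1$ this obstruction does not jump, as computed in \S\ref{charge1}, which is why the smoothing there is unobstructed; for general $c$ one must show that these classes either vanish or can be annihilated along a judicious deformation direction compatible with \eqref{equcad}. Proving this uniformly in $c$ is tantamount to establishing irreducibility of the torsion-free--compactified instanton moduli with the locally free locus dense, and it is precisely this step that I expect to be the crux and that keeps the statement conjectural.
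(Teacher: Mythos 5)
The statement you are addressing is a \emph{conjecture}: the paper offers no proof of it, only heuristic evidence in the closing paragraphs of Section \ref{sheaves-multi-struc} --- namely that for $c=1$ the kernel $\ker(\mathcal{O}_{\p3}^{\oplus 2}\to\mathcal{Q})$ lies in the closure of the locally free moduli by \cite[\S 6]{JMT2}, that for $c=2$ one can deform the monomial double curve into two incident lines and invoke the irreducibility result of \cite{JMT}, and that the general case should be governed by Dr\'ezet's reachability theory \cite{Drezet3}. Your proposal reproduces exactly this evidence (same base cases, same references, same reduction to reachability along the canonical filtrations), supplemented by a parallel ADHM-side formulation in which one tries to switch on $J$ while deforming $A_{i},B_{i},I_{i}$ to preserve \eqref{equcad}. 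As a research program this is sensible and faithful to the paper's own point of view.

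However, as a proof it has a genuine gap, and you concede it yourself in the final paragraph: the step you call the crux --- uniform control of the obstruction classes (equivalently, integrability of a first-order deformation with $\dot J\neq 0$, equivalently the statement that the closure of the globally weak costable locus engulfs the $J=0$ fixed strata) --- \emph{is} the conjecture. Exhibiting a tangent direction out of the $J=0$ locus that solves the linearized equations does not suffice; integrating it requires precisely the unobstructedness you cannot establish, and nothing in the paper (smoothness of the locally free locus \cite{JV}, irreducibility \cite{Tikho1,Tikho2}) controls deformations \emph{at} the non-locally-free fixed points, where the moduli may be singular or reducible. Note also that your closing claim slightly overstates what is needed: the conjecture asks only that each fixed component meet the closure of the locally free locus, which is weaker than density of that locus in the whole compactified moduli; conflating the two makes the problem look harder than it is, but does not make your argument complete. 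In short, your proposal is a correct identification of the difficulty, consistent with the paper's discussion, but neither you nor the paper proves the statement --- it remains open.
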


\vspace{0.3cm}

\paragraph{{\bf Acknowledments}}

I would like to thank Marcos Jardim for the useful discussions we had during my few shorts visits to IMECC-UNICAMP and his valuable remarks about the first draft. I am also thankful to the referees of the \emph{Pacific Journal of Mathematics}, for their corrections and suggestions.

\vspace{0.5cm}



\vspace{0.2cm}

{\small

Abdelmoubine Amar Henni   

Departamento de Matem\'atica MTM - UFSC

Campus Universit\'ario Trindade CEP 88.040-900 Florian\'opolis-SC, Brazil

e-mail: henni.amar@ufsc.br   

}

\end{document}